\newtheorem{theorem}{Theorem}[section]
\newtheorem{corollary}{Corollary}[section]
\newtheorem{lemma}{Lemma}[section]
\newtheorem{proposition}{Proposition}[section]
\newtheorem{remark}{Remark}[section]
\newcommand{\Huz}{H^{1}_{0}\left(\Omega\right)}
\newcommand{\Hd}{H^{2}\left(\Omega\right)}
\newcommand{\keywords}[1]{\small\textbf{Keywords: }#1}
\newcommand{\AMSsubj}[1]{\textbf{AMS subject classifications: }#1}     {\endlist}
\title{Bilateral obstacle optimal control Problem}
\author{\textsc{R. Ghanem$^{\dag \ddag}$, B. Zireg$^\ddag$}  \\
{\footnotesize  {$^{\dag \ddag}$ Numerical analysis, optimization and statistical laboratory (LANOS)}}\\
{\footnotesize {Badji-Mokhtar, Annaba University}}\\
{\footnotesize P.O. Box 12, 23000, Annaba Algeria}\\
{\footnotesize{ $^\dag $ radouen.ghanem@univ-annaba.org }}}
\date{}
\begin{document}
\maketitle

 \begin{abstract}
In this work we consider the numerical resolution of the bilateral
obstacle optimal control problem given in Bergounioux et al \cite{BergLenhBi}. Where
the main feature of this problem
is that the control and the obstacle are the same.%
\end{abstract}

\keywords Optimal control, obstacle problem, finite differences
method.%

\AMSsubj    65K15   49K10, 35J87 , 49J20 , 65L12 , 49M15.%

\section{ Introduction}
 Variational inequalities and related
optimal control problems have been recognized as suitable
mathematical models for dealing with many problems arising in
different fields, such as shape optimization theory, image
processing and mechanics, (see for example \cite{BergouShape},
\cite{BergouImage}, \cite{DesongMech}, \cite{lionstamp},
\cite{Rodri}).

Optimal control problem governed by variational inequalities has
been studied extensively during the last years by many authors,
such as \cite{Bar1}, \cite{MignPuel}, \cite{Mign}. These authors
have studied optimal control problems for obstacle problems (or
variational inequalities) where the obstacle is a known function,
and the control variables appear as variational inequalities. In
other words, controls do not change the obstacle and, on the other
hand , in \cite{AdamsLenhart}, \cite{BergLenhBi},
\cite{Ghanemlivre} the authors have studied another class of
problems where the obstacle functions are unknowns and are
considered as control functions.

In this paper, we investigate optimal control problems governed by
variational inequalities of obstacle type. This kind of problem is
very important and it can lead to the shape optimization problem
governed by variational inequality, it may concern the optimal shape of dam \cite{Chipotlivre}, for which the obstacle
gives the shape to be designed such that the pressure of the fluid
inside the dam is close to a desired value. Besides, if we want to
design a membrane having an expected shape, we need to choose a
suitable obstacle. In this case, the obstacle can be considered as
a control, and the membrane as the state (see for example
\cite{HlavacekBeam}).

It should be pointed out that, in the optimal control problem of a
variational
inequality, the main difficulty comes from the fact that the mapping $%
\mathcal{T}$  between the control and the state (control-to-state
operator) is not Gateaux differentiable as pointed it out in
\cite{Mign},  \cite{MignPuel} where one can only define a conical
derivative for $\mathcal{T}$  but only Lipschitz-continuous and so
it is not easy to get optimality conditions that can be
numerically exploitable.

To overcome this difficulty, different authors (see for example,
Kunisch et al.\cite{Kunish}  V. Barbu \cite{Bar1} and the
references therein) consider a Moreau-Yosida approximation
technique to reformulate the governing variational inequality
problem into a problem governed by a variational equation. Our
approach is based on the penalty method and Barbu's treatment as a
penalty parameter approaching zero. We then obtain a system of
optimality for suitable approximations of the original problem
which can be easily used from the numerical point of view.

Nevertheless, the optimal control of variational inequalities of
obstacle type is still a very active field of research especially
for their numerical treatment which are given in the recent
publication \cite{Ghanemzireg}.

The problem that we are going to study can be set in a wider class
of problems, which can be formally described as follows

\begin{equation*}\label{p0}
\min \left\{ J\left( y,\chi \right) ,y=\mathcal{T}\left( \chi
\right) ,\chi \in \mathcal{U}_{ad}\subset \mathcal{U}\right\}
\end{equation*}

where $\mathcal{T}$ is an operator which associates $y$ to $\chi
$, when $y$ is a solution to

\begin{equation}\label{obs}
\forall y\in \mathcal{K}\left( y,\chi \right) ,\left\langle
A\left( y,\chi \right) ,y-v\right\rangle \geq 0 \tag{$obs$},
\end{equation}

where $K$ is a multiplication from $\chi \times \mathcal{U}$ to
$2^{\chi }$ when $\chi $ is a Banach space and $A$ is a
differential operator from $Y$ to the dual $Y^{\prime }$. Let $h$
be an application from $\mathbb{R}\times \mathbb{R}$ to
$\mathbb{R}$, then the variational inequality that relates the
control $\chi $ to the state $y$ can be written as

\begin{equation*}
\left\langle A\left( y,\chi \right) ,y-v\right\rangle
_{Y,Y^{\prime }}+h\left( \chi ,v\right) -h\left( \chi ,y\right)
\geq \left( \chi ,v-y\right) ,\forall y\in \mathcal{Y},
\end{equation*}

where this formulation gives the obstacle problems where the
obstacle is the control.

Following the previous ideas, we may apply a smoothed penalization
approach to our problem. More precisely, the idea is to
approximates the obstacle problem
 by introducing an approximating parameter $\delta
$, where the approximating method is based on the penalization
method and it consists in replacing the obstacle problem
(\eqref{obs} by a family of semilinear equations. In
\cite{BergLenhBi}, Bergounioux et al. considered the following
bilateral optimal control obstacle problem

\begin{multline}
\min \{ J\left( \varphi ,\psi \right) =\frac{1}{2}\int\nolimits_{%
\Omega }\left( \mathcal{T}\left( \varphi ,\psi \right) -z\right) ^{2}dx+%
\frac{\nu }{2}\int\nolimits_{\Omega }\left( \left( \Delta \varphi
\right) ^{2}+\left( \Delta \psi \right) ^{2}\right) dx,\\
 \left(
\varphi ,\psi \right) \in \mathcal{U}_{ad}\times
\mathcal{U}_{ad}\}
\end{multline}

where $\nu$ is a given positive constant and $z$ belongs to
$L^{2}\left( \Omega \right) $ as a target profile,

such that $y=\mathcal{T}\left( \varphi \right)$ is a solution of
the bilateral obstacle problem given by

\begin{equation*}
\left\langle Ay,v-y\right\rangle \geq \left( f,v-y\right) ,\text{
for all } v\text{ in } \mathcal{K}\left( \varphi ,\psi \right),
\end{equation*}

where $\mathcal{K}\left( \varphi,\psi \right)$ is given by

\begin{equation*}
\mathcal{K}\left( \varphi,\psi \right) =\left\{ y\in
H_{0}^{1}\left( \Omega \right) ,\psi \geq y\geq \varphi \right\},
\end{equation*}

and the set of admissible controls $\mathcal{U}_{ad}$ is defined
as follows

\begin{equation*}
\mathcal{U}_{ad}=\left\{ \left( \varphi ,\psi \right) \in
\mathcal{U} \times \mathcal{U} \mid \varphi \leq \psi \right\},
\end{equation*}

where $\mathcal{U}= H^{2}\left( \Omega \right) \times
H_{0}^{1}\left( \Omega \right) $. As we need $H^{2}-$priori
estimate, \ we could assume that $\mathcal{U}_{ad}$ is $H^{2} $
bounded. For example, we can suppose that $\mathcal{U}_{ad}$ \ is
$\mathcal{B}_{H^{2}}\left( 0,R\right)$ i.e. a ball of center $0$
and radius $R$, where $R$ is a large enough positive real number,
but according to \cite{Ghanemzireg}, this choice can lead to
technical difficulties to get a numerical solution of the
optimality system.

In \cite{Ghanemzireg}, Ghanem et al., have solved numerically the
unilateral optimal control of obstacle problem given by

\begin{equation}
\label{cont_pb_lapla} \min \left\{ J\left( \varphi \right)
=\frac{1}{2}\int\nolimits_{\Omega
}\left( \mathcal{T}\left( \varphi \right) -z\right) ^{2}dx+\frac{\nu }{2}%
\int\nolimits_{\Omega }\left( \Delta \varphi \right)
^{2}dx,\varphi \in \mathcal{U}\right\}
\end{equation}

instead of the one defined by

\begin{equation*}
\min \left\{ J\left( \varphi \right)
=\frac{1}{2}\int\nolimits_{\Omega
}\left( \mathcal{T}\left( \varphi \right) -z\right) ^{2}dx+\frac{\nu }{2}%
\int\nolimits_{\Omega }\left( \nabla \varphi \right)
^{2}dx,\varphi \in \mathcal{U}_{ad}\right\}
\end{equation*}

where

\begin{equation}
\label{Radius_R} \mathcal{U}_{ad}=\left\{ \varphi \in H^{2}\left(
\Omega \right) ,\varphi \in \mathcal{B}_{H^{2}}\left( 0,R\right)
\right\}
\end{equation}

such that $y=\mathcal{T}\left( \varphi \right) $ is a solution of
the unilateral obstacle problem given by

\begin{equation*}
\left\langle Ay,v-y\right\rangle \geq \left( f,v-y\right) ,\text{
for all } v\text{ in }\mathcal{K}\left( \varphi \right)
\end{equation*}

where $\mathcal{K}\left( \varphi \right) $ is defined by

\begin{equation*}
\mathcal{K}\left( \varphi \right) =\left\{ y\in H_{0}^{1}\left(
\Omega \right) ,y\geq \varphi \right\}.
\end{equation*}

According to the result given in \cite{Ghanem} the authors point
out that, in spite of the elimination of the inequality constraint
given by \eqref{Radius_R}, we still get a local convergence
property implied by the constraint $\left\Vert \varphi
_{n}\right\Vert _{H^{2}\left( \Omega \right) }\leq R$. Hence, we
are again confronted to the inequality constraint
\eqref{Radius_R}.

So we note that it is not necessary to suppress the constraint
\eqref{Radius_R}, because it is going to appear again to get the
local convergence of the algorithm used for the numerical solution
of the problem give by \eqref{cont_pb_lapla}.

For the numerical solution of optimal control problem, it is usual to use two kinds of numerical approaches:  direct
and indirect methods.  Direct methods consist in discretizing  the cost function, the state and
the control and thus reduce the problem to a nonlinear optimization problem with constraints.
Indirect methods consist of solving numerically the optimality system given by the state, the adjoint and the projection equations.

The aim of this paper is the numerical solution of the optimal
control problem given in \cite{BergLenhBi} by using the indirect approach (after optimisation)   based on the same idea
and techniques given in \cite{Ghanemzireg}, where the optimality
system is characterized by

\begin{equation*}
\left\{
\begin{array}{l}
Ay^{\delta }+(\beta_{\delta } (y^{\delta }-\varphi ^{\delta
})-\beta_{\delta } (\psi ^{\delta }-y^{\delta }))=f\text{ in }\Omega \text{ and } y^{\delta}=0 \text{ on }\partial\Omega  \\
\\
A^{\ast }p^{\delta }+\mu _{1}^{\delta }+\mu _{2}^{\delta }=y^{\delta }-z%
\text{ in }\Omega \text{ and } p^{\delta}=0 \text{ on }\partial\Omega   \\
\\
\left(\mu_{1}+\varphi^{\delta}-\varphi^{\ast}, \varphi -
\varphi^{\delta}\right)+ \left(\mu_{2}+\psi^{\delta}-\psi^{\ast},
\psi - \psi^{\delta}\right)+ \\ + \nu \left(\Delta
\varphi^{\delta}, \Delta\left(\varphi -
\varphi^{\delta}\right)\right)+\nu \left(\Delta \psi^{\delta},
\Delta\left(\psi - \psi^{\delta}\right)\right)=0,
 \text{ for all } \varphi \text{ in } \mathcal{U}_{ad}
\end{array}%
\right.
\end{equation*}

For the numerical solution, we first begin by discretizing the
optimality system by using finite differences schemes and then by
proposing an iterative algorithm based on Gauss-Seidel method that
is a combination of damped-Newton-Raphson and a direct method.

The main difficulties of this work compared to the one considered
in \cite{Ghanemzireg}, is to get an optimality system numerically
exploitable by the proposed algorithm.

In the sequel, we denote by $\mathcal{B}_{V}\left( 0,r\right) $
the $V$-ball around $o$ of radius $r$ and by $C$ generic positive
constants.

The rest of paper is organized as follows: in section 2 we give
precise assumptions and some well-known results. In section 3, we
introduce the iterative algorithm and give convergence results to
solve the optimality system. Section 4 is devoted to numerical
examples that illustrate the theoretical findings and in section 5
we present some remarks and a conclusion.

\section{Preliminaries and known results}

We consider the bilinear form $\sigma (\cdot ,\cdot )$ defined in $%
H^{1}(\Omega )\times H^{1}(\Omega )$, where we assume that the
following conditions are fulfilled

\begin{description}

\item[H\textsubscript{1}.] Continuity%
\begin{equation*}
\exists \,C>0,\forall u ,v \in H^{1}(\Omega ),\,\left\vert \sigma
(u ,v )\right\vert \leq C\left\Vert u \right\Vert _{H^{1}(\Omega
)}\left\Vert v \right\Vert _{H^{1}(\Omega )} \label{conbilfora}
\end{equation*}%
\end{description}
\begin{description}
\item [H\textsubscript{2}.] Coercivity%
\begin{equation*}
\exists \,c>0,\forall u \in H^{1}(\Omega ),\,\sigma (u ,u )\geq
c\left\Vert u \right\Vert _{H^{1}(\Omega )}^{2}
\label{coebiylfora}
\end{equation*}
\end{description}

We call $A$ in $ \mathcal{L}(H^{1}(\Omega ),H^{-1}(\Omega ))$ the
linear self-adjoint
elliptic operator (see \cite{LionMage}) associated to $\sigma $ such that $%
\left\langle Au,v\right\rangle =\sigma (u,v)$, and assume that the
adjoint form $\sigma^{*} (\cdot,\cdot)$ satisfies the conditions
H\textsubscript{1} and H\textsubscript{2}.

For any $\varphi$ and $\psi $ in $ H_{0}^{1}\left( \Omega \right)
$, we define

{\small
\begin{equation}
\mathcal{K}(\varphi ,\psi )=\left\{ y\in H_{0}^{1}(\Omega )\mid
\psi \geq y\geq \varphi ~\text{\ in }~\Omega \right\},  \label{k}
\end{equation}%
}

and consider the following variational inequality

{\small
\begin{equation}
\sigma \left( y,\,v-y\right) \geq \left( f,\,v-y\right) ,\quad
\text{ for all }v\text{ in }\mathcal{K}(\varphi ,\psi ),
\label{eq:var_eq}
\end{equation}%
}

where $f$ belongs to $L^{2}\left( \Omega \right) $ is a source
term. From now on, we define the operator $\mathcal{T}$
(control-to-state operator) from $\mathcal{U}\times \mathcal{U}$ to $\mathcal{U}$, such that $%
y=\mathcal{T}\left( \varphi ,\psi \right) $ is the unique solution
to the obstacle problem given by $\left( \ref{k}\right) $ and
$\left( \ref{eq:var_eq}\right) $
(see \cite{lionstamp}), where $\mathcal{U}=H^{2}(\Omega )\cap H_{0}^{1}(\Omega )$.\\
 Let $\mathcal{U}_{ad}$ be the
set of admissible controls which is assumed to be $
H^{2}(\Omega)$-bounded subset of $H^{2}(\Omega)\cap
H_{0}^{1}(\Omega)$, convex and closed in $H^{2}(\Omega)$. We may
choose, for example,
\begin{equation}
 \mathcal{U}_{ad} =\mathcal{B}_{H^{2}}(0,R)=\{ v \text{ in }H^{2}(\Omega)\cap
H_{0}^{1}(\Omega)|\|v\|_{H^{2}}\leq R\}
 \end{equation}
 where $R$ is a large enough positive real number. This boundedness
assumption for $\mathcal{U}_{ad}$ is crucial: it gives a priori
$H^{2}$ - estimates on the control functions and leads to the
existence of a solution. Now, we consider the optimal control
problem \eqref{cstfunctn} defined as follows

\begin{multline}\label{cstfunctn}
\min \{ J(\varphi ,\psi )=\tfrac{1}{2}\int\nolimits_{\Omega
}\left(
\mathcal{T}\left( \varphi ,\psi \right) -z\right) ^{2}d{x}+\tfrac{\nu }{2}%
\left( \int\nolimits_{\Omega }\left( \left( \nabla \varphi \right)
^{2}+\left( \nabla \psi \right) ^{2}\right) d{x}\right) ,\\
\text{ \ for all \ }\varphi ,\psi \in \mathcal{U}_{ad}\} ,%
\tag{P}
\end{multline}%

where $\nu $ is a strictly given positive constant, $z$ in
$L^{2}\left( \Omega \right) $. We
seek the obstacles (optimals controls) $\left( {\small \bar{\varphi},\bar{\psi}}%
\right) $ in $\mathcal{U}_{ad}^{2}$, such that the corresponding
state is close to a target profile $z$.

To derive necessary conditions for an optimal control, we would
like to differentiate the map $\left( {\small \varphi ,\psi
}\right) \mapsto \mathcal{T}\left( {\small \varphi ,\psi }\right)
$. Since the map $\left( {\small \varphi ,\psi }\right) \mapsto
\mathcal{T}\left( {\small \varphi ,\psi }\right) $ is not
directly differentiable (see \cite{Mign}), the idea here consists in approximating the map $%
\mathcal{T}\left( \varphi ,\psi \right) $ by a family of maps
$\mathcal{T}^{\delta }\left(
\varphi ,\psi \right) $ and replacing the obstacle problem $\left( \ref%
{eq:var_eq}\right) $ and \eqref{k} by the following smooth
semilinear equation (see \cite{MignPuel}, \cite{Brezis2}):

\begin{equation*}
Ay+\left( \beta _{\delta }\left( y-\varphi \right) -\beta _{\delta
}\left(
\psi -y\right) \right) =f\quad \text{in }~\Omega ,\text{ and } y=0~\text{ on }%
~\partial \Omega .  \label{semi_line_equ}
\end{equation*}%

Then, the approximation map $\left( {\small \varphi ,\psi }\right)
\mapsto \mathcal{T}^{\delta }\left( {\small \varphi ,\psi }\right)
$ will then be differentiable and approximate necessary conditions
will be derived, such that

\begin{equation*}
\beta _{\delta }(r)=\tfrac{1}{\delta }%
\begin{cases}
0 & \mathrm{if\ }r\geq 0 \\
-r^{2} & \mathrm{if\ }r\in \left[ -\frac{1}{2},0\right] \\
r+\frac{1}{4} & \mathrm{if\ }r\leq -\frac{1}{2}%
\end{cases}%
\end{equation*}%

where $\beta(\cdot) $ is negative and belongs to
$\mathscr{C}^{1}\left( \mathbb{R}\right) $, such that $\delta $ is
strictly positive and goes to $0$. Then $\beta _{\delta }^{\prime
}(\cdot)$ is given by

\begin{equation*}
\beta _{\delta }^{\prime }(r)=\tfrac{1}{\delta }%
\begin{cases}
0 & \mathrm{if\ }r\geq 0 \\
-2r & \mathrm{if\ }r\in \left[ -\frac{1}{2},0\right] \\
1 & \mathrm{if\ }r\leq -\frac{1}{2}%
\end{cases}%
\end{equation*}%

As $\beta _{\delta }(\cdot -\varphi )-\beta _{\delta }(\psi -\cdot
)$ is nondecreasing, it is well known (see \cite{Gilbarg}), that
boundary value
problem (\ref{semi_line_equ}) admits a unique solution $y^{\delta }$ in $%
H^{2}(\Omega )\cap H_{0}^{1}(\Omega )$ for a fixed $\varphi $ and $\psi $ in $%
H^{2}(\Omega )\cap H_{0}^{1}(\Omega )$ and $f$ in $L^{2}\left(
\Omega \right) $. In the sequel, we set $y^{\delta
}=\mathcal{T}^{\delta }\left( \varphi ,\psi \right) $ and in
addition, $c$ or $C$ denotes a general positive constant
independent of any approximation parameter. So for any $\delta
> 0 $, we define

\begin{equation}\label{cstfonn}
J_{\delta }\left( \varphi ,\psi \right)
=\tfrac{1}{2}[\int\nolimits_{\Omega
}\left( \mathcal{T}^{\delta }\left( \varphi ,\psi \right) -z\right) ^{2}dx+%
\nu  \int\nolimits_{\Omega }\left( \left( \nabla \varphi \right)
^{2}+\left( \nabla \psi \right) ^{2}\right) d{x}] \tag{$P^{\delta
}$}.
\end{equation}

Then, the approximate optimal control problem  is given by
\begin{equation}\label{cstftnD}
min\{J_{\delta }\left( \varphi ,\psi \right),\varphi ,\psi \text{
in }\mathcal{U}_{ad}\times\mathcal{U}_{ad}\} .
\end{equation}

and by using the same techniques given in \cite{Bar1} and
\cite{BergLenhBi}, the problem \eqref{cstftnD} has, at least, one
solution denoted by $(y^{\delta} ,p^{\delta}
,\varphi^{\delta},\psi^{\delta} )$ and characterized by the
following Theorem

\begin{theorem}
Since $\left( \varphi ^{\delta },\psi ^{\delta }\right) $ is an
optimal solution to $\left( \mathcal{P}^{\delta }\right) ,$ and
$y^{\delta }=\mathcal{T}^{\delta }\left( \varphi ^{\delta },\psi
^{\delta }\right).$ Then there exist $p^{\delta }\text{ in }
\mathcal{U}$, $\mu _{1}^{\delta }=\beta _{\delta }^{^{\prime
}}(y^{\delta }-\varphi ^{\delta })p^{\delta }$ and $\mu
_{2}^{\delta }=\beta_{\delta } ^{^{\prime }}(\psi ^{\delta
}-y^{\delta })p^{\delta }\text{ in }
L^{2}\left( \Omega \right) $ such that the following optimality system $%
\left(S^{\delta }\right) $ is satisfied%

\begin{equation*}
\left\{
\begin{array}{l}
Ay^{\delta }+(\beta_{\delta } (y^{\delta }-\varphi ^{\delta
})-\beta_{\delta } (\psi ^{\delta }-y^{\delta }))=f\text{ in }\Omega  \\
\\
A^{\ast }p^{\delta }+\mu _{1}^{\delta }+\mu _{2}^{\delta }=y^{\delta }-z%
\text{ in }\Omega   \\
\\

\nu \Delta \varphi ^{\delta }+\nu \Delta \psi ^{\delta
}+\beta_{\delta } {^{\prime }}(y^{\delta }-\varphi ^{\delta
})p^{\delta } + \beta _{\delta }{^{\prime }}(\psi ^{\delta
}-y^{\delta })p^{\delta }=0\\
\\
y^{\delta }=p^{\delta }=\varphi^{\delta }=\psi^{\delta }=0 \text{
on }
\partial \Omega
\end{array}%
\right.
\end{equation*}

\end{theorem}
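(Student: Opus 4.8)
The proof proceeds by the adjoint--state method, which is available here because, contrary to $\mathcal{T}$, the penalized control--to--state map $\mathcal{T}^{\delta}$ is continuously differentiable. The existence of at least one minimizing pair $(\varphi^{\delta},\psi^{\delta})$ of $J_{\delta}$ over $\mathcal{U}_{ad}\times\mathcal{U}_{ad}$ is obtained, as in \cite{Bar1,BergLenhBi}, by the direct method: a minimizing sequence is bounded in $\Hd$ because $\mathcal{U}_{ad}$ is $\Hd$--bounded, hence along a subsequence converges weakly in $\U$ and strongly in $\Huz$; the map $\mathcal{T}^{\delta}$ is continuous for these topologies thanks to the a priori $\Hd$ estimate on the semilinear state equation and the Lipschitz character of $\beta_{\delta}$, $J_{\delta}$ is weakly lower semicontinuous, and the limit stays in the closed convex set $\mathcal{U}_{ad}$. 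From now on $(\varphi^{\delta},\psi^{\delta})$ is such an optimum and $y^{\delta}=\mathcal{T}^{\delta}(\varphi^{\delta},\psi^{\delta})$.

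The first real step is the differentiability of $\mathcal{T}^{\delta}$. Setting $G(y,\varphi,\psi)=Ay+\beta_{\delta}(y-\varphi)-\beta_{\delta}(\psi-y)-f$, the map $G\colon\U\times\U\times\U\to\Ld$ is $\mathscr{C}^{1}$ since $\beta_{\delta}\in\mathscr{C}^{1}(\R)$ with bounded derivative, and its partial differential in $y$, namely $w\mapsto Aw+\bigl(\beta_{\delta}^{\prime}(y-\varphi)+\beta_{\delta}^{\prime}(\psi-y)\bigr)w$, is an isomorphism of $\U$ onto $\Ld$: the zero--order coefficient is nonnegative, so hypothesis H\textsubscript{2} still yields coercivity and invertibility from $\Huz$ onto $\DHu$ by Lax--Milgram, while that coefficient is bounded by $2/\delta$ in $L^{\infty}(\Omega)$, so $\Hd$ elliptic regularity (\cite{Gilbarg}) lifts the solution to $\U$. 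The implicit function theorem then gives that $\mathcal{T}^{\delta}$ is $\mathscr{C}^{1}$, and $\zeta:=D\mathcal{T}^{\delta}(\varphi^{\delta},\psi^{\delta})(h_{1},h_{2})$ is the unique element of $\U$ solving
\begin{equation*}
A\zeta+\bigl(\beta_{\delta}^{\prime}(y^{\delta}-\varphi^{\delta})+\beta_{\delta}^{\prime}(\psi^{\delta}-y^{\delta})\bigr)\zeta=\beta_{\delta}^{\prime}(y^{\delta}-\varphi^{\delta})\,h_{1}+\beta_{\delta}^{\prime}(\psi^{\delta}-y^{\delta})\,h_{2}.
\end{equation*}

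Next I would introduce the adjoint state. By the same Lax--Milgram plus $\Hd$--regularity argument applied to $A^{*}$ (whose form $\sigma^{*}$ satisfies H\textsubscript{1} and H\textsubscript{2}) with the nonnegative bounded coefficient $\beta_{\delta}^{\prime}(y^{\delta}-\varphi^{\delta})+\beta_{\delta}^{\prime}(\psi^{\delta}-y^{\delta})$, there is a unique $p^{\delta}\in\U$ with $A^{*}p^{\delta}+\bigl(\beta_{\delta}^{\prime}(y^{\delta}-\varphi^{\delta})+\beta_{\delta}^{\prime}(\psi^{\delta}-y^{\delta})\bigr)p^{\delta}=y^{\delta}-z$ in $\Omega$ and $p^{\delta}=0$ on $\partial\Omega$; putting $\mu_{1}^{\delta}=\beta_{\delta}^{\prime}(y^{\delta}-\varphi^{\delta})p^{\delta}$ and $\mu_{2}^{\delta}=\beta_{\delta}^{\prime}(\psi^{\delta}-y^{\delta})p^{\delta}$, which belong to $\Ld$ since $\beta_{\delta}^{\prime}\in L^{\infty}(\Omega)$, gives exactly the second line of $(S^{\delta})$. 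Pairing the adjoint equation with $\zeta$ and the linearized equation with $p^{\delta}$ and subtracting, the principal parts agree ($\langle A^{*}p^{\delta},\zeta\rangle=\langle A\zeta,p^{\delta}\rangle$) and the zero--order terms cancel, leaving the key identity
\begin{equation*}
(y^{\delta}-z,\zeta)_{\Ld}=\bigl(\mu_{1}^{\delta},h_{1}\bigr)_{\Ld}+\bigl(\mu_{2}^{\delta},h_{2}\bigr)_{\Ld}.
\end{equation*}
Since $(\varphi^{\delta},\psi^{\delta})$ minimizes the (reduced) functional $J_{\delta}$ of \eqref{cstfonn} over the closed convex set $\mathcal{U}_{ad}\times\mathcal{U}_{ad}$, the Euler inequality $\langle J_{\delta}^{\prime}(\varphi^{\delta},\psi^{\delta}),(\varphi-\varphi^{\delta},\psi-\psi^{\delta})\rangle\geq 0$ holds for all admissible $(\varphi,\psi)$; combining the chain rule with the key identity it becomes
\begin{equation*}
\bigl(\mu_{1}^{\delta},\varphi-\varphi^{\delta}\bigr)_{\Ld}+\bigl(\mu_{2}^{\delta},\psi-\psi^{\delta}\bigr)_{\Ld}+\nu\bigl(\nabla\varphi^{\delta},\nabla(\varphi-\varphi^{\delta})\bigr)_{\Ld}+\nu\bigl(\nabla\psi^{\delta},\nabla(\psi-\psi^{\delta})\bigr)_{\Ld}\geq 0 .
\end{equation*}
Under the standing assumption of \cite{BergLenhBi} that $R$ is large enough for the ball constraint not to be active at the optimum, this variational inequality is an equality for arbitrary directions $(h_{1},h_{2})\in\U\times\U$; taking $h_{2}=0$ and then $h_{1}=0$ and integrating the regularization terms by parts (legitimate since $\varphi^{\delta},\psi^{\delta}\in\Hd$ and the test functions vanish on $\partial\Omega$) produces the two scalar Euler equations displayed together as the third line of $(S^{\delta})$, while the boundary conditions $y^{\delta}=p^{\delta}=\varphi^{\delta}=\psi^{\delta}=0$ on $\partial\Omega$ hold because all four functions belong to $\Huz$.

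I expect the main obstacle to be the differentiability step: one must verify that $\partial_{y}G$ maps $\U$ onto $\Ld$ --- not merely $\Huz$ onto $\DHu$ --- which forces the use of $\Hd$ elliptic regularity for $A$ with a merely $L^{\infty}$ zero--order coefficient, and then that the implicit function theorem identifies $D\mathcal{T}^{\delta}$ with the linearized solution operator in a full neighborhood of $(\varphi^{\delta},\psi^{\delta})$. Once this is secured, the adjoint computation and the passage from the variational inequality to the pointwise equations in $\Omega$ are routine.
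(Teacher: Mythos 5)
The paper never actually proves this theorem: it is asserted with the remark that it follows ``by using the same techniques given in \cite{Bar1} and \cite{BergLenhBi}''. Your proposal correctly reconstructs that standard argument --- existence by the direct method, $\mathscr{C}^{1}$ differentiability of $\mathcal{T}^{\delta}$ via the implicit function theorem (the zero--order coefficient $\beta_{\delta}^{\prime}(y^{\delta}-\varphi^{\delta})+\beta_{\delta}^{\prime}(\psi^{\delta}-y^{\delta})$ is nonnegative and bounded by $2/\delta$, so Lax--Milgram plus $H^{2}$ elliptic regularity applies), the adjoint state, the duality identity, and the Euler condition --- so in substance you are supplying the proof the paper delegates to its references. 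Two points deserve a flag. First, with $\mathcal{U}_{ad}=\mathcal{B}_{H^{2}}(0,R)$ the first--order condition is a priori only a variational inequality, and the pointwise third line of $(S^{\delta})$ needs exactly the ``constraint inactive'' hypothesis you state; the paper makes the same implicit assumption ($R$ large enough) and even discusses the resulting difficulty in its introduction, so this is a legitimate caveat rather than a gap. Second, differentiating the regularization term $\tfrac{\nu}{2}\int_{\Omega}(\nabla\varphi)^{2}$ of $(P^{\delta})$ and integrating by parts yields $\mu_{1}^{\delta}-\nu\Delta\varphi^{\delta}=0$ (and similarly for $\psi$), which differs in the sign of the Laplacian from the displayed line $\nu\Delta\varphi^{\delta}+\nu\Delta\psi^{\delta}+\mu_{1}^{\delta}+\mu_{2}^{\delta}=0$; this discrepancy originates in the paper itself (whose introduction even writes the condition with $(\Delta\varphi^{\delta},\Delta(\varphi-\varphi^{\delta}))$, i.e.\ an $H^{2}$--seminorm penalty), not in your derivation, but you should not claim to recover the paper's third line verbatim without noting it.
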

Now, we give some important results relevant for the sequel of
this paper.

\begin{lemma}\label{lm:b1}
From the definition of $\beta(\cdot)$ and since $ p_{n}^{\delta }$
belongs to $\mathcal{B}_{H^{1}_{0}\left( \Omega \right)
}(0,\tilde{\rho_{3}})$, where $\tilde{\rho_{3}}$ is a positive
constant, and for $\left( y_{i}^{\delta },\varphi _{i}^{\delta
},p_{i}^{\delta }\right)$ in  $\tilde{\mathcal{U}}$ where
$\tilde{\mathcal{U}}$=$H^{1}_{0}\left( \Omega \right)\times
H^{2}_{0}\left( \Omega \right) \times H^{1}_{0}\left( \Omega
\right) $ and $ i=1,2$, we get

\begin{multline*}
\parallel
\beta _{\delta }^{\prime }\left( y_{2}^{\delta }-\varphi
_{2}^{\delta }\right) p_{2}^{\delta }-\beta _{\delta }^{\prime
}\left( y_{1}^{\delta }-\varphi_{1}^{\delta }\right) p_{1}^{\delta
}\parallel _{L^{2}\left( \Omega \right) } \leq
\dfrac{C}{\delta}\parallel p_{2}^{\delta }- p_{1}^{\delta
}\parallel _{H^{1}\left( \Omega \right)
}+\\
 \dfrac{C\tilde{\rho}_{3}}{\delta}\parallel y_{2}^{\delta
}-y_{1}^{\delta }\parallel _{L^{2}\left( \Omega \right) }+
\dfrac{C\tilde{\rho}_{3}}{\delta}\parallel \varphi_{2}^{\delta
}-\varphi_{1}^{\delta }\parallel _{L^{2}\left( \Omega \right) }
\end{multline*}
\end{lemma}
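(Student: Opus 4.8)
The plan is to reduce this two-variable nonlinear difference to a one-variable Lipschitz estimate by inserting a cross term. First I would write
\[
\beta _{\delta }^{\prime }(y_{2}^{\delta }-\varphi _{2}^{\delta })p_{2}^{\delta }-\beta _{\delta }^{\prime }(y_{1}^{\delta }-\varphi _{1}^{\delta })p_{1}^{\delta }=\beta _{\delta }^{\prime }(y_{2}^{\delta }-\varphi _{2}^{\delta })(p_{2}^{\delta }-p_{1}^{\delta })+\bigl(\beta _{\delta }^{\prime }(y_{2}^{\delta }-\varphi _{2}^{\delta })-\beta _{\delta }^{\prime }(y_{1}^{\delta }-\varphi _{1}^{\delta })\bigr)p_{1}^{\delta },
\]
and estimate the two terms separately in $L^{2}(\Omega )$, the triangle inequality assembling the final bound.

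For the first term, the explicit formula for $\beta _{\delta }^{\prime }$ gives $0\leq \beta _{\delta }^{\prime }(r)\leq 1/\delta $ for every $r\in \mathbb{R}$, since the only nonzero branches are $-2r/\delta $ on $[-\tfrac12,0]$, which lies in $[0,1/\delta ]$, and the constant $1/\delta $ on $(-\infty ,-\tfrac12]$. Hence
\[
\parallel \beta _{\delta }^{\prime }(y_{2}^{\delta }-\varphi _{2}^{\delta })(p_{2}^{\delta }-p_{1}^{\delta })\parallel _{L^{2}(\Omega )}\leq \dfrac{1}{\delta }\parallel p_{2}^{\delta }-p_{1}^{\delta }\parallel _{L^{2}(\Omega )}\leq \dfrac{C}{\delta }\parallel p_{2}^{\delta }-p_{1}^{\delta }\parallel _{H^{1}(\Omega )},
\]
which is the first term on the right-hand side of the claim.

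For the second term I would use that $\beta _{\delta }^{\prime }$ is globally Lipschitz: differentiating the piecewise formula, $\beta _{\delta }^{\prime \prime }$ exists almost everywhere with $|\beta _{\delta }^{\prime \prime }|\leq 2/\delta $, hence $|\beta _{\delta }^{\prime }(a)-\beta _{\delta }^{\prime }(b)|\leq \tfrac{2}{\delta }|a-b|$. Taking $a=y_{2}^{\delta }-\varphi _{2}^{\delta }$ and $b=y_{1}^{\delta }-\varphi _{1}^{\delta }$ and using the pointwise triangle inequality,
\[
\bigl|\beta _{\delta }^{\prime }(y_{2}^{\delta }-\varphi _{2}^{\delta })-\beta _{\delta }^{\prime }(y_{1}^{\delta }-\varphi _{1}^{\delta })\bigr|\leq \dfrac{2}{\delta }\bigl(|y_{2}^{\delta }-y_{1}^{\delta }|+|\varphi _{2}^{\delta }-\varphi _{1}^{\delta }|\bigr).
\]
Multiplying by $|p_{1}^{\delta }|$, integrating the square over $\Omega $, and applying H\"older's inequality together with the Sobolev embedding $H_{0}^{1}(\Omega )\hookrightarrow L^{\infty }(\Omega )$ and the a priori bound $\parallel p_{1}^{\delta }\parallel _{H_{0}^{1}(\Omega )}\leq \tilde{\rho}_{3}$, I obtain $\parallel (\beta _{\delta }^{\prime }(y_{2}^{\delta }-\varphi _{2}^{\delta })-\beta _{\delta }^{\prime }(y_{1}^{\delta }-\varphi _{1}^{\delta }))p_{1}^{\delta }\parallel _{L^{2}(\Omega )}\leq \tfrac{C\tilde{\rho}_{3}}{\delta }\parallel y_{2}^{\delta }-y_{1}^{\delta }\parallel _{L^{2}(\Omega )}+\tfrac{C\tilde{\rho}_{3}}{\delta }\parallel \varphi _{2}^{\delta }-\varphi _{1}^{\delta }\parallel _{L^{2}(\Omega )}$.

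The routine part is checking the two constants $1/\delta $ and $2/\delta $ from the branch definition of $\beta _{\delta }$. The genuine point, and the step I would be most careful about, is the embedding used to pass from the pointwise product $|p_{1}^{\delta }|\,(|y_{2}^{\delta }-y_{1}^{\delta }|+|\varphi _{2}^{\delta }-\varphi _{1}^{\delta }|)$ to the product of an $L^{\infty }$ norm (or, when $\Omega $ is planar, an $L^{4}$ norm) of $p_{1}^{\delta }$ with the $L^{2}$ norms of the state and control differences; this is exactly where the bound $p_{1}^{\delta }\in \mathcal{B}_{H_{0}^{1}(\Omega )}(0,\tilde{\rho}_{3})$ is consumed and where $C$ absorbs the embedding constant. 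Adding the two estimates through the triangle inequality then gives the stated inequality.
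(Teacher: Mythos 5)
Your proof is correct and follows essentially the same route as the paper: the same splitting into $\beta_{\delta}^{\prime}(y_{2}^{\delta}-\varphi_{2}^{\delta})(p_{2}^{\delta}-p_{1}^{\delta})$ plus $(\beta_{\delta}^{\prime}(y_{2}^{\delta}-\varphi_{2}^{\delta})-\beta_{\delta}^{\prime}(y_{1}^{\delta}-\varphi_{1}^{\delta}))p_{1}^{\delta}$, the bound $0\leq\beta_{\delta}^{\prime}\leq 1/\delta$ for the first term, and the Mean--Value/Lipschitz estimate on $\beta_{\delta}^{\prime}$ combined with the $H^{1}$-ball bound on $p_{1}^{\delta}$ for the second. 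The embedding issue you single out (pairing $p_{1}^{\delta}$ in a strong norm against the $L^{2}$ norms of the differences) is exactly the step the paper compresses into ``Cauchy--Schwarz'' and absorbs into $C$, so you are, if anything, more explicit than the original.
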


\begin{proof}
 By the definition of $\beta^{\prime}\left(\cdot\right)$ we get
\begin{multline*} (\beta _{\delta
}^{\prime }\left( y_{2}^{\delta }-\varphi _{2}^{\delta }\right)
p_{2}^{\delta }-\beta _{\delta }^{\prime }\left( y_{1}^{\delta
}-\varphi_{1}^{\delta }\right) p_{1}^{\delta }) = \beta _{\delta
}^{\prime }\left( y_{2}^{\delta }-\varphi _{2}^{\delta }\right)(
p_{2}^{\delta }- p_{1}^{\delta })+\\
(\beta _{\delta }^{\prime }\left( y_{2}^{\delta }-\varphi
_{2}^{\delta }\right)-\beta _{\delta }^{\prime }\left(
y_{1}^{\delta }-\varphi_{1}^{\delta }\right)) p_{1}^{\delta }.
\end{multline*}

Then, by Cauchy-Schwarz inequality and since $p_{1}^{\delta }$
belongs to $\mathcal{B}_{H^{1}\left( \Omega \right)}(0,\rho_{3}) $
and by the Mean-Value Theorem applied in the interval of sides
$\{\left( y_{2}^{\delta }-\varphi _{2}^{\delta }\right)$,$\left(
y_{1}^{\delta }-\varphi_{1}^{\delta }\right)\}$, we can deduce

\begin{multline*}
\parallel
\beta _{\delta }^{\prime }\left( y_{2}^{\delta }-\varphi
_{2}^{\delta }\right) p_{2}^{\delta }-\beta _{\delta }^{\prime
}\left( y_{1}^{\delta }-\varphi_{1}^{\delta }\right) p_{1}^{\delta
}\parallel _{L^{2}\left( \Omega \right) } \leq
\dfrac{C}{\delta}\parallel p_{2}^{\delta }- p_{1}^{\delta
}\parallel _{H^{1}\left( \Omega \right)
}+\\
 \dfrac{C\tilde{\rho}_{3}}{\delta}\parallel \left(y_{2}^{\delta
}-y_{1}^{\delta }\right)-\left( \varphi _{2}^{\delta
}-\varphi_{1}^{\delta }\right)\parallel _{L^{2}\left( \Omega
\right) }
\end{multline*}

\end{proof}
\begin{lemma}\label{lm:b2}
Let $\left( y_{i}^{\delta },\varphi _{i}^{\delta },p_{i}^{\delta
}\right)$ belong to $\tilde{\mathcal{U}}$ where $ i=1,2$ and by
the properties of $\beta_{\delta }(\cdot)$, we get

\begin{multline*}
\parallel (\beta_{\delta } (y_{2}^{\delta }-\varphi _{2}^{\delta })-\beta_{\delta }
(y_{1}^{\delta }-\varphi _{1}^{\delta }))-(\beta_{\delta } (\psi
_{2}^{\delta }-y_{2}^{\delta })-\beta_{\delta } (\psi _{1}^{\delta
}-y_{1}^{\delta }))\parallel _{L^{2}\left( \Omega
\right)} \leq\\
\dfrac{C}{\delta} \parallel y_{2}^{\delta }-y _{1}^{\delta
}\parallel _{L^{2}\left( \Omega \right)}+
\dfrac{C}{\delta}\parallel\varphi_{2}^{\delta }-\varphi
_{1}^{\delta }\parallel _{L^{2}\left( \Omega \right)} +
\dfrac{C}{\delta}\parallel \psi _{2}^{\delta }-\psi_{1}^{\delta }
\parallel _{L^{2}\left( \Omega
\right)}.
\end{multline*}
\end{lemma}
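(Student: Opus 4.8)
The plan is to reduce the estimate to the elementary fact that $\beta_{\delta}(\cdot)$ is globally Lipschitz on $\mathbb{R}$ with constant $1/\delta$. Indeed, from the explicit formula for $\beta_{\delta}^{\prime}(\cdot)$ one reads off that $0\le-\beta_{\delta}^{\prime}(r)\le\frac{1}{\delta}$ for every $r\in\mathbb{R}$: it vanishes for $r\ge0$, equals $-2r/\delta\in[0,1/\delta]$ for $r\in[-\frac12,0]$, and equals $1/\delta$ for $r\le-\frac12$. Hence, by the Mean-Value Theorem, $|\beta_{\delta}(a)-\beta_{\delta}(b)|\le\frac{1}{\delta}|a-b|$ for all $a,b\in\mathbb{R}$, and this is essentially the only property of $\beta_{\delta}$ that is used.

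First I would split the left-hand side by the triangle inequality in $L^{2}(\Omega)$ into the two pieces
\[
\|\beta_{\delta}(y_{2}^{\delta}-\varphi_{2}^{\delta})-\beta_{\delta}(y_{1}^{\delta}-\varphi_{1}^{\delta})\|_{L^{2}(\Omega)}\quad\text{and}\quad\|\beta_{\delta}(\psi_{2}^{\delta}-y_{2}^{\delta})-\beta_{\delta}(\psi_{1}^{\delta}-y_{1}^{\delta})\|_{L^{2}(\Omega)}.
\]
To the first piece I apply the Lipschitz bound pointwise, obtaining for a.e. $x\in\Omega$ the inequality $|\beta_{\delta}(y_{2}^{\delta}-\varphi_{2}^{\delta})(x)-\beta_{\delta}(y_{1}^{\delta}-\varphi_{1}^{\delta})(x)|\le\frac{1}{\delta}(|(y_{2}^{\delta}-y_{1}^{\delta})(x)|+|(\varphi_{2}^{\delta}-\varphi_{1}^{\delta})(x)|)$; squaring, integrating over $\Omega$, and using once more the triangle inequality in $L^{2}(\Omega)$ then gives the bound $\frac{1}{\delta}\|y_{2}^{\delta}-y_{1}^{\delta}\|_{L^{2}(\Omega)}+\frac{1}{\delta}\|\varphi_{2}^{\delta}-\varphi_{1}^{\delta}\|_{L^{2}(\Omega)}$. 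The second piece is handled in exactly the same way with $\psi$ in place of $\varphi$, and yields $\frac{1}{\delta}\|\psi_{2}^{\delta}-\psi_{1}^{\delta}\|_{L^{2}(\Omega)}+\frac{1}{\delta}\|y_{2}^{\delta}-y_{1}^{\delta}\|_{L^{2}(\Omega)}$.

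Adding the two bounds and collecting the two occurrences of $\|y_{2}^{\delta}-y_{1}^{\delta}\|_{L^{2}(\Omega)}$ into a single generic constant $C$ (for which one may take any $C\ge2$) produces the claimed inequality. I do not expect a genuine obstacle here: the only point deserving a moment's care is the uniform bound $|\beta_{\delta}^{\prime}|\le1/\delta$, immediate from the piecewise definition, together with the bookkeeping that the state difference $y_{2}^{\delta}-y_{1}^{\delta}$ appears in both halves of the split and therefore only inflates the generic constant rather than changing the structure of the estimate. This lemma is the bilateral analogue of Lemma~\ref{lm:b1}, but simpler, since no factor $p^{\delta}$ is present and hence no weight $\tilde{\rho}_{3}$ enters.
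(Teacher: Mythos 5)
Your proposal is correct and follows essentially the same route as the paper: split the difference by the triangle inequality into the $(y-\varphi)$-piece and the $(\psi-y)$-piece, then apply the Mean-Value Theorem (equivalently, the global Lipschitz bound $|\beta_{\delta}'|\le C/\delta$ read off from the piecewise formula) to each piece and absorb the doubled $\|y_{2}^{\delta}-y_{1}^{\delta}\|_{L^{2}(\Omega)}$ term into the generic constant. Your write-up is somewhat more explicit about the pointwise Lipschitz estimate than the paper's, but there is no substantive difference.
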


\begin{proof}
It is easy to see that
\begin{multline*}
\parallel (\beta_{\delta } (y_{2}^{\delta }-\varphi _{2}^{\delta })-\beta_{\delta }
(y_{1}^{\delta }-\varphi _{1}^{\delta }))-(\beta_{\delta } (\psi
_{2}^{\delta }-y_{2}^{\delta })-\beta_{\delta } (\psi _{1}^{\delta
}-y_{1}^{\delta }))\parallel _{L^{2}\left( \Omega
\right)} \leq\\
 \parallel \beta_{\delta } (y_{2}^{\delta }-\varphi _{2}^{\delta
})-\beta_{\delta } (y_{1}^{\delta }-\varphi _{1}^{\delta
})\parallel _{L^{2}\left( \Omega
\right)} \\
+ \parallel \beta_{\delta } (\psi _{2}^{\delta }-y_{2}^{\delta
})-\beta_{\delta } (\psi _{1}^{\delta }-y_{1}^{\delta })\parallel
_{L^{2}\left( \Omega \right)}.
\end{multline*}

By the Mean-Value Theorem applied in the interval of sides
$\{(y_{2}^{\delta }-\varphi _{2}^{\delta })$, $(y_{1}^{\delta
}-\varphi _{1}^{\delta })\}$ and $\{(\psi _{2}^{\delta
}-y_{2}^{\delta })$, $(\psi _{1}^{\delta }-y_{1}^{\delta })\}$, we
get

\begin{multline*}
\parallel (\beta_{\delta } (y_{2}^{\delta }-\varphi _{2}^{\delta })-\beta_{\delta }
(y_{1}^{\delta }-\varphi _{1}^{\delta })-(\beta_{\delta } (\psi
_{2}^{\delta }-y_{2}^{\delta })-\beta_{\delta } (\psi _{1}^{\delta
}-y_{1}^{\delta }))\parallel _{L^{2}\left( \Omega
\right)} \leq\\
\dfrac{C}{\delta} \parallel y_{2}^{\delta }-y_{1}^{\delta
}\parallel _{L^{2}\left( \Omega \right)}+
\dfrac{C}{\delta}\parallel\varphi_{2}^{\delta }-\varphi
_{1}^{\delta }\parallel _{L^{2}\left( \Omega \right)} +
\dfrac{C}{\delta}\parallel \psi _{2}^{\delta }-\psi_{1}^{\delta }
\parallel _{L^{2}\left( \Omega
\right)}.
\end{multline*}

\end{proof}

\begin{theorem}
For any triplet $\left( y_{i}^{\delta },\varphi _{i}^{\delta
},\psi _{i}^{\delta }\right) \text{ in } \tilde{\mathcal{U}}$ that
satisfies the optimality system
$\left({S}^{\delta }\right) $ where $i=1,2$, and since $\delta \leq C$, we get%
\begin{equation*}
\parallel y_{2}^{\delta }-y_{1}^{\delta }\parallel _{H^{1}\left( \Omega
\right) }\leq l_{1}(\parallel \varphi _{2}^{\delta }-\varphi
_{1}^{\delta }\parallel _{L^{2}\left( \Omega \right) }+\parallel
\psi _{2}^{\delta }-\psi _{1}^{\delta }\parallel _{L^{2}\left(
\Omega \right) }).
\end{equation*}%

where $l_{1}:=\dfrac{C}{\delta }$. This means that the mapping $y^{\delta }:=%
\mathcal{T}^{\delta }\left( \varphi ^{\delta },\psi ^{\delta
}\right) $, is Lipschitizian, with a Lipschitz constant $l_{1}$.
\end{theorem}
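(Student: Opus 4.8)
The plan is to subtract the two copies of the state equation in $(S^\delta)$ and test the result against $y_2^\delta - y_1^\delta$. Writing $w := y_2^\delta - y_1^\delta$, the first equation of $(S^\delta)$ for $i=1,2$ gives
\begin{equation*}
Aw = -\big[(\beta_\delta(y_2^\delta-\varphi_2^\delta)-\beta_\delta(\psi_2^\delta-y_2^\delta)) - (\beta_\delta(y_1^\delta-\varphi_1^\delta)-\beta_\delta(\psi_1^\delta-y_1^\delta))\big] \quad\text{in }\Omega,
\end{equation*}
with $w=0$ on $\partial\Omega$. First I would pair both sides with $w$ and use the coercivity hypothesis H\textsubscript{2}, so that $\sigma(w,w)\ge c\|w\|_{H^1(\Omega)}^2$ bounds the left-hand side from below.

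Next I would estimate the right-hand side. The nonlinear term is exactly the quantity whose $L^2$-norm is controlled by Lemma \ref{lm:b2}; after applying Cauchy--Schwarz to the pairing $(\,\cdot\,,w)_{L^2(\Omega)}$ and then Lemma \ref{lm:b2}, one obtains
\begin{equation*}
c\|w\|_{H^1(\Omega)}^2 \le \frac{C}{\delta}\big(\|w\|_{L^2(\Omega)} + \|\varphi_2^\delta-\varphi_1^\delta\|_{L^2(\Omega)} + \|\psi_2^\delta-\psi_1^\delta\|_{L^2(\Omega)}\big)\,\|w\|_{L^2(\Omega)}.
\end{equation*}
Note here that the monotonicity of $r\mapsto \beta_\delta(\cdot-\varphi)-\beta_\delta(\psi-\cdot)$ could in fact be used to drop the $\|w\|_{L^2}^2$ term on the right entirely (the difference of the nonlinear terms paired with $w$ has a sign), which is the cleanest route; alternatively one keeps it and absorbs it. Using $\|w\|_{L^2(\Omega)}\le\|w\|_{H^1(\Omega)}$ on the right, dividing by $\|w\|_{H^1(\Omega)}$, and then absorbing the $\frac{C}{\delta}\|w\|_{H^1}$ contribution into the left side — which is where the hypothesis $\delta\le C$ is invoked, so that the net coefficient of $\|w\|_{H^1}^2$ stays positive, or more simply by using the sign trick so no absorption is needed — yields
\begin{equation*}
\|w\|_{H^1(\Omega)} \le \frac{C}{\delta}\big(\|\varphi_2^\delta-\varphi_1^\delta\|_{L^2(\Omega)} + \|\psi_2^\delta-\psi_1^\delta\|_{L^2(\Omega)}\big),
\end{equation*}
which is the claim with $l_1 = C/\delta$.

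The main obstacle is bookkeeping rather than depth: one must be careful that the constant absorption is legitimate, i.e. that after moving the $\frac{C}{\delta}\|w\|_{H^1}^2$ term left one still has a strictly positive multiple of $\|w\|_{H^1}^2$. This is exactly what the standing assumption $\delta\le C$ buys; without a sign argument one would instead need $\delta$ small relative to the coercivity constant. Using the monotonicity of the penalized nonlinearity avoids this entirely — the term $\big((\beta_\delta(y_2^\delta-\varphi^\delta)-\beta_\delta(\psi^\delta-y_2^\delta))-(\ldots_1),\,w\big)$ splits into a part that is monotone in $w$ (hence $\ge 0$ after the sign flip) plus cross terms in $\varphi_2^\delta-\varphi_1^\delta$ and $\psi_2^\delta-\psi_1^\delta$, and only the latter need to be estimated. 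I would present the sign-based version as the primary argument and remark that Lemma \ref{lm:b2} also gives the bound directly. The final sentence identifying $y^\delta = \mathcal{T}^\delta(\varphi^\delta,\psi^\delta)$ as Lipschitz with constant $l_1$ is then immediate from the definition of $\mathcal{T}^\delta$.
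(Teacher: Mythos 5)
Your monotonicity-based argument is correct and self-contained, whereas the paper itself gives no proof at all for this theorem --- it simply refers the reader to \cite{BergLenhBi}. Your primary route (subtract the two state equations, test with $w=y_2^{\delta}-y_1^{\delta}$, use coercivity H\textsubscript{2}, split the nonlinear difference into a monotone-in-$w$ part that has a favourable sign --- since $\beta_{\delta}$ is nondecreasing, both $\bigl(\beta_{\delta}(y_2^{\delta}-\varphi)-\beta_{\delta}(y_1^{\delta}-\varphi)\bigr)w\geq 0$ and $-\bigl(\beta_{\delta}(\psi-y_2^{\delta})-\beta_{\delta}(\psi-y_1^{\delta})\bigr)w\geq 0$ pointwise --- plus cross terms controlled by $\tfrac{1}{\delta}\|\varphi_2^{\delta}-\varphi_1^{\delta}\|_{L^2}$ and $\tfrac{1}{\delta}\|\psi_2^{\delta}-\psi_1^{\delta}\|_{L^2}$ via $0\leq\beta_{\delta}'\leq\tfrac{1}{\delta}$) is exactly the standard argument behind the cited result and yields the claimed constant $l_1=C/\delta$. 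One caution: the ``absorption'' variant you mention as an alternative does not actually work. Applying Lemma \ref{lm:b2} wholesale leaves a term $\tfrac{C}{\delta}\|w\|_{L^2}^2$ on the right, and absorbing it into $c\|w\|_{H^1}^2$ requires $c-\tfrac{C}{\delta}>0$, i.e.\ $\delta$ bounded \emph{below}, which is incompatible with the penalization regime $\delta\to 0$; the standing hypothesis $\delta\leq C$ does not rescue it. So you should present only the sign-based version, as you indeed propose to do; with that choice the proof is complete and the final identification of $\mathcal{T}^{\delta}$ as Lipschitz with constant $l_1$ is immediate.
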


\begin{proof}
see \cite{BergLenhBi}
\end{proof}

\begin{lemma}
For any triplet $\left( y^{\delta },\varphi ^{\delta },\psi
^{\delta }\right) $ in $\tilde{\mathcal{U}}$, satisfying
the optimality system $(S^{\delta })$, we have%
\begin{equation*}
\parallel y^{\delta }\parallel _{H^{1}\left( \Omega\right)}\leq \max \left( C\parallel \varphi
^{\delta }\parallel _{H^{1}\left( \Omega\right)},C\parallel \psi
^{\delta }\parallel _{H^{1}\left( \Omega\right)}\right),
\end{equation*}

and moreover when $\varphi ^{\delta }$ and $\psi ^{\delta }${\small \ }belong to $%
B_{H^{2}\left( \Omega\right)}\left( 0,\rho _{1}\right) \cap
\mathcal{W},$ we deduce that

\begin{equation*}
\parallel y^{\delta }\parallel _{H^{1}\left( \Omega \right) }\leq \rho _{2}.
\label{1.4}
\end{equation*}%

This means that $y^{\delta }$ \text{ belongs to } $B_{H^{1}\left(
\Omega\right)}\left( 0,\rho _{2}\right) \cap \mathcal{U}$, where
$\rho _{2}:=C\rho _{1}$.
\end{lemma}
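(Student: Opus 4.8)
The plan is to derive the bound directly from the first equation of the optimality system $(S^{\delta})$, namely
\[
Ay^{\delta}+\bigl(\beta_{\delta}(y^{\delta}-\varphi^{\delta})-\beta_{\delta}(\psi^{\delta}-y^{\delta})\bigr)=f \quad\text{in }\Omega,\qquad y^{\delta}=0\text{ on }\partial\Omega,
\]
by testing it against $y^{\delta}$ itself and exploiting the coercivity hypothesis H\textsubscript{2}. First I would pair the equation with $v=y^{\delta}\in H^{1}_{0}(\Omega)$ to obtain
\[
\sigma(y^{\delta},y^{\delta})+\bigl(\beta_{\delta}(y^{\delta}-\varphi^{\delta})-\beta_{\delta}(\psi^{\delta}-y^{\delta}),\,y^{\delta}\bigr)=(f,y^{\delta}).
\]
By H\textsubscript{2} the first term is bounded below by $c\|y^{\delta}\|_{H^{1}(\Omega)}^{2}$, so the crux is to control the sign of the penalization term.

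Next I would analyze the term $\bigl(\beta_{\delta}(y^{\delta}-\varphi^{\delta})-\beta_{\delta}(\psi^{\delta}-y^{\delta}),y^{\delta}\bigr)$. Since $\beta_{\delta}\le 0$ and $\beta_{\delta}$ is monotone, the usual trick is to split $y^{\delta}=(y^{\delta}-\varphi^{\delta})+\varphi^{\delta}$ in the first piece and $y^{\delta}=-(\psi^{\delta}-y^{\delta})+\psi^{\delta}$ in the second. Because $r\mapsto\beta_{\delta}(r)$ is nondecreasing with $\beta_{\delta}(r)r\ge 0$ for the relevant sign, the "diagonal" contributions $\beta_{\delta}(y^{\delta}-\varphi^{\delta})(y^{\delta}-\varphi^{\delta})$ and $-\beta_{\delta}(\psi^{\delta}-y^{\delta})(\psi^{\delta}-y^{\delta})$ have a favorable sign, and what remains is $\bigl(\beta_{\delta}(y^{\delta}-\varphi^{\delta}),\varphi^{\delta}\bigr)-\bigl(\beta_{\delta}(\psi^{\delta}-y^{\delta}),\psi^{\delta}\bigr)$, which by Cauchy–Schwarz is bounded by $\|\beta_{\delta}(y^{\delta}-\varphi^{\delta})\|_{L^{2}}\|\varphi^{\delta}\|_{L^{2}}+\|\beta_{\delta}(\psi^{\delta}-y^{\delta})\|_{L^{2}}\|\psi^{\delta}\|_{L^{2}}$. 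Using the Lipschitz character of $\beta_{\delta}$ together with $\beta_{\delta}(0)=0$, one gets $\|\beta_{\delta}(y^{\delta}-\varphi^{\delta})\|_{L^{2}}\le\frac{C}{\delta}\|y^{\delta}-\varphi^{\delta}\|_{L^{2}}$ and similarly for the other term; absorbing the resulting $\|y^{\delta}\|_{L^{2}}$ terms into the coercivity term (possibly after using that $\delta\le C$ as in the preceding theorem, so the constants stay uniform) yields $\|y^{\delta}\|_{H^{1}(\Omega)}\le C\bigl(\|\varphi^{\delta}\|_{H^{1}(\Omega)}+\|\psi^{\delta}\|_{H^{1}(\Omega)}\bigr)$, and hence the stated $\max$-type estimate after a further trivial bounding.

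For the second assertion I would simply specialize: if $\varphi^{\delta},\psi^{\delta}\in B_{H^{2}(\Omega)}(0,\rho_{1})\cap\mathcal{W}$, then $\|\varphi^{\delta}\|_{H^{1}(\Omega)}\le C\|\varphi^{\delta}\|_{H^{2}(\Omega)}\le C\rho_{1}$ and likewise for $\psi^{\delta}$, whence $\|y^{\delta}\|_{H^{1}(\Omega)}\le C\rho_{1}=:\rho_{2}$, so $y^{\delta}\in B_{H^{1}(\Omega)}(0,\rho_{2})\cap\mathcal{U}$ (membership in $\mathcal{U}=H^{2}(\Omega)\cap H^{1}_{0}(\Omega)$ being already guaranteed since $y^{\delta}=\mathcal{T}^{\delta}(\varphi^{\delta},\psi^{\delta})$).

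The main obstacle I anticipate is the careful handling of the penalization term: one must make sure the "cross" terms $\bigl(\beta_{\delta}(y^{\delta}-\varphi^{\delta}),\varphi^{\delta}\bigr)$ and $\bigl(\beta_{\delta}(\psi^{\delta}-y^{\delta}),\psi^{\delta}\bigr)$ are estimated so that the $\|y^{\delta}\|_{L^{2}}$ contribution they generate can be absorbed into $c\|y^{\delta}\|_{H^{1}}^{2}$ without producing a $\delta$-dependent blow-up in the final constant — this is where the hypothesis $\delta\le C$ and Young's inequality with a suitably chosen weight are needed, and it is the only genuinely delicate point; the rest is bookkeeping with Cauchy–Schwarz and the continuous embedding $H^{2}(\Omega)\hookrightarrow H^{1}(\Omega)$.
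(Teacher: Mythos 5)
The paper itself gives no proof here (it just cites Bergounioux--Lenhart), so your argument has to stand on its own, and it has one genuine gap: the absorption step at the end of your second paragraph does not work, and it is precisely the point on which the whole lemma turns. After testing with $y^{\delta}$ and discarding the favourably signed diagonal terms (that part is fine: $\beta_{\delta}(r)r\ge 0$ for all $r$ since $\beta_{\delta}\le 0$ vanishes on $r\ge 0$), you are left with the cross terms $\bigl(\beta_{\delta}(y^{\delta}-\varphi^{\delta}),\varphi^{\delta}\bigr)$ and $\bigl(\beta_{\delta}(\psi^{\delta}-y^{\delta}),\psi^{\delta}\bigr)$, which you bound via the Lipschitz constant of $\beta_{\delta}$, i.e.\ by $\tfrac{C}{\delta}\|y^{\delta}-\varphi^{\delta}\|_{L^{2}}\|\varphi^{\delta}\|_{L^{2}}$. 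Young's inequality then forces a factor $\tfrac{C}{\delta}$ (indeed $\tfrac{C}{\delta^{2}}$ before taking square roots) into the final constant. The hypothesis $\delta\le C$ is an upper bound on $\delta$ and gives no control on $1/\delta$ as $\delta\to 0$, so it cannot rescue this. What your computation actually proves is the estimate of the paper's Proposition 3.1, $\|y^{\delta}\|_{H^{1}}\le \tfrac{c}{\delta}\bigl(\|\varphi^{\delta}\|+\|\psi^{\delta}\|\bigr)+c\|f\|_{L^{2}}$, whereas the lemma claims a constant independent of the approximation parameter (note $\rho_{2}:=C\rho_{1}$ with the paper's convention that $C$ is $\delta$-independent, in contrast with $\tilde{\rho}_{2}:=C+\tfrac{C}{\delta}\tilde{\rho}_{1}$ in Corollary 3.1).

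To get the $\delta$-uniform bound you need a different test function: pair the state equation with $\beta_{\delta}(y^{\delta}-\varphi^{\delta})$ (and separately with $\beta_{\delta}(\psi^{\delta}-y^{\delta})$). The monotonicity of $\beta_{\delta}$ makes $\sigma\bigl(y^{\delta}-\varphi^{\delta},\beta_{\delta}(y^{\delta}-\varphi^{\delta})\bigr)\ge 0$; the cross term $\bigl(\beta_{\delta}(\psi^{\delta}-y^{\delta}),\beta_{\delta}(y^{\delta}-\varphi^{\delta})\bigr)$ vanishes because the two penalizations are supported on the disjoint sets $\{y^{\delta}<\varphi^{\delta}\}$ and $\{y^{\delta}>\psi^{\delta}\}$ (here $\varphi^{\delta}\le\psi^{\delta}$ is used); and the remaining term $\sigma\bigl(\varphi^{\delta},\beta_{\delta}(y^{\delta}-\varphi^{\delta})\bigr)=(A\varphi^{\delta},\beta_{\delta}(y^{\delta}-\varphi^{\delta}))$ is controlled by $\|\varphi^{\delta}\|_{H^{2}}\|\beta_{\delta}(y^{\delta}-\varphi^{\delta})\|_{L^{2}}$ --- this is exactly where the $H^{2}$ hypothesis on the obstacles enters, which your version never uses except through the trivial embedding. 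One concludes $\|\beta_{\delta}(y^{\delta}-\varphi^{\delta})\|_{L^{2}}\le \|f\|_{L^{2}}+C\|\varphi^{\delta}\|_{H^{2}}$ uniformly in $\delta$, and then a single energy estimate on the state equation (now with the penalization terms bounded in $L^{2}$ independently of $\delta$) yields the claimed bound. Your final paragraph (specializing to the ball $B_{H^{2}}(0,\rho_{1})$ and the embedding $H^{2}\hookrightarrow H^{1}$) is fine once the first estimate is repaired, although even then the bound carries an additive $C\|f\|_{L^{2}}$ term that the lemma's statement silently absorbs into $C$.
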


\begin{proof}
\cite{BergLenhBi}
\end{proof}

\begin{lemma}
For any pair $\left( p^{\delta },y^{\delta }\right) $ in
$\mathcal{U}\times \left( \mathcal{U}\cap B_{H^{1}}\left( 0,\rho
_{2}\right) \right) $,
satisfying the optimality system $(S^{\delta }),$ we have%
\begin{equation*}
\parallel p^{\delta }\parallel _{H^{1}}\leq C\parallel y^{\delta }\parallel
_{H^{1}\left( \Omega \right)},
\end{equation*}

and when $y^{\delta }${\small \ }belongs to $B_{H^{1}}\left(
0,\rho _{2}\right) \cap \mathcal{U},$ we deduce that
\begin{equation*}
\parallel p^{\delta }\parallel _{H^{1}\left( \Omega \right) }\leq \rho _{3}.
\end{equation*}%
This means that $p^{\delta }$ belongs to $B_{H^{1}}\left( 0,\rho
_{3}\right) \cap \mathcal{U}$, where $\rho _{3}:=C\rho _{2}$.
\end{lemma}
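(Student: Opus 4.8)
The plan is to derive the estimate directly from the second equation of the optimality system $(S^\delta)$, namely the adjoint equation $A^{*}p^{\delta}+\mu_{1}^{\delta}+\mu_{2}^{\delta}=y^{\delta}-z$ with $p^{\delta}=0$ on $\partial\Omega$. First I would test this equation against $p^{\delta}$ itself, so that the left-hand side produces $\sigma^{*}(p^{\delta},p^{\delta})$ plus the two terms $(\mu_{1}^{\delta},p^{\delta})_{L^{2}}$ and $(\mu_{2}^{\delta},p^{\delta})_{L^{2}}$, while the right-hand side becomes $(y^{\delta}-z,p^{\delta})_{L^{2}}$. By the coercivity assumption $\mathrm{H}_{2}$ for $\sigma^{*}$, the first term is bounded below by $c\|p^{\delta}\|_{H^{1}(\Omega)}^{2}$.

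The key observation is that the penalty terms have a favorable sign. Since $\mu_{1}^{\delta}=\beta_{\delta}^{\prime}(y^{\delta}-\varphi^{\delta})p^{\delta}$ and $\mu_{2}^{\delta}=\beta_{\delta}^{\prime}(\psi^{\delta}-y^{\delta})p^{\delta}$, and since $\beta_{\delta}^{\prime}\geq 0$ everywhere by the explicit formula for $\beta_{\delta}^{\prime}$, we have $(\mu_{1}^{\delta},p^{\delta})_{L^{2}}=\int_{\Omega}\beta_{\delta}^{\prime}(y^{\delta}-\varphi^{\delta})(p^{\delta})^{2}\,dx\geq 0$ and likewise $(\mu_{2}^{\delta},p^{\delta})_{L^{2}}\geq 0$. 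Hence these terms can simply be dropped from the left-hand side, yielding $c\|p^{\delta}\|_{H^{1}(\Omega)}^{2}\leq (y^{\delta}-z,p^{\delta})_{L^{2}}$. Applying the Cauchy--Schwarz inequality and then the continuous embedding $H^{1}(\Omega)\hookrightarrow L^{2}(\Omega)$ gives $(y^{\delta}-z,p^{\delta})_{L^{2}}\leq \|y^{\delta}-z\|_{L^{2}(\Omega)}\|p^{\delta}\|_{L^{2}(\Omega)}\leq C\,\|y^{\delta}-z\|_{L^{2}(\Omega)}\|p^{\delta}\|_{H^{1}(\Omega)}$. Dividing through by $\|p^{\delta}\|_{H^{1}(\Omega)}$ produces $\|p^{\delta}\|_{H^{1}(\Omega)}\leq C\big(\|y^{\delta}\|_{L^{2}(\Omega)}+\|z\|_{L^{2}(\Omega)}\big)\leq C\,\|y^{\delta}\|_{H^{1}(\Omega)}$, where the last step absorbs the fixed data norm $\|z\|_{L^{2}(\Omega)}$ into the generic constant (using that $\|y^{\delta}\|_{H^{1}}$ is itself bounded below away from zero on the relevant set, or more simply by treating $z$ as absorbed since we only need the stated bound up to a generic $C$). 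This establishes the first inequality.

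For the second inequality, I would simply substitute the hypothesis $y^{\delta}\in B_{H^{1}}(0,\rho_{2})\cap\mathcal{U}$ into the bound just obtained, which gives $\|p^{\delta}\|_{H^{1}(\Omega)}\leq C\rho_{2}=:\rho_{3}$, and therefore $p^{\delta}\in B_{H^{1}}(0,\rho_{3})\cap\mathcal{U}$ (the $\mathcal{U}$-membership being already part of the hypothesis that $p^{\delta}\in\mathcal{U}$).

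The main obstacle I anticipate is purely a matter of bookkeeping rather than of substance: one must be careful about how the target datum $z$ is handled so that the final bound genuinely has the form $\|p^{\delta}\|_{H^{1}}\leq C\|y^{\delta}\|_{H^{1}(\Omega)}$ with no additive constant, which is how the lemma is stated. This is consistent with the convention announced earlier in the paper that $C$ denotes a generic constant and that one works on bounded sets; alternatively one phrases it as $\|p^{\delta}\|_{H^{1}}\leq C(\|y^{\delta}\|_{H^{1}}+1)$ and notes it is harmless on the bounded admissible set. A secondary point to verify carefully is the nonnegativity of $\beta_{\delta}^{\prime}$, but this is immediate from the three-case formula ($0$, $-2r\geq 0$ on $[-1/2,0]$, and $1$), so dropping the penalty contributions is rigorously justified. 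Since both $A^{*}$-coercivity and the Poincaré/embedding inequality are available from the standing assumptions $\mathrm{H}_{1}$, $\mathrm{H}_{2}$, there is no remaining analytic difficulty.
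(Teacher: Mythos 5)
Your proposal is correct and follows essentially the same route as the paper: test the adjoint equation with $v=p^{\delta}$, use the coercivity of $\sigma^{*}$, drop the nonnegative penalty terms $(\mu_{i}^{\delta},p^{\delta})=\int_{\Omega}\beta_{\delta}^{\prime}(\cdot)(p^{\delta})^{2}\,dx\geq 0$, and conclude by Cauchy--Schwarz. You in fact supply more detail than the paper does (the sign of $\beta_{\delta}^{\prime}$ and the absorption of $z$ into the generic constant are left implicit there).
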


\begin{proof}
From the adjoint equation of optimality system $(S^{\delta }),$ we
have
\begin{equation*}
\sigma ^{\ast }\left( p^{\delta },v\right) +\left( \mu
_{1}^{\delta }+\mu
_{2}^{\delta },v\right) =\left( y^{\delta }-z,v\right), \text{ for all }v%
\text{ in }H_{0}^{1}\left( \Omega \right)
\end{equation*}

if we take $v=p^{\delta}$, and by the coercivity condition
H\textsubscript{2} of $\sigma
^{\ast }\left(\cdot,\cdot\right)$, we obtain%
\begin{equation*}
\parallel p^{\delta }\parallel _{H^{1}\left( \Omega \right)}\leq C\parallel y^{\delta }\parallel
_{H^{1}\left( \Omega \right)}.
\end{equation*}
\end{proof}

\section{ Convergence study of an iterative algorithm}

In this section, we give an algorithm to solve problem $(P^{\delta
})$. Roughly speaking, we propose an implicit algorithm to solve
the necessary optimality system $(S^{\delta })$. The proposed
algorithm is based on the Gauss-Seidel method and is given below.

\begin{algorithm}[h]
\label{Alg1}%
\caption{Gauss-Seidel algorithm (Continuous version)}
\begin{algorithmic}[1]

\STATE
 \textbf{Input :} $\left\{ y_{0}^{\delta },p_{0}^{\delta },\varphi
_{0}^{\delta },\psi _{0}^{\delta },\lambda _{0}^{\delta },\delta
,\nu ,\varepsilon \right\} $ choose $\varphi
_{0}^{\delta },\psi _{0}^{\delta }$ in $\mathcal{U},\varepsilon $ and $\delta $ in $%
R_{+}^{\ast }$;

\STATE\textbf{Begin:}\\

\STATE \textbf{Solve} $Ay_{n}^{\delta }+\frac{1}{\delta }\left(
\beta \left( y_{n}^{\delta }-\varphi _{n-1}^{\delta }\right)
-\beta \left( \psi _{n-1}^{\delta }-y_{n}^{\delta }\right) \right)
=f\text{ on }y_{n}^{\delta }$

\STATE \textbf{Solve } $\left( A+\left( \beta _{\delta }^{\prime
}\left( y_{n}^{\delta }-\varphi _{n-1}^{\delta }\right) +\beta
_{\delta }^{\prime }\left( \psi _{n-1}^{\delta }-y_{n}^{\delta
}\right) \right) \right) p_{n}^{\delta }=y_{n}^{\delta }-z$ on
$p_{n}^{\delta }$.

 \STATE \textbf{Calculate }   $\lambda _{n}^{\delta }=\nu \Delta \varphi _{n-1}^{\delta
}+\beta _{\delta }^{\prime }\left( y_{n}^{\delta }-\varphi
_{n-1}^{\delta }\right) p_{n}^{\delta }$

\STATE \textbf{Solve } $\nu \Delta \psi _{n}^{\delta }+\beta
_{\delta }^{\prime }\left( \psi _{n}^{\delta }-y_{n}^{\delta
}\right) p_{n}^{\delta }=-\lambda _{n}^{\delta }$ on $\psi
_{n}^{\delta }$.

\STATE \textbf{Solve } $-\lambda _{n}^{\delta }+\nu \Delta \varphi
_{n}^{\delta }+\beta _{\delta }^{\prime }\left( y_{n}^{\delta
}-\varphi _{n}^{\delta }\right)
p_{n}^{\delta }=0$ on $\varphi _{n}^{\delta }.$ \\
\STATE \textbf{If } the stop criteria is fulfilled \textbf{Stop.}

\STATE \textbf{Ensure :} $s_{n}^{\delta }:=\left( y_{n}^{\delta
},\varphi _{n}^{\delta },\psi _{n}^{\delta },p_{n}^{\delta
}\right) $ \textbf{is a solution}

\STATE \ \ \ \ \ \ \ \ \  \textbf{Else; } $n\leftarrow n+1$,
\textbf{Go to} \textbf{Begin.}

\STATE \textbf{End if}

\STATE \textbf{End algorithm}.
\end{algorithmic}
\end{algorithm}
This algorithm can be seen as a successive approximation method to
compute the five points of the function $F$ that we are going to
define. From the
different steps of the above algorithm, we define the following functions $%
F_{i}$, for $i=1,\,2,\,3,\,4$ as

\begin{itemize}
\item From step 1, we define $F_{1}:\mathcal{U}\times \mathcal{U}\rightarrow \mathcal{U}$, such that%

\begin{equation*}
y_{n}^{\delta }:=F_{1}\left( \varphi _{n-1}^{\delta },\psi
_{n-1}^{\delta }\right),  \label{2.1}
\end{equation*}%

we see that $F_{1}$ depends on $\varphi _{n-1}^{\delta },\psi
_{n-1}^{\delta }$, and gives $y_{n}^{\delta }$ as the solution of
the following state equation

\begin{equation}
Ay_{n}^{\delta }+\beta _{\delta }\left( y_{n}^{\delta }-\varphi
_{n-1}^{\delta }\right) -\beta _{\delta }\left( \psi
_{n-1}^{\delta }-y_{n}^{\delta }\right) =f \text{ in } \Omega,\text{ and
}y_{n}^{\delta }=0\text{ on }\partial \Omega . \label{2.2}
\end{equation}%

\item From step 2, we define $F_{2}:\mathcal{U}\times
\mathcal{U}\times \mathcal{U}\rightarrow \mathcal{U}$, such that
\begin{equation}
p_{n}^{\delta }:=F_{2}\left( y_{n}^{\delta },\varphi
_{n-1}^{\delta },\psi _{n-1}^{\delta }\right),  \label{2.3}
\end{equation}%

we see that $F_{2}$ depends on $\varphi _{n-1}^{\delta },\psi
_{n-1}^{\delta }$ and $y_{n}^{\delta }$, and gives $p_{n}^{\delta
}$ as the solution of the following adjoint state equation

\begin{equation}
Ap_{n}^{\delta }+\beta _{\delta }^{\prime }\left( y_{n}^{\delta
}-\varphi _{n-1}^{\delta }\right) p_{n}^{\delta }+\beta _{\delta
}^{\prime }\left( \psi _{n-1}^{\delta }-y_{n}^{\delta }\right)
p_{n}^{\delta }=y_{n}^{\delta }-z \text{ in } \Omega,\text{ and }\text{
}p_{n}^{\delta }=0\text{ on }\text{ }\partial \Omega . \label{2.4}
\end{equation}%

\item From step 3, we define $F_{3}: \mathcal{U}\times \mathcal{U}\rightarrow \mathcal{U}$, such that%

\begin{equation*}
\psi _{n}^{\delta }:=F_{3}\left( y_{n}^{\delta },p_{n}^{\delta
}\right), \label{2..7}
\end{equation*}%

we see that $F_{3}$ depends on $p_{n}^{\delta }$ and
$y_{n}^{\delta }$, and since $\psi _{n}^{\delta }$ is given, we define $\lambda _{n}^{\delta }$   by the following equation

\begin{equation}
-\lambda _{n}^{\delta }=\nu \Delta \psi _{n}^{\delta }+\beta
_{\delta }^{\prime }\left( \psi
_{n}^{\delta }-y_{n}^{\delta }\right) p_{n}^{\delta } \text{ in } \Omega,\text{ and }
\psi _{n}^{\delta }=0\text{ on }\partial \Omega .
\label{2..8}
\end{equation}%

\item From step 4, we define $F_{4}:\mathcal{U}\times \mathcal{ U}\rightarrow\mathcal{ U}$, such that%

\begin{equation*}
\varphi _{n}^{\delta }:=F_{4}\left( y_{n}^{\delta },p_{n}^{\delta
}\right) , \label{2..9}
\end{equation*}%

we see that $F_{4}$ depends on $p_{n}^{\delta }$, and
$y_{n}^{\delta }$, and since $\varphi _{n}^{\delta }$ is given, $\lambda _{n}^{\delta }$ can be also defined by the following equation

\begin{equation}
-\lambda _{n}^{\delta }+\nu \Delta \varphi _{n}^{\delta }+\beta
_{\delta }^{\prime }\left(y_{n}^{\delta }- \varphi
_{n}^{\delta }\right) p_{n}^{\delta }=0\text{ in } \Omega,\text{ and }%
\varphi _{n}^{\delta }=0\text{ on }\partial \Omega .
\label{2..10}
\end{equation}%
\end{itemize}

\begin{remark}
We note that the equation given by \eqref{2..8} is only used to
solve the equation given by \eqref{2..10}.
\end{remark}

Then according the above definitions of $F_{i},$ where
$i=1,\,2,\,3,\,4$, let us define the map $F:\mathcal{U}\times
\mathcal{U}\rightarrow \mathcal{U}\times \mathcal{U}$, as

\begin{equation*}
\left( \varphi _{n}^{\delta },\psi _{n}^{\delta }\right) :=F\left(
\varphi _{n-1}^{\delta },\psi _{n-1}^{\delta }\right) ,
\end{equation*}%

where

\begin{equation*}
 \varphi _{n}^{\delta } :=\tilde{F}_{1}\left( \varphi _{n-1}^{\delta },\psi
_{n-1}^{\delta }\right) :=F_{4}\left( F_{1}\left( \varphi
_{n-1}^{\delta },\psi _{n-1}^{\delta }\right) ,F_{2}\left(
F_{1}\left( \varphi _{n-1}^{\delta },\psi _{n-1}^{\delta }\right)
,\varphi _{n-1}^{\delta },\psi _{n-1}^{\delta }\right) \right),
\end{equation*}

and

\begin{equation*}
\psi _{n}^{\delta } :=\tilde{F}_{2}\left( \varphi _{n-1}^{\delta
},\psi _{n-1}^{\delta }\right):=F_{3}\left( F_{1}\left( \varphi
_{n-1}^{\delta },\psi _{n-1}^{\delta }\right) ,F_{2}\left(
F_{1}\left( \varphi _{n-1}^{\delta },\psi _{n-1}^{\delta }\right)
,\varphi _{n-1}^{\delta },\psi _{n-1}^{\delta
}\right) \right)%
\end{equation*}%

such that

\begin{equation*} F\left( \varphi _{n-1}^{\delta },\psi
_{n-1}^{\delta }\right) =\left(\tilde{F}_{1}\left( \varphi
_{n-1}^{\delta },\psi _{n-1}^{\delta }\right),\tilde{F}_{2}\left(
\varphi _{n-1}^{\delta },\psi _{n-1}^{\delta }\right)\right)
\end{equation*}

\begin{proposition}
 Let $\varphi _{n-1}^{\delta }$ and $\psi
_{n-1}^{\delta }$belong to $\mathcal{U}$ and $\left( y_{n}^{\delta
},\varphi _{n}^{\delta },\psi _{n}^{\delta },p_{n}^{\delta
}\right)$ satisfies equations \eqref{2.2}, \eqref{2.4},
\eqref{2..8} and \eqref{2..10} given respectively by $F_{1}$,
$F_{2}$, $F_{3}$ and $F_{4}$  such that $\delta \leq C$, then we
get

\begin{equation}
\ \parallel y_{n}^{\delta }\parallel _{H^{1}\left( \Omega \right)
}\leq \frac{c}{\delta }\parallel \varphi _{n-1}^{\delta }\parallel
_{H^{2}\left( \Omega \right) }+\frac{c}{\delta }\parallel \psi
_{n-1}^{\delta }\parallel _{H^{2}\left( \Omega \right)
}+c\parallel f\parallel _{L^{2}\left( \Omega \right) } \label{2.9}
\end{equation}%

\begin{equation}
\parallel p_{n}^{\delta }\parallel _{H^{1}\left( \Omega \right) }\leq
C+C\parallel y_{n}^{\delta }\parallel _{{H^{1}\left( \Omega
\right) }} \label{2.10}
\end{equation}

\begin{equation}
\parallel \varphi _{n}^{\delta }\parallel _{H^{2}\left( \Omega \right) }\leq
\tfrac{C}{\delta \nu }\parallel p_{n}^{\delta }\parallel
_{H^{1}\left( \Omega \right) }+C  \label{2.11}
\end{equation}%

and

\begin{equation}
\parallel \psi _{n}^{\delta }\parallel _{H^{2}\left( \Omega \right) }\leq
\tfrac{C}{\delta \nu }\parallel p_{n}^{\delta }\parallel
_{H^{1}\left( \Omega \right) }+C  \label{2..12}
\end{equation}%

\end{proposition}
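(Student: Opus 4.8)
The plan is to read each of the four bounds directly off the equation that defines the corresponding iterate: for \eqref{2.2} and \eqref{2.4} we test against the iterate itself and use the coercivity hypothesis H\textsubscript{2}, while for \eqref{2..10} and \eqref{2..8} we use the $H^{2}$ elliptic‑regularity estimate $\|w\|_{H^{2}(\Omega)}\le C\|\Delta w\|_{L^{2}(\Omega)}$, valid for $w\in\mathcal{U}=H^{2}(\Omega)\cap H_{0}^{1}(\Omega)$. The only facts about the penalty term we need are: (i) $\beta_{\delta}(r)\,r\ge 0$ for every $r$, since $\beta_{\delta}\le 0$ and $\beta_{\delta}\equiv 0$ on $[0,+\infty)$; (ii) $\beta_{\delta}$ is $\tfrac1\delta$-Lipschitz with $\beta_{\delta}(0)=0$, hence $|\beta_{\delta}(r)|\le\tfrac1\delta|r|$ (the mean‑value argument already used in Lemmas \ref{lm:b1}--\ref{lm:b2}); and (iii) $0\le\beta_{\delta}'(r)\le\tfrac1\delta$.

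To get \eqref{2.9}, take $v=y_{n}^{\delta}$ in the weak form of \eqref{2.2}. Splitting $\bigl(\beta_{\delta}(y_{n}^{\delta}-\varphi_{n-1}^{\delta}),y_{n}^{\delta}\bigr)=\bigl(\beta_{\delta}(y_{n}^{\delta}-\varphi_{n-1}^{\delta}),y_{n}^{\delta}-\varphi_{n-1}^{\delta}\bigr)+\bigl(\beta_{\delta}(y_{n}^{\delta}-\varphi_{n-1}^{\delta}),\varphi_{n-1}^{\delta}\bigr)$, and treating the $\psi_{n-1}^{\delta}$-term the same way, fact (i) lets us discard the two sign‑definite contributions, so coercivity of $\sigma$ leaves
\[
c\,\|y_{n}^{\delta}\|_{H^{1}(\Omega)}^{2}\le (f,y_{n}^{\delta})-\bigl(\beta_{\delta}(y_{n}^{\delta}-\varphi_{n-1}^{\delta}),\varphi_{n-1}^{\delta}\bigr)+\bigl(\beta_{\delta}(\psi_{n-1}^{\delta}-y_{n}^{\delta}),\psi_{n-1}^{\delta}\bigr).
\]
Now estimate the penalty factors by fact (ii), apply Cauchy--Schwarz, and use Young's inequality to throw every term carrying a factor $\|y_{n}^{\delta}\|_{L^{2}}\le\|y_{n}^{\delta}\|_{H^{1}(\Omega)}$ onto the left-hand side; what is left is bounded by $C\|f\|_{L^{2}}^{2}+\tfrac C{\delta^{2}}\|\varphi_{n-1}^{\delta}\|_{L^{2}}^{2}+\tfrac C{\delta^{2}}\|\psi_{n-1}^{\delta}\|_{L^{2}}^{2}$, where $\delta\le C$ has been used to dominate the terms with a single power $\tfrac1\delta$ by those with $\tfrac1{\delta^{2}}$. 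Taking square roots and using $\|\cdot\|_{L^{2}(\Omega)}\le C\|\cdot\|_{H^{2}(\Omega)}$ gives \eqref{2.9}.

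For \eqref{2.10}, take $v=p_{n}^{\delta}$ in the weak form of \eqref{2.4}: by fact (iii) the two penalty terms contribute $\int_{\Omega}\bigl(\beta_{\delta}'(y_{n}^{\delta}-\varphi_{n-1}^{\delta})+\beta_{\delta}'(\psi_{n-1}^{\delta}-y_{n}^{\delta})\bigr)(p_{n}^{\delta})^{2}\,dx\ge 0$, so coercivity gives $c\|p_{n}^{\delta}\|_{H^{1}(\Omega)}^{2}\le(y_{n}^{\delta}-z,p_{n}^{\delta})\le(\|y_{n}^{\delta}\|_{L^{2}}+\|z\|_{L^{2}})\|p_{n}^{\delta}\|_{H^{1}(\Omega)}$; dividing by $\|p_{n}^{\delta}\|_{H^{1}(\Omega)}$ and absorbing the fixed datum $\|z\|_{L^{2}}$ into $C$ gives \eqref{2.10}. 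For \eqref{2.11}, rewrite \eqref{2..10} as $\nu\Delta\varphi_{n}^{\delta}=\lambda_{n}^{\delta}-\beta_{\delta}'(y_{n}^{\delta}-\varphi_{n}^{\delta})p_{n}^{\delta}$ with $\lambda_{n}^{\delta}=\nu\Delta\varphi_{n-1}^{\delta}+\beta_{\delta}'(y_{n}^{\delta}-\varphi_{n-1}^{\delta})p_{n}^{\delta}$ (the value produced at the previous step of the algorithm); fact (iii) then yields $\nu\|\Delta\varphi_{n}^{\delta}\|_{L^{2}(\Omega)}\le\nu\|\varphi_{n-1}^{\delta}\|_{H^{2}(\Omega)}+\tfrac2\delta\|p_{n}^{\delta}\|_{H^{1}(\Omega)}$, and dividing by $\nu$, applying the elliptic estimate, and using $\|\varphi_{n-1}^{\delta}\|_{H^{2}(\Omega)}\le R$ (as $\varphi_{n-1}^{\delta}\in\mathcal{U}_{ad}=\mathcal{B}_{H^{2}}(0,R)$) to absorb that term into the additive constant gives \eqref{2.11}. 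Estimate \eqref{2..12} follows identically from \eqref{2..8}, i.e. from $\nu\Delta\psi_{n}^{\delta}=-\lambda_{n}^{\delta}-\beta_{\delta}'(\psi_{n}^{\delta}-y_{n}^{\delta})p_{n}^{\delta}$.

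The delicate step is \eqref{2.9}: since $\beta_{\delta}$ grows linearly rather than being bounded, neither the sign property (i) nor the growth bound (ii) suffices alone — (i) is used to kill the sign‑definite penalty terms and (ii) to control what remains — and the hypothesis $\delta\le C$ is precisely what allows the powers of $\tfrac1\delta$ to be homogenized before taking square roots. By contrast, \eqref{2.10}--\eqref{2..12} involve no real difficulty: the boundedness of $\beta_{\delta}'$ is what keeps $\varphi_{n}^{\delta}$ (respectively $\psi_{n}^{\delta}$) from reappearing on the right-hand side, and the $H^{2}$‑boundedness of $\mathcal{U}_{ad}$ is what turns $\nu\|\varphi_{n-1}^{\delta}\|_{H^{2}(\Omega)}$ into the harmless constant $C$.
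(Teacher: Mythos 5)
Your proof is correct and follows essentially the same route as the paper: test the state equation with $y_{n}^{\delta}$ and the adjoint equation with $p_{n}^{\delta}$, discard the sign-definite penalty contributions, control the remainder via the $\tfrac1\delta$-Lipschitz bound on $\beta_{\delta}$ and the bound $0\le\beta_{\delta}'\le\tfrac1\delta$, and read \eqref{2.11}--\eqref{2..12} off the $\lambda_{n}^{\delta}$-equation together with $H^{2}$ elliptic regularity. The only (harmless) difference is that you make explicit two points the paper leaves implicit, namely the Young-inequality absorption that turns the quadratic inequality into \eqref{2.9} and the use of the $H^{2}$-bound on $\varphi_{n-1}^{\delta}$ to convert $C\|\varphi_{n-1}^{\delta}\|_{H^{2}(\Omega)}$ into the additive constant $C$.
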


\begin{proof}

From the state equation \eqref{2.2}, we obtain%

\begin{multline}
c\parallel y_{n}^{\delta }\parallel _{H^{1}\left( \Omega \right)
}^{2}\leq \frac{c}{\delta }\left( c\parallel y_{n}^{\delta
}\parallel _{H^{1}\left( \Omega \right) }+\parallel \varphi
_{n-1}^{\delta }\parallel _{L^{2}\left( \Omega \right) }\right)
\parallel \varphi _{n-1}^{\delta }\parallel _{L^{2}\left( \Omega
\right) }+  \\ +\frac{c}{\delta }\left( \parallel \psi
_{n-1}^{\delta }\parallel _{L^{2}\left( \Omega \right)
}+c\parallel y_{n}^{\delta }\parallel _{H^{1}\left( \Omega \right)
}\right) \parallel \psi _{n-1}^{\delta }\parallel _{L^{2}\left(
\Omega \right) }+c\parallel f\parallel _{L^{2}\left( \Omega
\right) }\parallel y_{n}^{\delta }\parallel _{H^{1}\left( \Omega
\right) } \label{IV2.12}
\end{multline}

Then from the above inequality \eqref{IV2.12}, we deduce
\begin{equation*}
\parallel y_{n}^{\delta }\parallel _{H^{1}\left( \Omega \right) }\leq
\left( \frac{c}{\delta }\parallel \varphi _{n-1}^{\delta
}\parallel _{L^{2}\left( \Omega \right) }+\frac{c}{\delta
}\parallel \psi _{n-1}^{\delta }\parallel _{L^{2}\left( \Omega
\right) }+c\parallel f\parallel _{L^{2}\left( \Omega \right)
}\right).
\end{equation*}

Now, from the adjoint state equation \eqref{2.4}, by the coercivity condition of $\sigma ^{\ast }\left(
\cdot,\cdot\right) $ given by H\textsubscript{2}, we obtain that%

\begin{equation*}
\parallel p_{n}^{\delta }\parallel _{H^{1}\left( \Omega \right) }\leq
C\parallel y_{n}^{\delta }\parallel _{{H^{1}\left( \Omega \right)
}},
\end{equation*}%

by using the following equation

\begin{equation*}
\lambda_{n}^{\delta }=\nu\Delta\varphi _{n-1}^{\delta } + \beta
_{\delta }^{\prime }\left( y_{n}^{\delta }-\varphi _{n-1}^{\delta
}\right) p_{n}^{\delta }\text{ and }\text{ }\varphi_{n}^{\delta
}=0\text{ on } \partial \Omega
\end{equation*}%

we deduce that

\begin{equation*}
\parallel \lambda _{n}^{\delta }\parallel _{H^{1}\left( \Omega \right) }\leq
\tfrac{C}{\delta  }\parallel p_{n}^{\delta }\parallel
_{H^{1}\left( \Omega \right) }+C \nu \parallel
\varphi_{n-1}^{\delta }\parallel _{H^{2}\left( \Omega \right)}.
\end{equation*}%

 From equation \eqref{2..8}, and by the coercivity condition H\textsubscript{2} of
$\sigma\left(\cdot,\cdot\right) $, and the definition of $\beta
_{\delta }^{\prime }\left( \cdot\right)$, we obtain

\begin{equation*}
\parallel \psi _{n}^{\delta }\parallel _{H^{2}\left( \Omega \right) }\leq \tfrac{C}{\delta \nu }\parallel p_{n}^{\delta }\parallel
_{H^{1}\left( \Omega \right) }+C\parallel \varphi_{n-1}^{\delta
}\parallel _{H^{2}\left( \Omega \right) }.
\end{equation*}%

Using equation \eqref{2..10}, and by the coercivity condition
H\textsubscript{2} of $\sigma\left( .,.\right) $, and the
definition of $\beta _{\delta }^{\prime }\left( \cdot\right)$, we
get

\begin{equation}
\parallel \varphi _{n}^{\delta }\parallel _{H^{2}\left( \Omega \right) }\leq \tfrac{C}{\delta \nu }\parallel p_{n}^{\delta }\parallel
_{H^{1}\left( \Omega \right) }+C\parallel \varphi_{n-1}^{\delta
}\parallel _{H^{2}\left( \Omega \right) }.
\label{c1}
\end{equation}%
\end{proof}

\begin{corollary}\label{cor:c1}
Since $\varphi _{n-1}^{\delta }$ and $\psi _{n-1}^{\delta }$ belong to $%
B_{H^{2}}\left( 0,\tilde{\rho}_{1}\right) \cap \mathcal{U}$, and
letting $\left( y_{n}^{\delta },\varphi _{n-1}^{\delta },\psi
_{n-1}^{\delta }\right)$ belong to $\tilde{ \mathcal{U}}$ to
satisfy the conditions \eqref{2.2}, \eqref{2..8} and \eqref{2..10}
given respectively by $F_{1}$, $F_{3}$ and $F_{4}$  such that
$\delta \leq C$, then we get

\begin{equation*}
\parallel y_{n}^{\delta }\parallel _{H^{1}\left( \Omega \right) }\leq \tilde{%
\rho}_{2}
\end{equation*}%

This means that $y_{n}^{\delta }$ belongs to $B_{H^{1}}\left( 0,\tilde{\rho}%
_{2}\right) \cap \mathcal{U},$ where $\tilde{\rho}_{2}:=\left( C+\dfrac{C}{\delta }%
\tilde{\rho}_{1}\right)$.
\end{corollary}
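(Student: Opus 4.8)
The plan is to combine the a priori estimates established in the preceding Proposition with the boundedness hypothesis on $\varphi_{n-1}^{\delta}$ and $\psi_{n-1}^{\delta}$, together with the earlier lemma giving $\parallel p_{n}^{\delta}\parallel_{H^{1}}\leq C\parallel y_{n}^{\delta}\parallel_{H^{1}}$, so as to close a loop of inequalities and solve for $\parallel y_{n}^{\delta}\parallel_{H^{1}}$. The starting point is estimate \eqref{2.9}, which bounds $\parallel y_{n}^{\delta}\parallel_{H^{1}(\Omega)}$ by $\tfrac{c}{\delta}\parallel\varphi_{n-1}^{\delta}\parallel_{H^{2}(\Omega)}+\tfrac{c}{\delta}\parallel\psi_{n-1}^{\delta}\parallel_{H^{2}(\Omega)}+c\parallel f\parallel_{L^{2}(\Omega)}$.

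First I would invoke the hypothesis $\varphi_{n-1}^{\delta},\psi_{n-1}^{\delta}\in B_{H^{2}}(0,\tilde{\rho}_{1})\cap\mathcal{U}$, which means $\parallel\varphi_{n-1}^{\delta}\parallel_{H^{2}(\Omega)}\leq\tilde{\rho}_{1}$ and $\parallel\psi_{n-1}^{\delta}\parallel_{H^{2}(\Omega)}\leq\tilde{\rho}_{1}$. Substituting these two bounds into \eqref{2.9} immediately yields
\begin{equation*}
\parallel y_{n}^{\delta}\parallel_{H^{1}(\Omega)}\leq\frac{2c}{\delta}\tilde{\rho}_{1}+c\parallel f\parallel_{L^{2}(\Omega)}.
\end{equation*}
Then I would absorb the fixed source term $c\parallel f\parallel_{L^{2}(\Omega)}$ into a generic constant $C$ (since $f\in L^{2}(\Omega)$ is fixed data) and, rescaling the constant $c$ appearing in front of $\tilde{\rho}_{1}$ to a generic $C$, rewrite the right-hand side as $C+\tfrac{C}{\delta}\tilde{\rho}_{1}$. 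Setting $\tilde{\rho}_{2}:=C+\tfrac{C}{\delta}\tilde{\rho}_{1}$ gives the claimed bound $\parallel y_{n}^{\delta}\parallel_{H^{1}(\Omega)}\leq\tilde{\rho}_{2}$, and since $y_{n}^{\delta}=F_{1}(\varphi_{n-1}^{\delta},\psi_{n-1}^{\delta})\in\mathcal{U}$ by construction of $F_{1}$, we conclude $y_{n}^{\delta}\in B_{H^{1}}(0,\tilde{\rho}_{2})\cap\mathcal{U}$.

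This corollary is essentially a bookkeeping consequence of Proposition~3.1, so there is no genuine obstacle; the only point requiring a little care is tracking which estimate in the Proposition actually matters here — the relevant one is \eqref{2.9}, which depends only on $F_{1}$ (the state equation \eqref{2.2}) and on the $H^{2}$-norms of the incoming controls, so the hypotheses of the corollary (that the triple satisfies \eqref{2.2}, \eqref{2..8}, \eqref{2..10}) already supply everything needed. I would also remark that the condition $\delta\leq C$ is used only to keep the generic constants uniform in the stated form, exactly as in the Proposition.
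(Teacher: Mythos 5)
Your proof is correct and coincides with the paper's argument, which simply declares the corollary obvious from \eqref{2.9}: one substitutes $\parallel\varphi_{n-1}^{\delta}\parallel_{H^{2}}\leq\tilde{\rho}_{1}$ and $\parallel\psi_{n-1}^{\delta}\parallel_{H^{2}}\leq\tilde{\rho}_{1}$ into \eqref{2.9} and absorbs $c\parallel f\parallel_{L^{2}}$ into the generic constant $C$. The only superfluous element is your opening remark about closing a loop via the adjoint-state estimate $\parallel p_{n}^{\delta}\parallel_{H^{1}}\leq C\parallel y_{n}^{\delta}\parallel_{H^{1}}$, which your argument (rightly) never uses.
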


\begin{proof}
It's obvious by using \eqref{2.9}.
\end{proof}

\begin{corollary}\label{cor:c3}
Since the hypotheses of Corollary \ref{cor:c1}  are fulfilled, and
by letting $ (y_{n}^{\delta },p _{n}^{\delta })$ $\in$
$H^1(\Omega)\times H^1(\Omega)$ to satisfy the
conditions \eqref{2.2}, \eqref{2.4} given respectively by $%
F_{1}$, $F_{2}$, we get

\begin{equation*}
\parallel p_{n}^{\delta }\parallel _{H^{1}\left( \Omega \right) }\leq \tilde{%
\rho}_{3}  \label{2.13}
\end{equation*}

This means that $p_{n}^{\delta }$ belongs to $B_{H^{1}}\left( 0,\tilde{\rho}%
_{3}\right) \cap \mathcal{U},$ where
$\tilde{\rho}_{3}:=C\tilde{\rho}_{2}$.
\end{corollary}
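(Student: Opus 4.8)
The plan is to read off the estimate directly from inequality \eqref{2.10} of the Proposition, which already gives
$\parallel p_{n}^{\delta }\parallel _{H^{1}\left( \Omega \right) }\leq C+C\parallel y_{n}^{\delta }\parallel _{H^{1}\left( \Omega \right) }$,
so the only thing to do is to feed in the bound on $\parallel y_{n}^{\delta }\parallel _{H^{1}(\Omega)}$ coming from the previous corollary and collect constants. First I would invoke Corollary \ref{cor:c1}: since the hypotheses carry over (the pair $\varphi_{n-1}^{\delta},\psi_{n-1}^{\delta}$ lies in $B_{H^{2}}(0,\tilde{\rho}_{1})\cap\mathcal{U}$, $\delta\leq C$, and $y_{n}^{\delta}$ solves \eqref{2.2} via $F_{1}$), that corollary yields $\parallel y_{n}^{\delta }\parallel _{H^{1}(\Omega)}\leq \tilde{\rho}_{2}$ with $\tilde{\rho}_{2}=C+\tfrac{C}{\delta}\tilde{\rho}_{1}$, so in particular $y_{n}^{\delta}\in B_{H^{1}}(0,\tilde{\rho}_{2})\cap\mathcal{U}$.

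Next I would substitute this into \eqref{2.10}, obtaining
$\parallel p_{n}^{\delta }\parallel _{H^{1}(\Omega)}\leq C+C\tilde{\rho}_{2}$.
Since the generic constant $C$ absorbs additive and multiplicative constants and $\tilde{\rho}_{2}\geq C$ (it is a sum of positive constants, the first of which is a generic $C$), the right-hand side is bounded by a constant multiple of $\tilde{\rho}_{2}$; setting $\tilde{\rho}_{3}:=C\tilde{\rho}_{2}$ gives $\parallel p_{n}^{\delta }\parallel _{H^{1}(\Omega)}\leq\tilde{\rho}_{3}$, which is precisely the claim, and hence $p_{n}^{\delta}\in B_{H^{1}}(0,\tilde{\rho}_{3})\cap\mathcal{U}$. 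The membership in $\mathcal{U}$ is automatic because $p_{n}^{\delta}$ is obtained from $F_{2}$ as the solution of the linear elliptic equation \eqref{2.4} with $L^{2}$ right-hand side $y_{n}^{\delta}-z$ and a bounded nonnegative zero-order coefficient, so elliptic regularity places it in $H^{2}(\Omega)\cap H^{1}_{0}(\Omega)=\mathcal{U}$.

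There is essentially no obstacle here: the corollary is a bookkeeping consequence of two results already proved, and the only mild subtlety is the constant-chasing convention — one must be comfortable writing $C+C\tilde{\rho}_{2}\leq C\tilde{\rho}_{2}$ under the paper's standing agreement that $C$ denotes a generic constant that may change from line to line and that all the $\tilde{\rho}_{i}$ are bounded below by such a constant. If one wanted to be scrupulous, the single line to justify is that $\tilde{\rho}_{2}=C+\tfrac{C}{\delta}\tilde{\rho}_{1}\geq C$, so $C+C\tilde{\rho}_{2}\leq 2C\tilde{\rho}_{2}$, and $2C$ is again a generic $C$. I would therefore keep the proof to two or three lines: cite Corollary \ref{cor:c1} for the $y_{n}^{\delta}$ bound, plug it into \eqref{2.10}, and rename the resulting constant $\tilde{\rho}_{3}$.
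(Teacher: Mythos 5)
Your proposal is correct and follows essentially the same route as the paper: the paper's proof simply cites inequality \eqref{2.10} together with the bound $\parallel y_{n}^{\delta }\parallel _{H^{1}(\Omega)}\leq \tilde{\rho}_{2}$ from Corollary \ref{cor:c1} (the paper's reference to \eqref{c1} there appears to be a citation slip), and your constant-chasing to absorb $C+C\tilde{\rho}_{2}$ into $C\tilde{\rho}_{2}$ is exactly the intended bookkeeping.
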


\begin{proof}
It's obvious from inequalities \eqref{2.10} and \eqref{c1}.
\end{proof}

\begin{corollary}
Since the hypotheses of corollary $\ref{cor:c1} $ are fulfilled,
and by letting $\left( y_{n}^{\delta },\varphi _{n}^{\delta },\psi
_{n}^{\delta }\right) $ in $\tilde{\mathcal{U}}$ to satisfy the
conditions \eqref{2.2}, \eqref{2..8} and \eqref{2..10} given respectively by $%
F_{1}$, $F_{3}$ and $F_{4}$, we get

\begin{equation*}
\parallel \varphi _{n}^{\delta }\parallel _{H^{2}\left( \Omega \right) }\leq
\tilde{\rho}_{4}.  \label{2.14}
\end{equation*}%

and%

\begin{equation*}
\parallel \psi _{n}^{\delta }\parallel _{H^{2}\left( \Omega \right) }\leq
\tilde{\rho}_{4}.
\end{equation*}

This means that $\varphi _{n}^{\delta }$ and $\psi _{n}^{\delta }$
belong respectively to $\mathcal{B}_{H^{2}}\left(
0,\tilde{\rho}_{4}\right) \cap \mathcal{U}$, and
$\tilde{\rho}_{4}:=\dfrac{C}{\delta \nu
}\tilde{\rho}_{3}+C\tilde{\rho}_{1}$, where $\tilde{\rho}_{3}$,
$\tilde{\rho}_{2}$ are given respectively by corollaries
$\ref{cor:c3}$ and \ref{cor:c1}.
\end{corollary}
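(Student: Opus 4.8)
The plan is to obtain both bounds by specializing the estimates of the Proposition (in their refined form \eqref{c1} and \eqref{2..12}) to the case where the previous iterates $\varphi_{n-1}^{\delta},\psi_{n-1}^{\delta}$ already lie in a fixed $H^{2}$-ball and the adjoint state $p_{n}^{\delta}$ has already been controlled by Corollary \ref{cor:c3}. Nothing new has to be proved; the work is to check that the chain of hypotheses closes, so that the resulting constant is genuinely independent of $n$.

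First I would apply the Proposition with $\varphi_{n-1}^{\delta},\psi_{n-1}^{\delta}$ in $\mathcal B_{H^{2}}(0,\tilde\rho_{1})\cap\mathcal U$ and $\delta\le C$. Steps $F_{3}$ and $F_{4}$, that is equations \eqref{2..8} and \eqref{2..10}, are Poisson-type problems with homogeneous Dirichlet data; using the coercivity of $\sigma(\cdot,\cdot)$ together with the pointwise bound $|\beta_{\delta}'(\cdot)|\le 1/\delta$ in the $H^{2}$-elliptic estimate for the Laplacian, they yield
\begin{equation*}
\parallel\varphi_{n}^{\delta}\parallel_{H^{2}(\Omega)}\le\frac{C}{\delta\nu}\parallel p_{n}^{\delta}\parallel_{H^{1}(\Omega)}+C\parallel\varphi_{n-1}^{\delta}\parallel_{H^{2}(\Omega)},\qquad \parallel\psi_{n}^{\delta}\parallel_{H^{2}(\Omega)}\le\frac{C}{\delta\nu}\parallel p_{n}^{\delta}\parallel_{H^{1}(\Omega)}+C\parallel\varphi_{n-1}^{\delta}\parallel_{H^{2}(\Omega)},
\end{equation*}
which are precisely \eqref{c1} and its counterpart for $\psi_{n}^{\delta}$.

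Next I would bound the two right-hand sides by $n$-independent quantities. The hypothesis of Corollary \ref{cor:c1} gives $\parallel\varphi_{n-1}^{\delta}\parallel_{H^{2}(\Omega)}\le\tilde\rho_{1}$; and since those same hypotheses are exactly what makes Corollary \ref{cor:c3} applicable to the pair $(y_{n}^{\delta},p_{n}^{\delta})$ produced by $F_{1}$ and $F_{2}$, we also get $\parallel p_{n}^{\delta}\parallel_{H^{1}(\Omega)}\le\tilde\rho_{3}$. Substituting these into the two displayed inequalities,
\begin{equation*}
\parallel\varphi_{n}^{\delta}\parallel_{H^{2}(\Omega)}\le\frac{C}{\delta\nu}\tilde\rho_{3}+C\tilde\rho_{1}=:\tilde\rho_{4},\qquad \parallel\psi_{n}^{\delta}\parallel_{H^{2}(\Omega)}\le\frac{C}{\delta\nu}\tilde\rho_{3}+C\tilde\rho_{1}=\tilde\rho_{4}.
\end{equation*}
Since $\varphi_{n}^{\delta}$ and $\psi_{n}^{\delta}$ are constructed by $F_{4}$ and $F_{3}$ as $H^{2}(\Omega)$-functions vanishing on $\partial\Omega$, this says exactly that they lie in $\mathcal B_{H^{2}}(0,\tilde\rho_{4})\cap\mathcal U$, with $\tilde\rho_{4}=\tfrac{C}{\delta\nu}\tilde\rho_{3}+C\tilde\rho_{1}$.

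The step that needs care — and is really the only substantive point — is the verification that the hypotheses propagate correctly: one must make sure that the single assumption ``$\varphi_{n-1}^{\delta},\psi_{n-1}^{\delta}\in\mathcal B_{H^{2}}(0,\tilde\rho_{1})$ and $\delta\le C$'' is simultaneously enough to run the $H^{1}$-estimate for $y_{n}^{\delta}$ (Corollary \ref{cor:c1}) and then the $H^{1}$-estimate for $p_{n}^{\delta}$ (Corollary \ref{cor:c3}), so that $\tilde\rho_{4}$ depends only on $\delta,\nu,\tilde\rho_{1}$ and on the fixed data, and not on the iteration index $n$. Beyond that bookkeeping, the argument is a direct substitution, with no analytic obstacle past the elliptic $H^{2}$-regularity already invoked in the Proposition.
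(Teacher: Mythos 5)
Your proposal is correct and follows essentially the same route as the paper: the paper's proof simply cites the Proposition's bounds \eqref{2.11}--\eqref{2..12} (in the refined form with the $C\|\varphi_{n-1}^{\delta}\|_{H^{2}}$ term) together with the bound $\|p_{n}^{\delta}\|_{H^{1}}\leq\tilde{\rho}_{3}$ from Corollary \ref{cor:c3}, exactly the substitution you carry out. Your extra remark about checking that the hypotheses propagate through $F_{1}$ and $F_{2}$ so that $\tilde{\rho}_{4}$ is independent of $n$ is a sound (if implicit in the paper) observation, not a deviation.
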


\begin{proof}
It's obvious from inequalities \eqref{2.11}, \eqref{2..12} and \eqref{2.13}.
\end{proof}

Let us give the following theorem to show that the mapping $F$ is
locally Lipschitz.

\begin{theorem}\label{thr1}
If $\delta \leq C$, then the mapping $F$ is locally Lipschitz from
$\mathcal{B}_{H^{2}}\left( 0,\tilde{\rho}_{1}\right) \cap
\mathcal{U}$ $\times
\mathcal{B}_{H^{2}}\left( 0,\tilde{\rho}_{1}\right) \cap \mathcal{U}$ to $\mathcal{B}_{H^{2}}\left( 0,%
\tilde{\rho}_{4}\right) \cap \mathcal{U}\times \mathcal{B}_{H^{2}}\left( 0,\tilde{\rho}%
_{4}\right) \cap \mathcal{U},$ with the Lipschitz constant $%
l:=l_{1}(l_{3}+l_{4})+l_{2}(l_{3}+l_{4})+l_{1}l_{2}(l_{3}+l_{4})$, where $\tilde{\rho}_{4}=\dfrac{C}{%
\delta \nu }+(\dfrac{C}{\delta ^{2}\nu }+C)\tilde{\rho}_{1},$ $l_{1}:=\dfrac{C}{%
\delta },$ $l_{2}:=\left( C+\dfrac{C\tilde{\rho}_{3}}{\delta }\right) ,$ $%
l_{3}=l_{4}:=\dfrac{C}{\delta \nu C-C\tilde{\rho}_{3}}$, and
$\tilde{\rho}_{3}$ is given by Corollary \ref{cor:c3}.
\end{theorem}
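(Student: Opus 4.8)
The plan is to prove Lipschitz continuity of $F=(\tilde F_1,\tilde F_2)$ by decomposing each component along the composition chain $F_4\circ(F_1,F_2\circ F_1)$ and $F_3\circ(F_1,F_2\circ F_1)$, and estimating the Lipschitz constant of each link in turn. First I would fix two input pairs $(\varphi^\delta_{1,n-1},\psi^\delta_{1,n-1})$ and $(\varphi^\delta_{2,n-1},\psi^\delta_{2,n-1})$ in $\mathcal B_{H^2}(0,\tilde\rho_1)\cap\mathcal U$, and denote by $y^\delta_{i,n}$, $p^\delta_{i,n}$, $\psi^\delta_{i,n}$, $\varphi^\delta_{i,n}$ the corresponding iterates produced by \eqref{2.2}, \eqref{2.4}, \eqref{2..8}, \eqref{2..10}. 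By Corollaries \ref{cor:c1} and \ref{cor:c3} all these iterates live in the appropriate balls, so the constants $\tilde\rho_2,\tilde\rho_3,\tilde\rho_4$ are available throughout.

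\textbf{Step 1 (the map $F_1$).} Subtract the two state equations \eqref{2.2}, test against $y^\delta_{2,n}-y^\delta_{1,n}$, use coercivity H\textsubscript{2} of $\sigma$ and the monotonicity/Lipschitz bound on $\beta_\delta(\cdot-\varphi)-\beta_\delta(\psi-\cdot)$ from Lemma \ref{lm:b2}. This yields
\begin{equation*}
\parallel y^\delta_{2,n}-y^\delta_{1,n}\parallel_{H^1(\Omega)}\leq l_1\big(\parallel\varphi^\delta_{2,n-1}-\varphi^\delta_{1,n-1}\parallel_{L^2(\Omega)}+\parallel\psi^\delta_{2,n-1}-\psi^\delta_{1,n-1}\parallel_{L^2(\Omega)}\big),
\end{equation*}
with $l_1=C/\delta$, exactly as in the Lipschitz theorem for $\mathcal T^\delta$ proved earlier.

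\textbf{Step 2 (the map $F_2$).} Subtract the two adjoint equations \eqref{2.4}, test against $p^\delta_{2,n}-p^\delta_{1,n}$, use coercivity H\textsubscript{2} of $\sigma^*$, bound the right-hand side $y^\delta_{2,n}-y^\delta_{1,n}$, and control the difference $\beta'_\delta(y^\delta_{2,n}-\varphi^\delta_{2,n-1})p^\delta_{2,n}-\beta'_\delta(y^\delta_{1,n}-\varphi^\delta_{1,n-1})p^\delta_{1,n}$ (together with the $\psi$-term) via Lemma \ref{lm:b1}, where the factor $\tilde\rho_3$ enters because $p^\delta_{1,n}\in\mathcal B_{H^1}(0,\tilde\rho_3)$. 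Absorbing the term in $\parallel p^\delta_{2,n}-p^\delta_{1,n}\parallel$ into the left and combining with Step 1 gives
\begin{equation*}
\parallel p^\delta_{2,n}-p^\delta_{1,n}\parallel_{H^1(\Omega)}\leq l_2\big(\parallel y^\delta_{2,n}-y^\delta_{1,n}\parallel_{H^1(\Omega)}+\parallel\varphi^\delta_{2,n-1}-\varphi^\delta_{1,n-1}\parallel_{L^2(\Omega)}+\parallel\psi^\delta_{2,n-1}-\psi^\delta_{1,n-1}\parallel_{L^2(\Omega)}\big),
\end{equation*}
with $l_2=C+C\tilde\rho_3/\delta$.

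\textbf{Step 3 (the maps $F_3$ and $F_4$).} For $\varphi^\delta_{i,n}$, subtract the two copies of \eqref{2..10}; the $\lambda^\delta$-terms cancel once one writes $\lambda^\delta_{i,n}=\nu\Delta\varphi^\delta_{i,n-1}+\beta'_\delta(y^\delta_{i,n}-\varphi^\delta_{i,n-1})p^\delta_{i,n}$ from Step 5 of the algorithm. One is left with an elliptic problem in $\varphi^\delta_{2,n}-\varphi^\delta_{1,n}$ whose source involves $p^\delta_{2,n}-p^\delta_{1,n}$ and $y^\delta_{2,n}-y^\delta_{1,n}$; testing against $\varphi^\delta_{2,n}-\varphi^\delta_{1,n}$, using coercivity, and using $\beta'_\delta$ boundedness yields an $H^2$ bound of the form $l_4(\parallel p^\delta_{2,n}-p^\delta_{1,n}\parallel_{H^1}+\dots)$ with $l_4=C/(\delta\nu C-C\tilde\rho_3)$ after absorbing a $\tilde\rho_3$-term from $\beta'_\delta$ variation. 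The same argument applies verbatim to $\psi^\delta_{i,n}$ via \eqref{2..8}, giving $l_3=l_4$. Finally, I would chain Steps 1–3: substitute the $y$-bound into the $p$-bound, then both into the $\varphi$- and $\psi$-bounds, to get
\begin{equation*}
\parallel\varphi^\delta_{2,n}-\varphi^\delta_{1,n}\parallel_{H^2}+\parallel\psi^\delta_{2,n}-\psi^\delta_{1,n}\parallel_{H^2}\leq l\big(\parallel\varphi^\delta_{2,n-1}-\varphi^\delta_{1,n-1}\parallel_{H^2}+\parallel\psi^\delta_{2,n-1}-\psi^\delta_{1,n-1}\parallel_{H^2}\big),
\end{equation*}
with $l=l_1(l_3+l_4)+l_2(l_3+l_4)+l_1l_2(l_3+l_4)$, the three terms corresponding to the three ways $y$- and $p$-differences feed into $F_3,F_4$ (directly through $y$, directly through $p$, and through $p$ via $y$). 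The continuous embeddings $H^2\hookrightarrow H^1\hookrightarrow L^2$ are used freely to pass between the norms.

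The main obstacle is bookkeeping in Step 3: one must verify that the $\lambda^\delta$ terms genuinely cancel in the difference equation (this relies on the precise definition of $\lambda^\delta_n$ in the algorithm, not on $\lambda$ being a fixed datum), and that the absorption producing the denominator $\delta\nu C-C\tilde\rho_3$ in $l_4$ is legitimate, i.e. that $\delta$, $\nu$ and $\tilde\rho_1$ are in the regime where this quantity is positive. This is exactly the ``technical difficulty to get a numerical solution'' alluded to in the introduction, and it is where the hypothesis $\delta\leq C$ together with the a priori balls from the Corollaries must be invoked carefully; everything else is a routine application of coercivity and the two preparatory Lemmas.
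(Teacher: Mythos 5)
Your proposal follows essentially the same route as the paper: the paper proves the theorem by establishing the Lipschitz constants $l_1,l_2,l_3,l_4$ of $F_1,F_2,F_3,F_4$ in four separate lemmas (using the state equation with Lemma \ref{lm:b2}, the adjoint equation with Lemma \ref{lm:b1}, and the two elliptic equations for $\varphi$ and $\psi$ with the absorption producing the denominator $\delta\nu C-C\tilde{\rho}_{3}$), and then chains them exactly as you do to obtain $l=l_{1}(l_{3}+l_{4})+l_{2}(l_{3}+l_{4})+l_{1}l_{2}(l_{3}+l_{4})$. Your explicit remark on how the $\lambda_{n}^{\delta}$ terms are eliminated and on the positivity of $\delta\nu C-C\tilde{\rho}_{3}$ matches the conditions the paper imposes in Lemmas \ref{lm:l3} and \ref{lm:l4}, so the argument is the same in substance.
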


To prove the previous theorem, we need the followings Lemmas.

\begin{lemma}\label{lm:l1}
The function $F_{1}$ defined by \eqref{2.1} is Lipschitz
continuous from $\mathcal{U}$ to $\mathcal{U}$, with a Lipschitz constant $l_{1}:=\dfrac{C}{%
\delta }.$

\end{lemma}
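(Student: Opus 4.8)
The plan is to take two input pairs $\left(\varphi_{1},\psi_{1}\right)$ and $\left(\varphi_{2},\psi_{2}\right)$ in $\mathcal{U}$, denote by $y_{i}=F_{1}\left(\varphi_{i},\psi_{i}\right)$ the corresponding solutions of the state equation \eqref{2.2}, and estimate $\parallel y_{2}-y_{1}\parallel_{H^{1}\left(\Omega\right)}$ in terms of $\parallel \varphi_{2}-\varphi_{1}\parallel_{L^{2}\left(\Omega\right)}$ and $\parallel \psi_{2}-\psi_{1}\parallel_{L^{2}\left(\Omega\right)}$. First I would subtract the two copies of \eqref{2.2}, obtaining
\begin{equation*}
A\left(y_{2}-y_{1}\right)+\left(\beta_{\delta}\left(y_{2}-\varphi_{2}\right)-\beta_{\delta}\left(y_{1}-\varphi_{1}\right)\right)-\left(\beta_{\delta}\left(\psi_{2}-y_{2}\right)-\beta_{\delta}\left(\psi_{1}-y_{1}\right)\right)=0,
\end{equation*}
then test this equation against $v=y_{2}-y_{1}\in H_{0}^{1}\left(\Omega\right)$.

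The next step is to exploit monotonicity together with the coercivity hypothesis H\textsubscript{2}. Writing out the pairing, the term $\sigma\left(y_{2}-y_{1},y_{2}-y_{1}\right)$ is bounded below by $c\parallel y_{2}-y_{1}\parallel_{H^{1}\left(\Omega\right)}^{2}$. For the nonlinear terms I would split off the parts that are monotone in $y_{2}-y_{1}$ (these have a favorable sign and may be discarded) from the parts involving $\varphi_{2}-\varphi_{1}$ and $\psi_{2}-\psi_{1}$; the latter are controlled using the global Lipschitz bound $\left|\beta_{\delta}'\right|\leq \frac{1}{\delta}$ — which is immediate from the explicit formula for $\beta_{\delta}'$ — via the Mean-Value Theorem, exactly in the spirit of Lemmas \ref{lm:b1} and \ref{lm:b2}. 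This yields
\begin{equation*}
c\parallel y_{2}-y_{1}\parallel_{H^{1}\left(\Omega\right)}^{2}\leq \frac{C}{\delta}\left(\parallel \varphi_{2}-\varphi_{1}\parallel_{L^{2}\left(\Omega\right)}+\parallel \psi_{2}-\psi_{1}\parallel_{L^{2}\left(\Omega\right)}\right)\parallel y_{2}-y_{1}\parallel_{L^{2}\left(\Omega\right)},
\end{equation*}
and after dividing by $\parallel y_{2}-y_{1}\parallel_{H^{1}\left(\Omega\right)}$ and using $\parallel\cdot\parallel_{L^{2}\left(\Omega\right)}\leq\parallel\cdot\parallel_{H^{1}\left(\Omega\right)}$ on the right factor, one gets the Lipschitz estimate with constant $l_{1}:=\frac{C}{\delta}$ (absorbing $1/c$ into $C$). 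Note this is essentially the content already recorded in the third Theorem of Section 2, now phrased for the algorithm map $F_{1}$, so I could alternatively just cite that result.

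The main obstacle is the careful bookkeeping of signs in the monotone terms: one must check that the cross-differences such as $\beta_{\delta}\left(y_{2}-\varphi_{2}\right)-\beta_{\delta}\left(y_{1}-\varphi_{1}\right)$, when tested against $y_{2}-y_{1}$, decompose so that the "pure $y$" increment $\beta_{\delta}\left(y_{2}-\varphi_{1}\right)-\beta_{\delta}\left(y_{1}-\varphi_{1}\right)$ contributes a nonnegative term (since $\beta_{\delta}$ is nondecreasing) while the remaining increment in the $\varphi$-variable is the only one that needs quantitative control — and similarly, with the correct orientation, for the $\psi$-dependent term where $\beta_{\delta}\left(\psi-\cdot\right)$ is nonincreasing in its second slot. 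Once the sign structure is pinned down, the rest is the routine coercivity-plus-Cauchy–Schwarz argument. A minor point to state explicitly is that the target space is indeed $\mathcal{U}=H^{2}\left(\Omega\right)\cap H_{0}^{1}\left(\Omega\right)$: this follows from elliptic regularity for \eqref{2.2} with right-hand side in $L^{2}\left(\Omega\right)$, but the Lipschitz bound itself is stated in the $H^{1}$-norm as above.
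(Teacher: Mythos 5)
Your proposal is correct and follows essentially the same route as the paper: subtract the two copies of the state equation \eqref{2.2}, test against $y_{2}-y_{1}$, invoke the coercivity hypothesis H\textsubscript{2}, and control the nonlinear increments through the $\tfrac{1}{\delta}$-Lipschitz bound on $\beta_{\delta}$ (the paper packages this last step as Lemma \ref{lm:b2}). Your explicit monotonicity bookkeeping is in fact a welcome refinement: applied verbatim, Lemma \ref{lm:b2} leaves a $\tfrac{C}{\delta}\parallel y_{2}-y_{1}\parallel_{L^{2}\left(\Omega\right)}$ term on the right that cannot be absorbed for small $\delta$, and discarding the monotone ``pure $y$'' increments by sign, as you do, is precisely what closes that gap.
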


\begin{proof}
Let $y_{n}^{\delta }=F_{1}\left( \varphi _{n-1}^{\delta ,1},\psi
_{n-1}^{\delta ,1}\right) $ and $z_{n}^{\delta }=F_{1}\left(
\varphi _{n-1}^{\delta ,2},\psi _{n-1}^{\delta ,2}\right) $, where
$\left( y_{n}^{\delta },\varphi _{n-1}^{\delta ,1},\psi
_{n-1}^{\delta ,1}\right) $ and $\left( z_{n}^{\delta },\varphi
_{n-1}^{\delta ,2},\psi _{n-1}^{\delta
,2}\right) $ belong to $\mathcal{U}\times \mathcal{U}\times \mathcal{U}$. From the equation given by \eqref{2.2}, by the coercivity condition H\textsubscript{2} of $\sigma \left(
\cdot,\cdot\right)$ and Lemma \ref{lm:b2}, we get

\begin{equation*}
\parallel y_{n}^{\delta }-z_{n}^{\delta }\parallel _{H^{1}\left( \Omega
\right) }\leq \frac{C}{\delta }\left( \parallel \varphi
_{n-1}^{\delta ,1}-\varphi _{n-1}^{\delta ,2}\parallel
_{L^{2}\left( \Omega \right) }+
\parallel \psi _{n-1}^{\delta ,1}-\psi _{n-1}^{\delta ,2}\parallel
_{L^{2}\left( \Omega \right) }\right).
\end{equation*}

\end{proof}

\begin{lemma}\label{lm:l2}
The function $F_{2}$ defined by \eqref{2.3}, is locally Lipschitz
from \\ $\left( \mathcal{B}_{H^{1}}\left(
0,\tilde{\rho}_{2}\right) \cap
\mathcal{U}\right) \times \left( \mathcal{B}_{H^{2}}\left( 0,\tilde{\rho}%
_{1}\right) \cap \mathcal{U}\right) \times \left(
\mathcal{B}_{H^{2}}\left(
0,\tilde{\rho}_{1}\right) \cap \mathcal{U}\right) $ to $\mathcal{B}_{H^{1}}\left( 0,%
\tilde{\rho}_{3}\right) \cap \mathcal{U}$, with the Lipschitz
constant $l_{2}:=\left( C+\dfrac{C\tilde{\rho}_{3}}{\delta
}\right) $, where $\tilde{\rho}_{3}$ is given by Corollary
\ref{cor:c3}.
\end{lemma}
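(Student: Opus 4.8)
The plan is to estimate the difference of two solutions of the adjoint equation \eqref{2.4} directly, using the coercivity hypothesis $H_2$ for $\sigma^{*}$ together with Lemma~\ref{lm:b1}, exactly as Lemma~\ref{lm:l1} used $H_2$ and Lemma~\ref{lm:b2}.

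First I would set $p_{n}^{\delta,1}=F_{2}(y_{n}^{\delta,1},\varphi_{n-1}^{\delta,1},\psi_{n-1}^{\delta,1})$ and $p_{n}^{\delta,2}=F_{2}(y_{n}^{\delta,2},\varphi_{n-1}^{\delta,2},\psi_{n-1}^{\delta,2})$, where all the arguments lie in the indicated balls and, by Corollary~\ref{cor:c3}, the outputs $p_{n}^{\delta,i}$ lie in $\mathcal{B}_{H^{1}}(0,\tilde{\rho}_{3})\cap\mathcal{U}$. Subtracting the two copies of \eqref{2.4}, testing the difference equation against $v=p_{n}^{\delta,1}-p_{n}^{\delta,2}$, and invoking the coercivity of $\sigma^{*}$ gives
\begin{equation*}
c\parallel p_{n}^{\delta,1}-p_{n}^{\delta,2}\parallel_{H^{1}(\Omega)}^{2}\leq \left( \text{nonlinear terms}\right) + \parallel y_{n}^{\delta,1}-y_{n}^{\delta,2}\parallel_{L^{2}(\Omega)}\parallel p_{n}^{\delta,1}-p_{n}^{\delta,2}\parallel_{H^{1}(\Omega)}.
\end{equation*}
The nonlinear terms are
$\bigl(\beta_{\delta}^{\prime}(y_{n}^{\delta,1}-\varphi_{n-1}^{\delta,1})p_{n}^{\delta,1}-\beta_{\delta}^{\prime}(y_{n}^{\delta,2}-\varphi_{n-1}^{\delta,2})p_{n}^{\delta,2}\bigr)$ and the analogous expression with $\psi_{n-1}-y_{n}$ in place of $y_{n}-\varphi_{n-1}$; here Lemma~\ref{lm:b1} (and its obvious variant for the $\psi$-argument) bounds their $L^{2}$ norms by $\tfrac{C}{\delta}\parallel p_{n}^{\delta,1}-p_{n}^{\delta,2}\parallel_{H^{1}}+\tfrac{C\tilde{\rho}_{3}}{\delta}(\parallel y_{n}^{\delta,1}-y_{n}^{\delta,2}\parallel_{L^{2}}+\parallel\varphi_{n-1}^{\delta,1}-\varphi_{n-1}^{\delta,2}\parallel_{L^{2}}+\parallel\psi_{n-1}^{\delta,1}-\psi_{n-1}^{\delta,2}\parallel_{L^{2}})$.

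After these substitutions I would pair each nonlinear term against $p_{n}^{\delta,1}-p_{n}^{\delta,2}$ via Cauchy–Schwarz, absorb the resulting $\tfrac{C}{\delta}\parallel p_{n}^{\delta,1}-p_{n}^{\delta,2}\parallel_{H^{1}}^{2}$ contribution into the left-hand side (this is where $\delta\leq C$ is needed so that $c-\tfrac{C}{\delta}$... more precisely, one uses Young's inequality to split off a small multiple of $\parallel p_{n}^{\delta,1}-p_{n}^{\delta,2}\parallel_{H^{1}}^{2}$ and keeps the remaining coefficient controlled), divide through by $\parallel p_{n}^{\delta,1}-p_{n}^{\delta,2}\parallel_{H^{1}}$, and collect terms to obtain
\begin{equation*}
\parallel p_{n}^{\delta,1}-p_{n}^{\delta,2}\parallel_{H^{1}(\Omega)}\leq \left( C+\frac{C\tilde{\rho}_{3}}{\delta}\right)\left(\parallel y_{n}^{\delta,1}-y_{n}^{\delta,2}\parallel_{H^{1}(\Omega)}+\parallel\varphi_{n-1}^{\delta,1}-\varphi_{n-1}^{\delta,2}\parallel_{L^{2}(\Omega)}+\parallel\psi_{n-1}^{\delta,1}-\psi_{n-1}^{\delta,2}\parallel_{L^{2}(\Omega)}\right),
\end{equation*}
which is the asserted bound with $l_{2}:=C+\tfrac{C\tilde{\rho}_{3}}{\delta}$. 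The main obstacle is purely bookkeeping: making sure the term $\beta_{\delta}^{\prime}(\cdot)(p_{n}^{\delta,1}-p_{n}^{\delta,2})$ — which carries a factor $1/\delta$ and the same unknown being estimated — is handled by absorption rather than left on the right-hand side, and checking that the $\psi$-argument nonlinearity admits the same Lemma~\ref{lm:b1}-type estimate (it does, by symmetry, since $\beta_{\delta}^{\prime}$ is bounded by $1/\delta$ and Lipschitz with constant $C/\delta$ on the relevant range). Everything else is a direct transcription of the argument already used for $F_{1}$.
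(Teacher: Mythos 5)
Your strategy coincides with the paper's: subtract the two copies of \eqref{2.4}, test the difference against $v=p_{n}^{\delta,1}-p_{n}^{\delta,2}$, invoke the coercivity H\textsubscript{2} of $\sigma^{\ast}$, and control the nonlinear terms with Lemma \ref{lm:b1} and its $\psi$-analogue (the paper's proof is nothing more than the asserted final inequality, so this is clearly the intended argument). However, the one step you yourself flag as the main difficulty is resolved the wrong way. After Cauchy--Schwarz, the dangerous contribution is $\tfrac{C}{\delta}\Vert p_{n}^{\delta,1}-p_{n}^{\delta,2}\Vert_{H^{1}}\cdot\Vert p_{n}^{\delta,1}-p_{n}^{\delta,2}\Vert_{L^{2}}\le \tfrac{C}{\delta}\Vert p_{n}^{\delta,1}-p_{n}^{\delta,2}\Vert_{H^{1}}^{2}$, and this \emph{cannot} be absorbed into the coercivity term $c\Vert p_{n}^{\delta,1}-p_{n}^{\delta,2}\Vert_{H^{1}}^{2}$: absorption would require $\tfrac{C}{\delta}<c$, i.e.\ $\delta$ bounded \emph{below}, whereas the whole construction sends $\delta\to 0$ (the standing hypothesis $\delta\le C$ points the opposite way). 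Young's inequality does not rescue this, because the large coefficient already multiplies the full square of the very quantity being estimated, not a product of two different quantities one of which could be made small. As written, this step fails.

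The correct treatment, which is what actually produces the stated constant $l_{2}=C+\tfrac{C\tilde{\rho}_{3}}{\delta}$, uses the sign of $\beta_{\delta}'$ rather than absorption. Split
\begin{equation*}
\beta_{\delta}'(y_{n}^{\delta,1}-\varphi_{n-1}^{\delta,1})p_{n}^{\delta,1}-\beta_{\delta}'(y_{n}^{\delta,2}-\varphi_{n-1}^{\delta,2})p_{n}^{\delta,2}
=\beta_{\delta}'(y_{n}^{\delta,1}-\varphi_{n-1}^{\delta,1})(p_{n}^{\delta,1}-p_{n}^{\delta,2})
+\bigl(\beta_{\delta}'(y_{n}^{\delta,1}-\varphi_{n-1}^{\delta,1})-\beta_{\delta}'(y_{n}^{\delta,2}-\varphi_{n-1}^{\delta,2})\bigr)p_{n}^{\delta,2},
\end{equation*}
and likewise for the $\psi$-term. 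Since $\beta_{\delta}'\ge 0$, the first pieces tested against $p_{n}^{\delta,1}-p_{n}^{\delta,2}$ give $\int_{\Omega}\beta_{\delta}'(\cdot)\,(p_{n}^{\delta,1}-p_{n}^{\delta,2})^{2}\,dx\ge 0$ and can simply be discarded from the left-hand side, as they reinforce coercivity (this is the same device the paper invokes later in the error estimate for $y_{n}^{\delta}$, where the nonpositivity of $-(\,(\beta_{\delta}'+\beta_{\delta}')(y_{n}^{\delta}-\bar{y}^{\delta}),y_{n}^{\delta}-\bar{y}^{\delta})$ is used). Only the second pieces require the Mean-Value bound, and they contribute $\tfrac{C\tilde{\rho}_{3}}{\delta}\bigl(\Vert y_{n}^{\delta,1}-y_{n}^{\delta,2}\Vert+\Vert\varphi_{n-1}^{\delta,1}-\varphi_{n-1}^{\delta,2}\Vert+\Vert\psi_{n-1}^{\delta,1}-\psi_{n-1}^{\delta,2}\Vert\bigr)$ with no $\Vert p_{n}^{\delta,1}-p_{n}^{\delta,2}\Vert_{H^{1}}$ factor; together with the source term $(y_{n}^{\delta,1}-y_{n}^{\delta,2},p_{n}^{\delta,1}-p_{n}^{\delta,2})$ this yields exactly $l_{2}$. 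With your absorption route the constant would instead carry $c-\tfrac{C}{\delta}$ in a denominator and would be meaningless for small $\delta$.
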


\begin{proof}
Let $p_{n}^{\delta ,1}=F_{2}\left( y_{n}^{\delta ,1},\varphi
_{n-1}^{\delta ,1},\psi _{n-1}^{\delta ,1}\right)$ and
$p_{n}^{\delta ,2}=F_{2}\left( y_{n}^{\delta ,2},\varphi
_{n-1}^{\delta ,2},\psi _{n-1}^{\delta ,2}\right)$ where $\left(
y_{n}^{\delta ,1},\varphi _{n-1}^{\delta ,1},\psi _{n-1}^{\delta
,1}\right) $ and $\left( y_{n}^{\delta ,2},\varphi _{n-1}^{\delta
,2},\psi _{n-1}^{\delta ,2}\right) $ belong to $\left(
\mathcal{B}_{H^{1}}\left( 0,\tilde{\rho}_{2}\right) \cap
\mathcal{U}\right) \times \left( \mathcal{B}_{H^{2}}\left(
0,\tilde{\rho}_{1}\right) \cap
\mathcal{W}\right) \times \left( \mathcal{B}_{H^{2}}\left( 0,\tilde{\rho}%
_{1}\right) \cap \mathcal{W}\right)$. Then by the adjoint state
equation \eqref{2.4},  we get

\begin{multline*}
\parallel p_{n}^{\delta ,2}-p_{n}^{\delta ,1}\parallel
_{H^{1}\left(
\Omega \right) }\leq \left( C+\frac{C\text{ }\tilde{\rho}_{3}}{\delta }%
\right) (\Vert y_{n}^{\delta ,2}-y_{n}^{\delta ,1}\Vert
_{L^{2}\left( \Omega
\right) }+ \\
\Vert \psi _{n-1}^{\delta ,2}-\psi _{n-1}^{\delta ,1}\Vert
_{L^{2}\left( \Omega \right) }+\Vert \varphi _{n-1}^{\delta
,2}-\varphi _{n-1}^{\delta ,1}\Vert _{L^{2}\left( \Omega \right)
}).
\end{multline*}

\end{proof}

\begin{lemma}\label{lm:l3}
Since the following condition,%
\begin{equation*}
\tilde{\rho}_{3}\leq \delta \nu C,
\end{equation*}

is fulfilled, the function $F_{3}$
is locally Lipschitiz  from $\left( \mathcal{B}_{H^{1}}\left( 0,%
\tilde{\rho}_{2}\right) \cap \mathcal{U}\right) \times \left( \mathcal{B}%
_{H^{1}}\left( 0,\tilde{\rho}_{3}\right) \cap \mathcal{U}\right)\times \left( \mathcal{B}_{H^{1}}\left( 0,%
\tilde{\rho}_{4}\right) \cap \mathcal{U}\right) $ to $%
\left( \mathcal{B}_{H^{2}}\left( 0,\tilde{\rho}_{4}\right) \cap \mathcal{W}%
\right)$, with Lipschitz constant
\begin{equation*}
l_{3}:=\dfrac{C}{\delta \nu C-C\tilde{\rho}_{3}}.
\end{equation*}
\end{lemma}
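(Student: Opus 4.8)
The plan is to mimic the scheme used for Lemmas~\ref{lm:l1} and~\ref{lm:l2}: pick two admissible sets of data, subtract the two corresponding instances of the equation~\eqref{2..8}, and estimate the difference of the two outputs directly in the $H^{2}$-norm. Writing $\psi_{n}^{\delta,1}$ and $\psi_{n}^{\delta,2}$ for the two values of $F_{3}$, subtraction of the two copies of~\eqref{2..8} gives, with homogeneous Dirichlet data,
\begin{equation*}
\nu\Delta\bigl(\psi_{n}^{\delta,2}-\psi_{n}^{\delta,1}\bigr)=-\bigl(\lambda_{n}^{\delta,2}-\lambda_{n}^{\delta,1}\bigr)-\Bigl(\beta_{\delta}'\bigl(\psi_{n}^{\delta,2}-y_{n}^{\delta,2}\bigr)p_{n}^{\delta,2}-\beta_{\delta}'\bigl(\psi_{n}^{\delta,1}-y_{n}^{\delta,1}\bigr)p_{n}^{\delta,1}\Bigr).
\end{equation*}
Since $p_{n}^{\delta}$ is not sign-definite the zero-order term is not monotone, so I would keep it on the right-hand side and read this as a Poisson problem for $\psi_{n}^{\delta,2}-\psi_{n}^{\delta,1}$; elliptic $H^{2}$-regularity on $\Omega$ (see~\cite{Gilbarg}) then gives $\|\psi_{n}^{\delta,2}-\psi_{n}^{\delta,1}\|_{H^{2}(\Omega)}\le\frac{C}{\nu}$ times the $L^{2}(\Omega)$-norm of the right-hand side.

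It remains to estimate that right-hand side. The term containing $\lambda_{n}^{\delta}$ enters linearly and is bounded directly by $\|\lambda_{n}^{\delta,2}-\lambda_{n}^{\delta,1}\|_{L^{2}(\Omega)}$. For the $\beta_{\delta}'$-difference I would invoke Lemma~\ref{lm:b1} with $\psi_{n}^{\delta}$ playing the role of $y$ and $y_{n}^{\delta}$ that of $\varphi$ there, using $p_{n}^{\delta}\in\mathcal{B}_{H^{1}}(0,\tilde{\rho}_{3})$; this yields a bound of the form
\begin{equation*}
\frac{C}{\delta}\|p_{n}^{\delta,2}-p_{n}^{\delta,1}\|_{H^{1}(\Omega)}+\frac{C\tilde{\rho}_{3}}{\delta}\|y_{n}^{\delta,2}-y_{n}^{\delta,1}\|_{L^{2}(\Omega)}+\frac{C\tilde{\rho}_{3}}{\delta}\|\psi_{n}^{\delta,2}-\psi_{n}^{\delta,1}\|_{L^{2}(\Omega)}.
\end{equation*}
Inserting this into the regularity estimate and using $\|\psi_{n}^{\delta,2}-\psi_{n}^{\delta,1}\|_{L^{2}(\Omega)}\le\|\psi_{n}^{\delta,2}-\psi_{n}^{\delta,1}\|_{H^{2}(\Omega)}$ leaves a term $\frac{C\tilde{\rho}_{3}}{\delta\nu}\|\psi_{n}^{\delta,2}-\psi_{n}^{\delta,1}\|_{H^{2}(\Omega)}$ on the right.

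The main obstacle is the absorption of this last term into the left-hand side, which is legitimate exactly when $\frac{C\tilde{\rho}_{3}}{\delta\nu}<1$, i.e.\ under the standing hypothesis $\tilde{\rho}_{3}\le\delta\nu C$. Dividing through by $1-\frac{C\tilde{\rho}_{3}}{\delta\nu}=\frac{\delta\nu-C\tilde{\rho}_{3}}{\delta\nu}$ and collecting the generic constants then gives
\begin{equation*}
\|\psi_{n}^{\delta,2}-\psi_{n}^{\delta,1}\|_{H^{2}(\Omega)}\le l_{3}\Bigl(\|\lambda_{n}^{\delta,2}-\lambda_{n}^{\delta,1}\|_{L^{2}(\Omega)}+\|p_{n}^{\delta,2}-p_{n}^{\delta,1}\|_{H^{1}(\Omega)}+\|y_{n}^{\delta,2}-y_{n}^{\delta,1}\|_{H^{1}(\Omega)}\Bigr)
\end{equation*}
with $l_{3}=\frac{C}{\delta\nu C-C\tilde{\rho}_{3}}$, as claimed. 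Finally, the a priori bound $\|\psi_{n}^{\delta}\|_{H^{2}(\Omega)}\le\tilde{\rho}_{4}$ established earlier shows that $F_{3}$ maps into $\mathcal{B}_{H^{2}}(0,\tilde{\rho}_{4})\cap\mathcal{W}$, so $F_{3}$ is well defined and locally Lipschitz there with Lipschitz constant $l_{3}$.
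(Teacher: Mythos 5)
Your argument is essentially the paper's own: subtract the two instances of the defining equation for $F_{3}$, bound the difference in $H^{2}(\Omega)$ via the elliptic estimate for $\nu\Delta$, control the $\beta_{\delta}'(\cdot)p$ difference by Lemma~\ref{lm:b1}, and absorb the resulting $\frac{C\tilde{\rho}_{3}}{\delta\nu}\|\psi_{n}^{\delta,2}-\psi_{n}^{\delta,1}\|_{H^{2}(\Omega)}$ term into the left-hand side under the condition $\tilde{\rho}_{3}\leq\delta\nu C$, arriving at the same constant $l_{3}$. Your write-up is in fact slightly more explicit than the paper's (which loosely invokes coercivity of $\sigma$ and carries a $\varphi$/$\psi$ notational slip), but the route and the conclusion coincide.
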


\begin{proof}
From equation \eqref{2..8}, by the coercivity condition H\textsubscript{2} of $\sigma \left(
.,.\right)$, and by Lemma $\ref{lm:b1}$, we obtain
\begin{multline}
\parallel \varphi _{n}^{\delta ,1}-\varphi _{n}^{\delta ,2}\parallel
_{H^{2}(\Omega )} \leq \dfrac{C\text{ }\tilde{\rho}_{3}}{\nu
\delta }
\parallel (y_{n}^{\delta ,1}-y_{n}^{\delta ,2})-(\varphi _{n}^{\delta
,1}-\varphi _{n}^{\delta ,2})\parallel _{L_{2}(\Omega )}
+\dfrac{C}{\nu \delta } \parallel p_{n}^{\delta ,1}-p_{n}^{\delta
,2}\parallel _{L_{2}(\Omega )}+\\
\dfrac{C}{\nu }\parallel \lambda _{n}^{\delta ,1}-\lambda
_{n}^{\delta ,2}\parallel _{L_{2}(\Omega )}
\end{multline}

For the previous inequality to have a meaning, we must have
\begin{equation*}
\tilde{\rho}_{3}\leq C\nu \delta.
\end{equation*}
 Then, we get
\begin{multline*}
\parallel
\varphi _{n}^{\delta ,1}-\varphi _{n}^{\delta ,2}\parallel
_{H^{2}(\Omega )} \leq \dfrac{C\tilde{\rho}_{3}}{\nu \delta-
C\tilde{\rho}_{3}}
\parallel y_{n}^{\delta ,1}-y_{n}^{\delta ,2}\parallel _{L_{2}(\Omega )}+
\dfrac{C}{\nu \delta- C\tilde{\rho}_{3}} \parallel p_{n}^{\delta
,1}-p_{n}^{\delta ,2}\parallel _{L_{2}(\Omega )}+\\
+ \dfrac{ \delta C}{\nu \delta- C\tilde{\rho}_{3}}\parallel
\lambda _{n}^{\delta ,1}-\lambda _{n}^{\delta ,2}\parallel
_{L_{2}(\Omega )}.
\end{multline*}
\end{proof}

\begin{lemma}\label{lm:l4}
Since the following condition,%
\begin{equation*}
\tilde{\rho}_{3}\leq \delta \nu C,
\end{equation*}

is fulfilled, then, the function $F_{4}$ given by \eqref{2..7}
is locally Lipschitiz from \\
$\left( \mathcal{B}_{H^{1}}\left( 0,%
\tilde{\rho}_{2}\right) \cap \mathcal{U}\right) \times \left( \mathcal{B}%
_{H^{1}}\left( 0,\tilde{\rho}_{3}\right) \cap \mathcal{U}\right)\times \left( \mathcal{B}%
_{H^{1}}\left( 0,\tilde{\rho}_{4}\right) \cap \mathcal{U}\right) $ to $%
\left( \mathcal{B}_{H^{2}}\left( 0,\tilde{\rho}_{4}\right) \cap \mathcal{W}%
\right) $, with Lipschitz constant
\begin{equation*}
l_{4}:=\dfrac{C}{\delta \nu C-C\tilde{\rho}_{3}}.
\end{equation*}
\end{lemma}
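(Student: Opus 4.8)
The plan is to repeat, with the obvious changes, the computation already carried out for $F_{3}$ in the proof of Lemma~\ref{lm:l3}, this time starting from the defining relation \eqref{2..10} of $F_{4}$ instead of \eqref{2..8}. First I would pick two triples $\left( y_{n}^{\delta ,1},p_{n}^{\delta ,1},\lambda _{n}^{\delta ,1}\right) $ and $\left( y_{n}^{\delta ,2},p_{n}^{\delta ,2},\lambda _{n}^{\delta ,2}\right) $ in the product ball $\left( \mathcal{B}_{H^{1}}\left( 0,\tilde{\rho}_{2}\right) \cap \mathcal{U}\right) \times \left( \mathcal{B}_{H^{1}}\left( 0,\tilde{\rho}_{3}\right) \cap \mathcal{U}\right) \times \left( \mathcal{B}_{H^{1}}\left( 0,\tilde{\rho}_{4}\right) \cap \mathcal{U}\right) $ and set $\varphi _{n}^{\delta ,1}=F_{4}\left( y_{n}^{\delta ,1},p_{n}^{\delta ,1}\right) $, $\varphi _{n}^{\delta ,2}=F_{4}\left( y_{n}^{\delta ,2},p_{n}^{\delta ,2}\right) $, the corresponding solutions of \eqref{2..10} with homogeneous Dirichlet data. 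Subtracting the two copies of \eqref{2..10}, testing the resulting identity with $\varphi _{n}^{\delta ,1}-\varphi _{n}^{\delta ,2}$ (equivalently, invoking the coercivity condition H\textsubscript{2} for the elliptic operator attached to $\nu \Delta $), applying Cauchy--Schwarz, and estimating the difference $\beta _{\delta }^{\prime }( y_{n}^{\delta ,1}-\varphi _{n}^{\delta ,1}) p_{n}^{\delta ,1}-\beta _{\delta }^{\prime }( y_{n}^{\delta ,2}-\varphi _{n}^{\delta ,2}) p_{n}^{\delta ,2}$ by Lemma~\ref{lm:b1} (admissible because each $p_{n}^{\delta ,i}$ lies in $\mathcal{B}_{H^{1}}\left( 0,\tilde{\rho}_{3}\right) $), I expect to reach
\[
\parallel \varphi _{n}^{\delta ,1}-\varphi _{n}^{\delta ,2}\parallel _{H^{2}(\Omega )}\leq \frac{C\tilde{\rho}_{3}}{\nu \delta }\parallel (y_{n}^{\delta ,1}-y_{n}^{\delta ,2})-(\varphi _{n}^{\delta ,1}-\varphi _{n}^{\delta ,2})\parallel _{L^{2}(\Omega )}+\frac{C}{\nu \delta }\parallel p_{n}^{\delta ,1}-p_{n}^{\delta ,2}\parallel _{L^{2}(\Omega )}+\frac{C}{\nu }\parallel \lambda _{n}^{\delta ,1}-\lambda _{n}^{\delta ,2}\parallel _{L^{2}(\Omega )},
\]
which is the exact analogue of the intermediate inequality obtained for $F_{3}$.

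The main (and essentially only) obstacle is the absorption step. The contribution $\frac{C\tilde{\rho}_{3}}{\nu \delta }\parallel \varphi _{n}^{\delta ,1}-\varphi _{n}^{\delta ,2}\parallel _{L^{2}(\Omega )}$ hidden in the first summand must be moved to the left-hand side, which is legitimate precisely under the standing hypothesis $\tilde{\rho}_{3}\leq \delta \nu C$, guaranteeing that the relevant coefficient stays strictly below $1$ after the usual relabelling of the generic constant $C$. Dividing through then yields
\[
\parallel \varphi _{n}^{\delta ,1}-\varphi _{n}^{\delta ,2}\parallel _{H^{2}(\Omega )}\leq \frac{C\tilde{\rho}_{3}}{\nu \delta -C\tilde{\rho}_{3}}\parallel y_{n}^{\delta ,1}-y_{n}^{\delta ,2}\parallel _{L^{2}(\Omega )}+\frac{C}{\nu \delta -C\tilde{\rho}_{3}}\parallel p_{n}^{\delta ,1}-p_{n}^{\delta ,2}\parallel _{L^{2}(\Omega )}+\frac{\delta C}{\nu \delta -C\tilde{\rho}_{3}}\parallel \lambda _{n}^{\delta ,1}-\lambda _{n}^{\delta ,2}\parallel _{L^{2}(\Omega )}.
\]

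Finally I would read off the worst of the three coefficients, use the continuous embedding $H^{1}(\Omega )\hookrightarrow L^{2}(\Omega )$ to bound the $L^{2}$-norms on the right by the $H^{1}$-norms, and conclude that $F_{4}$ is locally Lipschitz with constant $l_{4}:=\dfrac{C}{\delta \nu C-C\tilde{\rho}_{3}}$. It remains only to verify that the range of $F_{4}$ sits inside $\mathcal{B}_{H^{2}}\left( 0,\tilde{\rho}_{4}\right) \cap \mathcal{W}$, but this is exactly the a priori $H^{2}$-bound on $\varphi _{n}^{\delta }$ already recorded in the corollaries following the Proposition, so nothing new is required there. The only points deserving genuine care are the bookkeeping of the constant $C$ and the sign condition $\nu \delta -C\tilde{\rho}_{3}>0$; the rest is a verbatim transcription of the $F_{3}$ argument.
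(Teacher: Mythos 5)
Your proposal is correct and follows essentially the same route as the paper: subtract the two copies of the defining elliptic equation, use the coercivity condition H\textsubscript{2} together with the Mean-Value estimate of Lemma \ref{lm:b1} on the term $\beta_{\delta}^{\prime}(\cdot)p$, and then absorb the $\varphi$-difference into the left-hand side under the condition $\tilde{\rho}_{3}\leq \delta\nu C$ before dividing. The only (cosmetic) discrepancy is that the paper's printed proof of this lemma recycles the $F_{3}$ computation with $\psi$ in place of $\varphi$ and cites \eqref{2..8} and Lemma \ref{lm:b2}, whereas you correctly start from \eqref{2..10} with $\varphi$ and Lemma \ref{lm:b1}, which is the pairing actually consistent with the definition of $F_{4}$.
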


\begin{proof}
From equation \eqref{2..8}, by the coercivity condition H\textsubscript{2} of $\sigma \left(
\cdot,\cdot\right)$
 and by Lemma \ref{lm:b2}, we get

\begin{multline*}
\parallel \psi _{n}^{\delta ,1}-\psi _{n}^{\delta ,2}\parallel
_{H^{2}(\Omega )} \leq \dfrac{C\text{ }\tilde{\rho}_{3}}{\nu
\delta }
\parallel (y_{n}^{\delta ,1}-y_{n}^{\delta ,2})-(\psi _{n}^{\delta
,1}-\psi _{n}^{\delta ,2})\parallel _{L_{2}(\Omega )}
+\dfrac{C}{\nu \delta } \parallel p_{n}^{\delta ,1}-p_{n}^{\delta
,2}\parallel _{L_{2}(\Omega )}+\\
+\dfrac{C}{\nu }\parallel \lambda_{n}^{\delta
,1}-\lambda_{n}^{\delta ,2}\parallel _{L_{2}(\Omega )}.
\end{multline*}

For the previous inequality to have a sense, we must have
\begin{equation*}
\tilde{\rho}_{3}\leq C\nu \delta.
\end{equation*}
 Then, we get
\begin{multline*}
\parallel
\psi _{n}^{\delta ,1}-\psi _{n}^{\delta ,2}\parallel
_{H^{2}(\Omega )} \leq \dfrac{C\tilde{\rho}_{3}}{\nu \delta-
C\tilde{\rho}_{3}}
\parallel y_{n}^{\delta ,1}-y_{n}^{\delta ,2}\parallel _{L_{2}(\Omega )}+
\dfrac{C}{\nu \delta- C\tilde{\rho}_{3}} \parallel p_{n}^{\delta
,1}-p_{n}^{\delta ,2}\parallel _{L_{2}(\Omega )}+\\
+ \dfrac{ \delta C}{\nu \delta- C\tilde{\rho}_{3}} \parallel
\lambda_{n}^{\delta ,1}-\lambda_{n}^{\delta ,2}\parallel
_{L_{2}(\Omega )}.
\end{multline*}
\end{proof}

Now, we give the proof of Theorem \ref{thr1}.

\begin{proof}

Let
\begin{equation*}(\varphi _{n}^{\delta,1 },\psi _{n}^{\delta,1
}):=(F_{3}\left( p_{n}^{\delta,1 },y_{n}^{\delta,1
}\right),F_{4}\left( p_{n}^{\delta,1 },y_{n}^{\delta,1 }\right)),
\end{equation*}
and
\begin{equation*}
(\varphi _{n}^{\delta,2 },\psi _{n}^{\delta,2 }):=(F_{3}\left(
p_{n}^{\delta,2 },y_{n}^{\delta,2 }\right),F_{4}\left(
p_{n}^{\delta,2 },y_{n}^{\delta,2 }\right)).%
\end{equation*}

 Thanks to the Lemmas \ref{lm:l3} and \ref{lm:l4}, we get

\begin{equation*}
\parallel (\varphi _{n}^{\delta,1 },\psi _{n}^{\delta,1
})-(\varphi _{n}^{\delta,2 },\psi _{n}^{\delta,2 })\parallel
_{H^2(\Omega)} \leq (l_{3}+l_{4})\left( \parallel y_{n}^{\delta,1
}-y_{n}^{\delta,2 }\parallel _{H^1(\Omega)}+\parallel
p_{n}^{\delta,1 }-p_{n}^{\delta,2 }\parallel
_{H^1(\Omega)}\right),
\end{equation*}

where $p_{n}^{\delta,1 }:=F_{2}\left( \varphi _{n-1}^{\delta,1
},y_{n}^{\delta,1 },\psi _{n-1}^{\delta,1 }\right),
p_{n}^{\delta,2 }:=F_{2}\left( \varphi _{n-1}^{\delta,1
},y_{n}^{\delta,2 },\psi _{n-1}^{\delta,1 }\right),
y_{n}^{\delta,1 }:=F_{1}(\varphi _{n-1}^{\delta,1 },\psi
_{n-1}^{\delta,1 })$, and $y_{n}^{\delta,2 }:=F_{1}(\varphi
_{n-1}^{\delta,2 },\psi _{n-1}^{\delta,2 })$, and
 by Lemmas  \ref{lm:l1} and \ref{lm:l2}, we obtain

\begin{equation*}
\parallel (\varphi _{n}^{\delta,1 },\psi _{n}^{\delta,1
})-(\varphi _{n}^{\delta,2 },\psi _{n}^{\delta,2 })\parallel
_{H^2(\Omega)} \leq l\left( \parallel \varphi_{n-1}^{\delta,1
}-\varphi_{n-1}^{\delta,2 }\parallel _{H^1(\Omega)}+\parallel
\psi_{n-1}^{\delta,1 }-\psi_{n-1}^{\delta,2 }\parallel
_{H^1(\Omega)}\right),
\end{equation*}

where
$l:=l_{1}l_{2}(l_{4}+l_{3})+l_{2}(l_{4}+l_{3})+l_{1}(l_{4}+l_{3})$
is the Lipschitz constant of the function $F$.
\end{proof}

\begin{remark} From above, we have proven that the function $F$ is
locally Lipschitz, and we can see that it is very difficult to get
a sharp estimate of the Lipschitz constant $l$ of $F$. But we are
convinced that appropriate choices of $\tilde{\rho}_{1}$ and
$\delta$ (small enough) could make this constant strictly less
than $1$, so that $F$ is contractive.
\end{remark}

In the sequel, we illustrate how the combined direct and dumped
Newton method can be used most effectively for solving the
optimality system $({S_\delta})$. The main idea is to linearize
equations given by \eqref{2.2}, \eqref{2..7} and \eqref{2..10},
for the numerical solution of the set equation \eqref{2.2},
\eqref{2..7} and \eqref{2..10}. We use the iterative relaxed
Newton's method (see \cite{Ghanemzireg}) on each mapping
$F_1$,$F_3$ and $F_4$, and prove the convergence of the proposed
algorithm.

\begin{theorem}
Since $(\bar{\varphi}^{\delta },\bar{\psi}^{\delta })$ belongs to
$ \mathcal{U}\times\mathcal{U}$ is solution of the following
equation

\begin{equation*}
\ (\bar{\varphi}^{\delta },\bar{\psi}^{\delta })-F\left(
\bar{\varphi}^{\delta },\bar{\psi}^{\delta }\right) =0 \label{3.1}
\end{equation*}

Then $\left( \bar{y}^{\delta },\bar{p} ^{\delta
},\bar{\varphi}^{\delta },\bar{\psi}^{\delta }\right)$ belonging
to $\mathcal{U} \times \mathcal{U} \times \mathcal{W}\times
\mathcal{W}$ satisfies the optimality system $({S}^\delta)$,
where, in the sequel, we put $\bar{s}^{\delta }$ $:=$ $\left(
\bar{y}^{\delta },\bar{p} ^{\delta },\bar{\varphi}^{\delta
},\bar{\psi}^{\delta }\right)$.
\end{theorem}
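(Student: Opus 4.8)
The plan is to show that the fixed-point equation $(\bar\varphi^\delta,\bar\psi^\delta)-F(\bar\varphi^\delta,\bar\psi^\delta)=0$ forces the entire quadruple $\bar s^\delta=(\bar y^\delta,\bar p^\delta,\bar\varphi^\delta,\bar\psi^\delta)$ to satisfy each of the four equations that constitute the optimality system $(S^\delta)$. First I would unwind the definition of $F$: by construction $F=(\tilde F_1,\tilde F_2)$, where $\tilde F_1$ is the composition $F_4\circ(F_1,F_2\circ(F_1,\cdot))$ and $\tilde F_2$ is the composition $F_3\circ(F_1,F_2\circ(F_1,\cdot))$. So I set $\bar y^\delta:=F_1(\bar\varphi^\delta,\bar\psi^\delta)$ and $\bar p^\delta:=F_2(\bar y^\delta,\bar\varphi^\delta,\bar\psi^\delta)$; then the fixed-point identity says precisely that $\bar\varphi^\delta=F_4(\bar y^\delta,\bar p^\delta)$ and $\bar\psi^\delta=F_3(\bar y^\delta,\bar p^\delta)$.

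Next I would read off, from the definitions of $F_1,\dots,F_4$ via equations \eqref{2.2}, \eqref{2.4}, \eqref{2..8} and \eqref{2..10}, what these four memberships mean. The identity $\bar y^\delta=F_1(\bar\varphi^\delta,\bar\psi^\delta)$ is exactly the state equation $A\bar y^\delta+\beta_\delta(\bar y^\delta-\bar\varphi^\delta)-\beta_\delta(\bar\psi^\delta-\bar y^\delta)=f$ with homogeneous boundary data, i.e. the first line of $(S^\delta)$. The identity $\bar p^\delta=F_2(\bar y^\delta,\bar\varphi^\delta,\bar\psi^\delta)$ is the adjoint equation $A\bar p^\delta+\beta_\delta'(\bar y^\delta-\bar\varphi^\delta)\bar p^\delta+\beta_\delta'(\bar\psi^\delta-\bar y^\delta)\bar p^\delta=\bar y^\delta-z$, which upon setting $\mu_1^\delta:=\beta_\delta'(\bar y^\delta-\bar\varphi^\delta)\bar p^\delta$ and $\mu_2^\delta:=\beta_\delta'(\bar\psi^\delta-\bar y^\delta)\bar p^\delta$ is the second line of $(S^\delta)$. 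From $\bar\psi^\delta=F_3(\bar y^\delta,\bar p^\delta)$ and \eqref{2..8} I get $-\bar\lambda^\delta=\nu\Delta\bar\psi^\delta+\beta_\delta'(\bar\psi^\delta-\bar y^\delta)\bar p^\delta$, and from $\bar\varphi^\delta=F_4(\bar y^\delta,\bar p^\delta)$ and \eqref{2..10} I get $-\bar\lambda^\delta+\nu\Delta\bar\varphi^\delta+\beta_\delta'(\bar y^\delta-\bar\varphi^\delta)\bar p^\delta=0$.

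The last step is to eliminate the auxiliary multiplier $\bar\lambda^\delta$: adding the two displayed equations for $\bar\lambda^\delta$ cancels it and yields $\nu\Delta\bar\varphi^\delta+\nu\Delta\bar\psi^\delta+\beta_\delta'(\bar y^\delta-\bar\varphi^\delta)\bar p^\delta+\beta_\delta'(\bar\psi^\delta-\bar y^\delta)\bar p^\delta=0$, which is exactly the third line of $(S^\delta)$; the homogeneous boundary conditions $\bar y^\delta=\bar p^\delta=\bar\varphi^\delta=\bar\psi^\delta=0$ on $\partial\Omega$ come along because each $F_i$ is defined with zero boundary data and $\bar\varphi^\delta,\bar\psi^\delta\in\mathcal U\subset H^1_0(\Omega)$. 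Finally I would note the regularity: $\bar y^\delta,\bar p^\delta\in\mathcal U=H^2(\Omega)\cap H^1_0(\Omega)$ by the elliptic regularity already invoked for \eqref{semi_line_equ} (since $\beta_\delta,\beta_\delta'$ are bounded and Lipschitz and $f,\bar y^\delta-z\in L^2(\Omega)$), while $\bar\varphi^\delta,\bar\psi^\delta\in\mathcal W$ follows from \eqref{2..8}--\eqref{2..10} viewed as Poisson problems with $L^2$ right-hand sides. The only genuinely delicate point is bookkeeping: making sure the composition defining $F$ is unwound with the correct arguments (the inner $F_2$ takes the already-computed $F_1$-output together with $\bar\varphi^\delta,\bar\psi^\delta$) so that the two representations of $\bar\lambda^\delta$ are built from the \emph{same} $\bar y^\delta$ and $\bar p^\delta$; once that is in place the cancellation is immediate and there is no analytic difficulty.
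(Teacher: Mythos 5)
Your proposal is correct and takes essentially the same route as the paper's own proof: both unwind the fixed-point identity through the compositions $F_4\circ(F_1,F_2)$ and $F_3\circ(F_1,F_2)$, read off the four defining equations \eqref{2.2}, \eqref{2.4}, \eqref{2..8}, \eqref{2..10} for $\bar y^\delta,\bar p^\delta,\bar\psi^\delta,\bar\varphi^\delta$, and identify them with $(S^\delta)$ after the multiplier $\bar\lambda^\delta$ cancels. If anything, you are more explicit than the paper on the two points it leaves implicit — the addition of the two $\bar\lambda^\delta$-equations to recover the third line of $(S^\delta)$, and the consistency of the arguments fed to the inner $F_2$ — so no gap.
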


\begin{proof}
Since $\left(\bar{\varphi}^{\delta },\bar{\psi}^{\delta }\right)$
belonging to $\mathcal{W}\times \mathcal{W}$ satisfies equation
\eqref{3.1}, where $\left(\bar{\varphi}^{\delta
},\bar{\psi}^{\delta }\right)$ is given by

\begin{equation*}
\left(\bar{\varphi}^{\delta },\bar{\psi}^{\delta
}\right):=(F_{3}\left( \bar{y}^{\delta },\bar{p}^{\delta
}\right),F_{4}\left( \bar{y}^{\delta },\bar{p}^{\delta }\right)),
\label{3.2}
\end{equation*}

where $\bar{y}^{\delta }$ and $\bar{p}^{\delta }$ belong to
$\mathcal{U}$ can be respectively defined by

\begin{equation}
\bar{y}^{\delta }:=F_{1}\left( \bar{\varphi}^{\delta
},\bar{\psi}^{\delta }\right), \label{3.3}
\end{equation}

and

\begin{equation}
\bar{p}^{\delta }:=F_{3}\left( \bar{y}^{\delta
},\bar{\varphi}^{\delta },\bar{\psi}^{\delta }\right). \label{3.4}
\end{equation}

Then, by the definitions of the mappings $F_{1}, F_{2}, F_{3}$ and
$F_{4}$, the relations \eqref{3.2}, \eqref{3.3} and \eqref{3.4}
are respectively written as

\begin{equation}
A\bar{y}^{\delta }+\beta _{\delta }\left( \bar{y}^{\delta }-\bar{\varphi}%
^{\delta }\right)-\beta _{\delta }\left( \bar{\psi}%
^{\delta }-\bar{y}^{\delta }\right) =f, \text{ in } \Omega, \text{
and } {\bar{y}^{\delta}}=0 \text{ on } \partial\Omega \label{3.5}
\end{equation}

\begin{equation}
A\bar{p}^{\delta }+\beta _{\delta }^{\prime }\left( \bar{y}%
^{\delta }-\bar{\varphi}^{\delta }\right) \bar{p}^{\delta }+\beta
_{\delta }^{\prime }\left( \bar{\psi}^{\delta }-\bar{y} ^{\delta
}\right) \bar{p}^{\delta }=\bar{y}^{\delta }-z, \text{ in }
\Omega, \text{ and } {\bar{p}^{\delta}}=0 \text{ on }
\partial\Omega \label{3.6}
\end{equation}

\begin{equation}
\nu \Delta \bar{\varphi}^{\delta }+\beta _{\delta }^{\prime
}\left( \bar{y}^{\delta }-\bar{\varphi}^{\delta }\right)
\bar{p}^{\delta }=-\bar{\lambda}^{\delta}, \text{ in } \Omega,
\text{ and } {\bar{\varphi}^{\delta}}=0 \text{ on } \partial\Omega
\label{3.7}
\end{equation}

and

\begin{equation}
\nu \Delta \bar{\psi}^{\delta }+\beta _{\delta }^{\prime }\left(
\bar{\psi}^{\delta }-\bar{y}^{\delta }\right) \bar{p}^{\delta }
-\bar{\lambda}^{\delta}=0, \text{ in } \Omega, \text{ and }
{\bar{\psi}^{\delta}}=0 \text{ on }
\partial\Omega \label{3..7}
\end{equation}

 Hence, we remark that the set of equations \eqref{3.5}, \eqref{3.6}, \eqref{3.7} and \eqref{3..7} is the same set of the
equations of the optimality system $({S_\delta})$ when $\left(
y^{\delta },\varphi ^{\delta },\psi ^{\delta },p^{\delta }\right)
$ is replaced by $\left( \bar{y}^{\delta }, \bar{\varphi}^{\delta
},\bar{\psi}^{\delta },\bar{p}^{\delta }\right)$.
\end{proof}

The equations \eqref{2.2}, \eqref{2..8} and \eqref{2..10} of the
optimality system $ \mathcal(S^{\delta })$ are respectively
nonlinear according to $y^{\delta }$, $ \varphi^{\delta }$ and $
\psi^{\delta }$. Therefore for the solution of the system
$(\mathcal{S}^{\delta })$, we propose the following iterative
algorithm.

\begin{algorithm}[h]
\label{Alg2}%
\caption{Newton dumped-Gauss-Seidel algorithm (Continuous
version)}
\begin{algorithmic}[1]

\STATE
 \textbf{Input :}$\left\{ y_{0}^{\delta }, p_{0}^{\delta }, \varphi _{0}^{\delta }, \lambda _{0}^{\delta },\psi _{0}^{\delta },\delta ,\nu
,\omega_{y},\omega_{\varphi},\omega_{\psi},\varepsilon\right\} $
choose $\varphi _{0}^{\delta },\psi _{0}^{\delta }\in
\mathcal{W},\varepsilon $ and $\delta $ in $ \mathbb{R}_{+}^{\ast
};$


\STATE\textbf{Begin:}\\

\STATE \textbf{Calculate } $J_{n-1} \leftarrow
J_{n-1}\left(y^{\delta}_{n-1}, \varphi^{\delta}_{n-1},
\psi^{\delta}_{n-1}\right)$ \STATE \textbf{Step 1}
 \STATE
\textbf{If} $\left( A+\beta _{\delta }^{\prime }\left(
y_{n-1}^{\delta }-\varphi _{n-1}^{\delta }\right)+\beta _{\delta
}^{\prime }\left(\psi _{n-1}^{\delta }- y_{n-1}^{\delta }\right)
\right) $ is singular \textbf{Stop.}

\STATE  \textbf{ Else} \STATE \textbf{\ Solve} $\left( A+\beta
_{\delta }^{\prime }\left( y_{n-1}^{\delta }-\varphi
_{n-1}^{\delta }\right)+\beta _{\delta }^{\prime }\left(\psi
_{n-1}^{\delta }- y_{n-1}^{\delta }\right) \right).r_{n}^{\delta
}=$ \\$ -\omega_{y}\left( A y_{n-1}^{\delta }+\beta _{\delta
}\left( y_{n-1}^{\delta }-\varphi _{n-1}^{\delta } \right)-\beta
_{\delta }\left( \psi _{n-1}^{\delta } -y_{n-1}^{\delta }\right)
-f
\right)$ on $r_{n}^{\delta },$\\

\STATE \ \ \ \ \ \ \ \ \ \ \ \textbf{Then } $y_{n}^{\delta
}=y_{n-1}^{\delta }+$ $r_{n}^{\delta }.$ \\

 \STATE \textbf{End if}

\STATE \textbf{Step 2}
 \STATE \textbf{If} $\left( A+\beta _{\delta
}^{\prime }\left( y_{n}^{\delta }-\varphi _{n-1}^{\delta }\right)
+\beta _{\delta }^{\prime }\left( \psi _{n-1}^{\delta }
-y_{n}^{\delta }\right)\right)$ is singular \textbf{Stop.} \\

\STATE \ \ \ \ \ \ \ \ \  \textbf{\ Else} \\

\STATE \textbf{\ Solve } $\left( A +\beta _{\delta }^{\prime
}\left( y_{n}^{\delta }-\varphi _{n-1}^{\delta }\right)+\beta
_{\delta }^{\prime }\left( \psi _{n-1}^{\delta } -y_{n}^{\delta
}\right) \right) p_{n}^{\delta }=y_{n}^{\delta }-z$
on $p_{n}^{\delta }.$\\

 \STATE \textbf{End if}
 \STATE \textbf{Step 3}
\STATE \textbf{Calculate} $\lambda_{n}^{\delta } = \nu \Delta
\varphi _{n-1}^{\delta }+\beta _{\delta }^{\prime }\left(
y_{n}^{\delta }-\varphi _{n-1}^{\delta }\right) p_{n}^{\delta } $
\STATE \textbf{Step 4}
 \STATE \textbf{If} $\left( \nu \Delta+\beta
_{\delta }^{\prime \prime }\left( \psi _{n-1}^{\delta
}- y_{n}^{\delta }\right) p_{n}^{\delta }\right) $ is not invertible \textbf{Stop.}\\

\STATE \ \ \ \ \ \ \ \ \  \textbf{Else} \\

\STATE \textbf{Solve }$\left(  \nu \Delta+\beta _{\delta }^{\prime
\prime }\left(\psi _{n-1}^{\delta } -y_{n}^{\delta }\right)
 p_{n}^{\delta }\right) .$ $r_{n}^{\delta
}=-\omega_{\psi}\left(\nu A_{h}^{d}\psi _{n-1}^{\delta }+\beta
_{\delta }^{\prime }\left( \psi _{n-1}^{\delta }-y_{n}^{\delta
}\right) p_{n}^{\delta }+\lambda_{n}^{\delta }\right)$ on
$r_{n}^{\delta }.$

\STATE \ \ \ \ \ \ \ \ \ \ \ \textbf{Then }$ \psi _{n}^{\delta
}=\psi _{n-1}^{\delta }+$ $r_{n}^{\delta }$


\STATE \textbf{Step 5}
 \STATE \textbf{If} $\left( \nu \Delta-\beta
_{\delta }^{\prime \prime }\left(y_{n}^{\delta }- \varphi
_{n-1}^{\delta
}\right) p_{n}^{\delta }\right) $ is not invertible \textbf{Stop.}\\

\STATE \ \ \ \ \ \ \ \ \  \textbf{Else} \\

\STATE \textbf{Solve }$\left(  \nu \Delta-\beta _{\delta }^{\prime
\prime }\left( y_{n}^{\delta }-\varphi _{n-1}^{\delta } \right)
p_{n}^{\delta }\right) .$ $r_{n}^{\delta
}=-\omega_{\varphi}\left(\nu A_{h}^{d}\varphi _{n-1}^{\delta
}+\beta _{\delta }^{\prime }\left( y_{n}^{\delta }-\varphi
_{n-1}^{\delta }\right) p_{n}^{\delta }-\lambda_{n}^{\delta
}\right)$ on $r_{n}^{\delta }.$

\STATE \ \ \ \ \ \ \ \ \ \ \ \textbf{Then }$ \varphi _{n}^{\delta
}=\varphi _{n-1}^{\delta }+$ $r_{n}^{\delta }$ \STATE
\textbf{Calculate } $J_{n} \leftarrow J_{n-1}\left(y^{\delta}_{n},
\varphi^{\delta}_{n},\psi^{\delta}_{n}\right)$



 \STATE \textbf{End if}


 \STATE \textbf{If } $|J_{n}-J_{n-1}| \leq  \varepsilon $ \textbf{Stop.}

\STATE \textbf{Ensure :} $s_{n}^{\delta }:=\left( y_{n}^{\delta
},\varphi_{n}^{\delta },\psi_{n}^{\delta },p_{n}^{\delta }\right)
$ \textbf{is a solution}

\STATE \ \ \ \ \ \ \ \ \  \textbf{Else; } $n\leftarrow n+1$,
\textbf{Go to} \textbf{Begin.}

 \STATE \textbf{End if}

\STATE \textbf{End algorithm}.
\end{algorithmic}
\end{algorithm}
\FloatBarrier

\subsection{Convergence results}

In this subsection, we give some conditions on $\delta$ and
$\omega$ to have the convergence of the above algorithm. We denote
by $\bar{y}^{\delta }$, $\bar{p}^{\delta }$,
$\bar{\varphi}^{\delta }$ and $\bar{\psi}^{\delta }$ the solutions
of the equations \eqref{3.5}, \eqref{3.6}, \eqref{3.7} and
\eqref{3..7} respectively, and let $y_{n}^{\delta }$,
$\lambda_{n}^{\delta }$, $p_{n}^{\delta }$, $\psi_{n}^{\delta }$
and $\varphi_{n}^{\delta }$ be given respectively  by step 1, step
2, step 3, step 4, step 5 respectively of the latter algorithm.

\begin{remark}  From Lemma \ref{lm:l2}, if we replace
$y_{n}^{\delta ,2},\varphi _{n-1}^{\delta,2 },\psi
_{n-1}^{\delta,2 }$ and $p_{n}^{\delta,2 }$ respectively by
$\bar{y}^{\delta },\bar{\varphi} ^{\delta },\bar{\psi} ^{\delta }$
and $\bar{p}^{\delta }$, we get

\begin{equation*}
\parallel p_{n}^{\delta,1 }-\bar{p}^{\delta }\parallel _{H^{1}\left( \Omega
\right) }\leq l_{2}\left( \parallel y_{n}^{\delta,1
}-\bar{y}^{\delta }\parallel _{L^{2}\left( \Omega \right)
}+\parallel \varphi _{n-1}^{\delta,1 }- \bar{\varphi}^{\delta
}\parallel _{L^{2}\left( \Omega \right) }+\parallel \psi
_{n-1}^{\delta,1 }- \bar{\psi}^{\delta }\parallel _{L^{2}\left(
\Omega \right) }\right), \label{3.8}
\end{equation*}

where $l_{2}=C+\tfrac{C\tilde{\rho_{3}}}{\delta}$.

\end{remark}

\begin{lemma}\label{lem:ll}
Let $\bar{\lambda}^{\delta }$ in $\mathcal{U}$ be the solution of
the following equation
\begin{equation*}
\bar{\lambda}^{\delta } = \nu \Delta \bar{\varphi}^{\delta }+\beta
_{\delta }^{\prime }\left( \bar{y}^{\delta }-\bar{\varphi}^{\delta
}\right) \bar{p}^{\delta },
\end{equation*}

since

\begin{equation*}
\parallel \bar{p}^{\delta }\parallel _{H^{1}\left( \Omega \right) }\leq
\tilde{\rho}_{3} \label{3.9}
\end{equation*}

we obtain

\begin{equation*}
\parallel \lambda _{n}^{\delta }-\bar{\lambda}^{\delta }\parallel
_{L^{2}\left( \Omega \right) }\leq k_{\lambda}\left( \parallel y_{n}^{\delta }-%
\bar{y}^{\delta }\parallel _{L^{2}\left( \Omega \right)
}+\parallel \varphi _{n-1}^{\delta }-\bar{\varphi}^{\delta
}\parallel _{L^{2}\left( \Omega \right) }+\parallel p_{n}^{\delta
}-\bar{p}^{\delta }\parallel _{L^{2}\left( \Omega \right) }
\right) \label{3.11a}
\end{equation*}

where $ k_{\lambda}:=\dfrac{C}{\delta}$ and $\tilde{\rho}_{3}\leq
C\delta \nu $.
\end{lemma}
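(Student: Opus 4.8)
The goal is a Lipschitz-type estimate for the difference $\lambda_n^\delta - \bar\lambda^\delta$, where $\lambda_n^\delta = \nu\Delta\varphi_{n-1}^\delta + \beta_\delta'(y_n^\delta - \varphi_{n-1}^\delta)p_n^\delta$ and $\bar\lambda^\delta = \nu\Delta\bar\varphi^\delta + \beta_\delta'(\bar y^\delta - \bar\varphi^\delta)\bar p^\delta$. The natural first step is to subtract the two defining equations and split the result into the $\nu\Delta$-part and the $\beta_\delta'$-part:
\begin{equation*}
\lambda_n^\delta - \bar\lambda^\delta = \nu\Delta(\varphi_{n-1}^\delta - \bar\varphi^\delta) + \bigl(\beta_\delta'(y_n^\delta - \varphi_{n-1}^\delta)p_n^\delta - \beta_\delta'(\bar y^\delta - \bar\varphi^\delta)\bar p^\delta\bigr).
\end{equation*}
For the first term I would use the $H^2$-regularity of the controls: $\|\nu\Delta(\varphi_{n-1}^\delta - \bar\varphi^\delta)\|_{L^2(\Omega)} \le C\nu\|\varphi_{n-1}^\delta - \bar\varphi^\delta\|_{H^2(\Omega)}$, but since the statement asks for an $L^2$-bound in terms of $\|\varphi_{n-1}^\delta - \bar\varphi^\delta\|_{L^2(\Omega)}$, I expect one actually invokes the a priori $H^2$-bound $\|\varphi_{n-1}^\delta\|_{H^2} \le \tilde\rho_1$ (or the analogous bound from the earlier corollaries) so that this term is absorbed into a constant times the $L^2$-norms — this is where the hypotheses on membership in the relevant $H^2$-balls are used.

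For the second, nonlinear term, the key tool is Lemma~\ref{lm:b1}, applied with the substitution $(y_2^\delta,\varphi_2^\delta,p_2^\delta) = (y_n^\delta,\varphi_{n-1}^\delta,p_n^\delta)$ and $(y_1^\delta,\varphi_1^\delta,p_1^\delta) = (\bar y^\delta,\bar\varphi^\delta,\bar p^\delta)$. This requires $\bar p^\delta \in \mathcal{B}_{H^1_0(\Omega)}(0,\tilde\rho_3)$, which is exactly the hypothesis $\|\bar p^\delta\|_{H^1(\Omega)} \le \tilde\rho_3$ assumed in the statement. Lemma~\ref{lm:b1} then yields
\begin{equation*}
\|\beta_\delta'(y_n^\delta - \varphi_{n-1}^\delta)p_n^\delta - \beta_\delta'(\bar y^\delta - \bar\varphi^\delta)\bar p^\delta\|_{L^2(\Omega)} \le \frac{C}{\delta}\|p_n^\delta - \bar p^\delta\|_{H^1(\Omega)} + \frac{C\tilde\rho_3}{\delta}\|y_n^\delta - \bar y^\delta\|_{L^2(\Omega)} + \frac{C\tilde\rho_3}{\delta}\|\varphi_{n-1}^\delta - \bar\varphi^\delta\|_{L^2(\Omega)}.
\end{equation*}
Combining this with the bound on the $\Delta$-term via the triangle inequality, and collapsing all the $\delta$- and $\tilde\rho_3$-dependent factors into a single constant $k_\lambda := C/\delta$ (legitimate under the standing assumption $\delta \le C$ and $\tilde\rho_3 \le C\delta\nu$, so that $\tilde\rho_3/\delta \le C\nu \le C$), gives precisely the claimed inequality.

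**Main obstacle.** The only delicate point is the treatment of the $\nu\Delta(\varphi_{n-1}^\delta - \bar\varphi^\delta)$ term: a genuine $\Delta$ cannot be controlled by an $L^2$-norm of the difference alone. The resolution must be that $\varphi_{n-1}^\delta$ and $\bar\varphi^\delta$ both lie in a fixed $H^2$-ball (from Corollary~\ref{cor:c1} and its successors, together with the bound $\|\bar p^\delta\|_{H^1}\le\tilde\rho_3$), so that $\|\nu\Delta(\varphi_{n-1}^\delta - \bar\varphi^\delta)\|_{L^2} \le 2C\nu\tilde\rho_1 \le C$, a pure constant that is then folded into the others. I would make this the first careful step of the proof, then append the Lemma~\ref{lm:b1} estimate, and finish with one line collecting constants.
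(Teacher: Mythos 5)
Your decomposition and your use of Lemma \ref{lm:b1} for the nonlinear term coincide exactly with the paper's proof, and you have correctly identified the delicate point. However, your proposed resolution of that point does not work. You bound $\Vert\nu\Delta(\varphi_{n-1}^{\delta}-\bar\varphi^{\delta})\Vert_{L^{2}(\Omega)}$ by the fixed constant $2C\nu\tilde\rho_{1}$ via the a priori $H^{2}$-ball bounds and then propose to ``fold'' it into $k_{\lambda}$. That is not legitimate: the claimed inequality is homogeneous in the differences, so its right-hand side tends to $0$ as $(y_{n}^{\delta},\varphi_{n-1}^{\delta},p_{n}^{\delta})\to(\bar y^{\delta},\bar\varphi^{\delta},\bar p^{\delta})$, whereas your bound leaves an additive constant that does not vanish in that limit. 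An additive constant can never be absorbed into a purely multiplicative Lipschitz-type estimate, so your argument does not establish the stated inequality. (You are right that a genuine $\Delta$ cannot be controlled by the $L^{2}$-norm of the difference alone; the conclusion to draw is that the $L^{2}$-norms in the statement cannot all be taken literally, not that the term can be replaced by a constant.)

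What the paper actually does is keep the stronger norm: it estimates $\Vert\nu\Delta(\varphi_{n-1}^{\delta}-\bar\varphi^{\delta})\Vert_{L^{2}(\Omega)}\le \nu\Vert\varphi_{n-1}^{\delta}-\bar\varphi^{\delta}\Vert_{H^{2}(\Omega)}$, adds the Lemma \ref{lm:b1} bound for the $\beta_{\delta}'$-term, and obtains
\begin{equation*}
\Vert\lambda_{n}^{\delta}-\bar\lambda^{\delta}\Vert_{L^{2}(\Omega)}\le \Bigl(\nu+\tfrac{C\tilde\rho_{3}}{\delta}\Bigr)\Vert\varphi_{n-1}^{\delta}-\bar\varphi^{\delta}\Vert_{H^{2}(\Omega)}+\tfrac{C}{\delta}\Vert p_{n}^{\delta}-\bar p^{\delta}\Vert_{H^{1}(\Omega)}+\tfrac{C\tilde\rho_{3}}{\delta}\Vert y_{n}^{\delta}-\bar y^{\delta}\Vert_{L^{2}(\Omega)},
\end{equation*}
whence $k_{\lambda}=\max\{\nu+\tfrac{C\tilde\rho_{3}}{\delta},\tfrac{C}{\delta},\tfrac{C\tilde\rho_{3}}{\delta}\}=\tfrac{C}{\delta}$. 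In other words, the version of the lemma that is actually proven (and then used in Theorem \ref{thr2}) measures the $\varphi$-difference in $H^{2}(\Omega)$ and the $p$-difference in $H^{1}(\Omega)$; the $L^{2}$-norms written in the statement are an imprecision of the paper. If you replace your constant-absorption step by this straightforward norm bookkeeping, the rest of your argument becomes the paper's proof.
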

\begin{proof}
From step $3$ of the continuous version of the algorithm $2$, and
by Lemma \ref{lm:b1}, we get
\begin{multline*}
\parallel
\lambda_{n}^{\delta }-\bar{\lambda}^{\delta }\parallel
_{L^{2}\left( \Omega \right) } \leq (\nu
+\dfrac{C\tilde{\rho}_{3}}{\delta})\parallel  \varphi
_{n-1}^{\delta }-\bar{\varphi}^{\delta }\parallel _{H^{2}\left(
\Omega \right) }+\dfrac{C}{\delta}\parallel p_{n}^{\delta }-
\bar{p}^{\delta }\parallel _{H^{1}\left( \Omega \right)
}+\\
 +\parallel y_{n}^{\delta
}-\bar{y}^{\delta }\parallel _{L^{2}\left( \Omega \right) },
\end{multline*}

then, we get

\begin{multline*}
\parallel
\lambda_{n}^{\delta }-\bar{\lambda}^{\delta }\parallel
_{L^{2}\left( \Omega \right) } \leq k_{\lambda}(\parallel  \varphi
_{n-1}^{\delta }-\bar{\varphi}^{\delta }\parallel _{H^{2}\left(
\Omega \right) }+\parallel p_{n}^{\delta }- \bar{p}^{\delta
}\parallel _{H^{1}\left( \Omega \right)
}+\\
 \dfrac{C\tilde{\rho}_{3}}{\delta}\parallel y_{n}^{\delta
}-\bar{y}^{\delta }\parallel _{L^{2}\left( \Omega \right) }),
\end{multline*}

where

\begin{equation*}
k_{\lambda}:=max\{(\nu
+\dfrac{C\tilde{\rho}_{3}}{\delta}),\dfrac{C}{\delta},\dfrac{C\tilde{\rho}_{3}}{\delta}\}=\dfrac{C}{\delta}.
\end{equation*}
\end{proof}
\begin{lemma}\label{lem:ll}
Let $\bar{\varphi}^{\delta }$ in $\mathcal{U}$ be the solution of
$\left( \ref{3.7}\right)$, since

\begin{equation*}
\parallel \bar{p}^{\delta }\parallel _{H^{1}\left( \Omega \right) }\leq
\tilde{\rho}_{3},  \label{3.9}
\end{equation*}

where $ \omega_{\varphi }$ is strictly positive, such that

\begin{equation*}
\dfrac{\delta \nu C+C\tilde{\rho}_{3}}{\left( C+\delta \nu
C-C\tilde{\rho}_{3}\right) }\leq \omega_{\varphi }\leq 1,
\label{3.10}
\end{equation*}

and
\begin{equation*}
\omega _{\varphi }<\dfrac{\delta^2 \nu
C-C\delta\tilde{\rho}_{3}}{C\delta+C\tilde{\rho}_{3}},
\end{equation*}

we obtain

\begin{multline}
\parallel \varphi _{n}^{\delta }-\bar{\varphi}^{\delta }\parallel
_{H^{2}\left( \Omega \right) }\leq k_{3}( \parallel y_{n}^{\delta }-%
\bar{y}^{\delta }\parallel _{H^{1}\left( \Omega \right)
}+\parallel \varphi _{n-1}^{\delta }-\bar{\varphi}^{\delta
}\parallel _{H^{2}\left( \Omega \right) }+\\
+\parallel p_{n}^{\delta }-\bar{p}^{\delta }\parallel
_{H^{1}\left( \Omega \right) } +
\parallel \lambda _{n}^{\delta
}-\bar{\lambda}^{\delta }\parallel _{{H^{1}\left( \Omega \right)
}}), \label{3.11}
\end{multline}

where $ k_{3}:=\omega_{\varphi }\dfrac{C}{\delta \nu
C-C\tilde{\rho}_{3}}$ and $\tilde{\rho}_{3}\leq C\delta \nu $.
\end{lemma}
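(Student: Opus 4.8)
The plan is to derive the estimate directly from Step~5 of Algorithm~2, where $\varphi_n^\delta$ is obtained by one damped Newton step applied to equation \eqref{3.7}. First I would write the Newton update as
\begin{equation*}
\varphi_n^\delta-\varphi_{n-1}^\delta = -\omega_\varphi\,\bigl(\nu\Delta-\beta_\delta''(y_n^\delta-\varphi_{n-1}^\delta)p_n^\delta\bigr)^{-1}\bigl(\nu\Delta\varphi_{n-1}^\delta+\beta_\delta'(y_n^\delta-\varphi_{n-1}^\delta)p_n^\delta-\lambda_n^\delta\bigr),
\end{equation*}
and subtract the fixed-point identity $\bar\varphi^\delta=\bar\varphi^\delta-\omega_\varphi(\nu\Delta-\beta_\delta''(\bar y^\delta-\bar\varphi^\delta)\bar p^\delta)^{-1}(\nu\Delta\bar\varphi^\delta+\beta_\delta'(\bar y^\delta-\bar\varphi^\delta)\bar p^\delta-\bar\lambda^\delta)$, which holds because $\bar\varphi^\delta$ solves \eqref{3.7} exactly so the residual there vanishes.

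Next I would test the resulting equation for $\varphi_n^\delta-\bar\varphi^\delta$ against $\varphi_n^\delta-\bar\varphi^\delta$ itself, using the coercivity hypothesis H\textsubscript{2} of $\sigma(\cdot,\cdot)$ to bound $\|\varphi_n^\delta-\bar\varphi^\delta\|_{H^2(\Omega)}$ from below by the elliptic part. On the right-hand side the differences of the $\beta_\delta'$ terms are handled exactly as in Lemma~\ref{lm:b1}: the Mean-Value Theorem gives a factor $C/\delta$ and, because $\bar p^\delta\in\mathcal B_{H^1}(0,\tilde\rho_3)$, a factor $C\tilde\rho_3/\delta$ on the $y$- and $\varphi$-differences; the linear terms $\nu\Delta(\varphi_{n-1}^\delta-\bar\varphi^\delta)$ contribute the $\|\varphi_{n-1}^\delta-\bar\varphi^\delta\|_{H^2(\Omega)}$ term, and the residual carries the $\|\lambda_n^\delta-\bar\lambda^\delta\|_{H^1(\Omega)}$ term. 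Collecting and dividing through yields \eqref{3.11} with $k_3:=\omega_\varphi\,C/(\delta\nu C-C\tilde\rho_3)$, provided the denominator is positive, i.e. $\tilde\rho_3\le C\delta\nu$, which is the stated side condition.

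The main obstacle is ensuring the linearized operator $\nu\Delta-\beta_\delta''(y_n^\delta-\varphi_{n-1}^\delta)p_n^\delta$ is boundedly invertible with an inverse whose norm is controlled by $1/(\nu-C\tilde\rho_3/\delta)$-type quantities; this is where the two-sided constraint on $\omega_\varphi$ and the bound $\tilde\rho_3\le C\delta\nu$ enter, exactly so that the perturbation of $-\Delta$ by the second-derivative term stays coercive and the damping does not overshoot. I expect the bookkeeping of which norm ($L^2$, $H^1$, or $H^2$) each difference is measured in — and absorbing the $\|\varphi_n^\delta-\bar\varphi^\delta\|$ term that appears on the right (as in the passage from the first to the second displayed inequality in the proof of Lemma~\ref{lm:l3}) — to be the only genuinely delicate point; everything else is a routine repetition of the estimates already established in Lemmas~\ref{lm:b1}, \ref{lm:b2} and Proposition form \eqref{c1}.
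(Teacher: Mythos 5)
Your plan follows essentially the same route as the paper: form the error equation from Step~5 of Algorithm~2 against the fixed-point identity for $\bar{\varphi}^{\delta}$ solving \eqref{3.7}, apply the continuity and coercivity conditions H\textsubscript{1}--H\textsubscript{2} together with the Mean-Value estimates of Lemma~\ref{lm:b1} (using $\|\bar{p}^{\delta}\|_{H^{1}}\leq\tilde{\rho}_{3}$), and divide by the coercive factor $\left(\delta\nu C-C\tilde{\rho}_{3}\right)/\delta$, which is exactly what the paper does and why $\tilde{\rho}_{3}\leq C\delta\nu$ is required. The only small discrepancy is in your accounting of the constraints on $\omega_{\varphi}$: in the paper's derivation the lower bound $\tfrac{\delta\nu C+C\tilde{\rho}_{3}}{C+\delta\nu C-C\tilde{\rho}_{3}}\leq\omega_{\varphi}$ is what collapses the coefficient $\tfrac{(1-\omega_{\varphi})\delta\nu C+(1+\omega_{\varphi})C\tilde{\rho}_{3}}{\delta\nu C-C\tilde{\rho}_{3}}$ of $\|\varphi_{n-1}^{\delta}-\bar{\varphi}^{\delta}\|_{H^{2}}$ down to $k_{3}$, rather than serving only to keep the linearized operator invertible.
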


\begin{proof}
From step 5 of the continuous version of the algorithm 2, by the continuity and coercivity conditions H\textsubscript{1} and
H\textsubscript{2} of $\sigma\left( \cdot , \cdot \right)$, we obtain

\begin{equation*}
\begin{split}
\left( \tfrac{\delta \nu C-C\tilde{\rho}_{3}}{\delta }\right)
\parallel \varphi _{n}^{\delta }-\bar{\varphi}^{\delta }\parallel
_{{H^{2}\left(
\Omega \right) }}&\leq \left( \tfrac{\left( 1-\omega_{\varphi }\right)\delta\nu C+%
\left( 1+\omega_{\varphi }\right)C\tilde{\rho}_{3}}{\delta
}\right) \parallel \varphi _{n-1}^{\delta }-\bar{\varphi}^{\delta
}\parallel
_{{H^{2}\left( \Omega \right) }}+\\
 &+\omega_{\varphi
}\tfrac{C\tilde{\rho}_{3}}{\delta }\parallel y_{n}^{\delta
}-\bar{y}^{\delta }\parallel _{{H^{1}\left( \Omega \right)
}}+\omega_{\varphi }\tfrac{C}{\delta }\parallel p_{n}^{\delta }+\bar{p%
}^{\delta }\parallel _{{H^{1}\left( \Omega \right) }}+\\
& + \omega_{\varphi } C\parallel \lambda _{n}^{\delta
}-\bar{\lambda}^{\delta }\parallel _{{H^{1}\left( \Omega \right)
}}.
\end{split}
\end{equation*}

Finally, we obtain

\begin{equation*}
\begin{split}
\parallel \varphi _{n}^{\delta }-\bar{\varphi}^{\delta }\parallel
_{{H^{2}\left( \Omega \right) }}&\leq \left( \dfrac{\left( 1-\omega_{\varphi }\right)\delta\nu C+%
\left( 1+\omega_{\varphi }\right)C\tilde{\rho}_{3}}{\delta \nu
C-C\tilde{\rho}_{3}
}\right) \parallel \varphi _{n-1}^{\delta }-\bar{\varphi}%
^{\delta }\parallel _{{H^{2}\left( \Omega \right) }}+\\
&+\omega_{\varphi }\dfrac{C\tilde{\rho}_{3}}{\delta \nu
C-C\tilde{\rho}_{3}}\parallel y_{n}^{\delta }-\bar{y}^{\delta
}\parallel _{{H^{1}\left(
\Omega \right)}}+\omega_{\varphi }\dfrac{C}{\delta \nu C-C%
\tilde{\rho}_{3}}\parallel p_{n}^{\delta }-\bar{p}^{\delta
}\parallel _{{H^{1}\left( \Omega \right)}}+\\
& +\left( \tfrac{\delta \omega_{\varphi } C }{\delta \nu
C-C\tilde{\rho}_{3}}\right)\parallel \lambda _{n}^{\delta
}-\bar{\lambda}^{\delta }\parallel _{{H^{1}\left( \Omega \right)
}}.
\end{split}
\end{equation*}
\end{proof}

\begin{lemma}\label{lem:llpsi}
Let $\bar{\psi}^{\delta }$ in $\mathcal{U}$ be the solution of
\eqref{3..7}, since

\begin{equation*}
\parallel \bar{p}^{\delta }\parallel _{H^{1}\left( \Omega \right) }\leq
\tilde{\rho}_{3}  \label{3.9}
\end{equation*}

where $ \omega_{\psi }$ is strictly positive, such that

\begin{equation*}
\dfrac{\delta \nu C+C\tilde{\rho}_{3}}{\left( C+\delta \nu
C-C\tilde{\rho}_{3}\right) }\leq \omega_{\psi }\leq 1
\label{3..10}
\end{equation*}

and

\begin{equation*}
\omega _{\psi }<\dfrac{\delta^2 \nu
C-C\delta\tilde{\rho}_{3}}{C\delta+C\tilde{\rho}_{3}}.
\end{equation*}

Then, we obtain

\begin{multline*}
\parallel \psi _{n}^{\delta }-\bar{\psi}^{\delta }\parallel
_{H^{2}\left( \Omega \right) }\leq k_{2}( \parallel y_{n}^{\delta }-%
\bar{y}^{\delta }\parallel _{H^{1}\left( \Omega \right)
}+\parallel \psi _{n-1}^{\delta }-\bar{\psi}^{\delta }\parallel
_{H^{2}\left( \Omega \right) }+\parallel p_{n}^{\delta
}-\bar{p}^{\delta }\parallel _{H^{1}\left( \Omega \right) }+\\
+ \parallel \lambda _{n}^{\delta }-\bar{\lambda}^{\delta
}\parallel _{{H^{1}\left( \Omega \right) }}) \label{3..11}
\end{multline*}

where $\tilde{\rho}_{3}\leq C\delta \nu $ and $
k_{2}:=\omega_{\psi }\dfrac{C}{\delta \nu C-C\tilde{\rho}_{3}}$.
\end{lemma}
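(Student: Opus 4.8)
The plan is to run, mutatis mutandis, the argument of the preceding lemma for $\varphi$, interchanging the roles of $\varphi$ and $\psi$ and using step~4 of the continuous version of Algorithm~2 in place of step~5. First I would record the damped-Newton update for $\psi$ coming from step~4,
\begin{equation*}
\psi_{n}^{\delta} = \psi_{n-1}^{\delta} + r_{n}^{\delta}, \qquad \bigl(\nu\Delta + \beta_{\delta}^{\prime\prime}(\psi_{n-1}^{\delta} - y_{n}^{\delta})\,p_{n}^{\delta}\bigr)\,r_{n}^{\delta} = -\omega_{\psi}\bigl(\nu\Delta\psi_{n-1}^{\delta} + \beta_{\delta}^{\prime}(\psi_{n-1}^{\delta} - y_{n}^{\delta})\,p_{n}^{\delta} + \lambda_{n}^{\delta}\bigr),
\end{equation*}
together with the fixed-point identity \eqref{3..7} satisfied by $\bar{\psi}^{\delta}$. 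Subtracting the two and isolating $\psi_{n}^{\delta} - \bar{\psi}^{\delta}$ produces a linear elliptic equation for $\psi_{n}^{\delta} - \bar{\psi}^{\delta}$ whose right-hand side combines $y_{n}^{\delta} - \bar{y}^{\delta}$, $\psi_{n-1}^{\delta} - \bar{\psi}^{\delta}$, $p_{n}^{\delta} - \bar{p}^{\delta}$ and $\lambda_{n}^{\delta} - \bar{\lambda}^{\delta}$, and which carries the zeroth-order perturbation $\beta_{\delta}^{\prime\prime}(\psi_{n-1}^{\delta} - y_{n}^{\delta})\,p_{n}^{\delta}$ acting on $\psi_{n}^{\delta} - \bar{\psi}^{\delta}$.

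Next I would test this equation against $\psi_{n}^{\delta} - \bar{\psi}^{\delta}$ and apply the coercivity condition H\textsubscript{2} of $\sigma(\cdot,\cdot)$. Since $\beta_{\delta}^{\prime\prime}$ is bounded by $2/\delta$ and $\|p_{n}^{\delta}\|_{H^{1}(\Omega)} \le \tilde{\rho}_{3}$ (Corollary~\ref{cor:c3}, with $\|\bar{p}^{\delta}\|_{H^{1}(\Omega)} \le \tilde{\rho}_{3}$ by hypothesis), the perturbation is bounded in $L^{2}(\Omega)$ by $C\tilde{\rho}_{3}/\delta$; absorbing it into the coercive term leaves the coefficient $(\delta\nu C - C\tilde{\rho}_{3})/\delta$ on the left, which is strictly positive precisely because of the assumption $\tilde{\rho}_{3} \le C\delta\nu$. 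On the right-hand side I would bound the difference of the two $\beta_{\delta}^{\prime}$-products by the Lipschitz-type estimates of Lemmas~\ref{lm:b1} and \ref{lm:b2} (with the argument $\psi - y$), controlling it by $\tfrac{C}{\delta}\bigl(\|y_{n}^{\delta} - \bar{y}^{\delta}\|_{L^{2}(\Omega)} + \|\psi_{n-1}^{\delta} - \bar{\psi}^{\delta}\|_{L^{2}(\Omega)}\bigr)$ plus the $\|p_{n}^{\delta} - \bar{p}^{\delta}\|$ contribution, and leaving $\|\lambda_{n}^{\delta} - \bar{\lambda}^{\delta}\|$ untouched. Dividing through then yields an inequality of the announced form, with the coefficient $\bigl((1-\omega_{\psi})\delta\nu C + (1+\omega_{\psi})C\tilde{\rho}_{3}\bigr)/(\delta\nu C - C\tilde{\rho}_{3})$ in front of $\|\psi_{n-1}^{\delta} - \bar{\psi}^{\delta}\|_{H^{2}(\Omega)}$ and $\omega_{\psi}C/(\delta\nu C - C\tilde{\rho}_{3})$ in front of the three remaining differences.

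The main point is then the bookkeeping of the damping parameter: the lower bound $\omega_{\psi} \ge (\delta\nu C + C\tilde{\rho}_{3})/(C + \delta\nu C - C\tilde{\rho}_{3})$ assumed in the statement is exactly the inequality that lets one dominate the coefficient of $\|\psi_{n-1}^{\delta} - \bar{\psi}^{\delta}\|_{H^{2}(\Omega)}$ by the common factor $k_{2} := \omega_{\psi}C/(\delta\nu C - C\tilde{\rho}_{3})$, so that the whole estimate collapses to the single constant $k_{2}$; the condition $\omega_{\psi} \le 1$ together with $\tilde{\rho}_{3} \le C\delta\nu$ keeps every denominator positive, and the remaining restriction on $\omega_{\psi}$ is what will be needed downstream to make the resulting iteration contractive. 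One should pause briefly on the sign of $\beta_{\delta}^{\prime\prime}(\psi_{n-1}^{\delta} - y_{n}^{\delta})$, which is opposite to the sign $-\beta_{\delta}^{\prime\prime}(y_{n}^{\delta} - \varphi_{n-1}^{\delta})$ appearing in the $\varphi$-equation; this causes no trouble, since $\beta_{\delta}$ is monotone, so $\beta_{\delta}^{\prime\prime}$ has a fixed sign on each of its three branches, and in either case this zeroth-order term only needs to be dominated in $L^{2}(\Omega)$ and never threatens the coercivity. Everything else being verbatim the computation of the preceding lemma, no genuinely new difficulty arises.
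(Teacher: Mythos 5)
Your proposal is correct and follows essentially the same route as the paper: subtract the step-4 damped-Newton update from the fixed-point identity \eqref{3..7}, use coercivity (H\textsubscript{2}) to absorb the $\beta_{\delta}^{\prime\prime}(\psi_{n-1}^{\delta}-y_{n}^{\delta})p_{n}^{\delta}$ perturbation into the coefficient $(\delta\nu C-C\tilde{\rho}_{3})/\delta$, and bound the remaining terms via the Lipschitz estimates on $\beta_{\delta}^{\prime}$, arriving at exactly the paper's intermediate inequality with coefficient $\bigl((1-\omega_{\psi})\delta\nu C+(1+\omega_{\psi})C\tilde{\rho}_{3}\bigr)/(\delta\nu C-C\tilde{\rho}_{3})$ on $\|\psi_{n-1}^{\delta}-\bar{\psi}^{\delta}\|_{H^{2}(\Omega)}$. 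Your additional observation that the lower bound on $\omega_{\psi}$ is precisely what lets this coefficient be dominated by $k_{2}$ is a step the paper leaves implicit but which is needed to reach the stated conclusion.
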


\begin{proof}
From step 4 of the continuous version of the algorithm $2$, by the continuity and coercivity conditions H\textsubscript{1} and
H\textsubscript{2} of $\sigma\left( \cdot , \cdot \right)$, we get

\begin{equation*}
\begin{split}
\left( \tfrac{\delta \nu C-C\tilde{\rho}_{3}}{\delta }\right)
\parallel \psi _{n}^{\delta }-\bar{\psi}^{\delta }\parallel
_{{H^{2}\left(
\Omega \right) }}&\leq \left( \tfrac{\left( 1-\omega_{\psi }\right)\delta\nu C+%
\left( 1+\omega_{\psi }\right)C\tilde{\rho}_{3}}{\delta }\right)
\parallel \psi _{n-1}^{\delta }-\bar{\psi}^{\delta
}\parallel
_{{H^{2}\left( \Omega \right) }}+\\
 &+\omega_{\psi
}\tfrac{C\tilde{\rho}_{3}}{\delta }\parallel y_{n}^{\delta
}-\bar{y}^{\delta }\parallel _{{H^{1}\left( \Omega \right)
}}+\omega_{\psi }\tfrac{C}{\delta }\parallel p_{n}^{\delta }-\bar{p%
}^{\delta }\parallel _{{H^{1}\left( \Omega \right) }} +\\
& + \omega_{\psi }C\parallel \lambda _{n}^{\delta
}-\bar{\lambda}^{\delta }\parallel _{{H^{1}\left( \Omega \right)
}} .
\end{split}
\end{equation*}

Finally, we obtain
\begin{equation*}
\begin{split}
\parallel \psi _{n}^{\delta }-\bar{\psi}^{\delta }\parallel
_{{H^{2}\left( \Omega \right) }}&\leq \left( \dfrac{\left( 1-\omega_{\psi }\right)\delta\nu C+%
\left( 1+\omega_{\psi }\right)C\tilde{\rho}_{3}}{\delta \nu
C-C\tilde{\rho}_{3}
}\right) \parallel \psi _{n-1}^{\delta }-\bar{\psi}%
^{\delta }\parallel _{{H^{2}\left( \Omega \right) }}+\\
&+\omega_{\psi }\dfrac{C\tilde{\rho}_{3}}{\delta \nu
C-C\tilde{\rho}_{3}}\parallel y_{n}^{\delta }-\bar{y}^{\delta
}\parallel _{{H^{1}\left(
\Omega \right)}}+\omega_{\psi }\dfrac{C}{\delta \nu C-C%
\tilde{\rho}_{3}}\parallel p_{n}^{\delta }-\bar{p}^{\delta
}\parallel _{{H^{1}\left( \Omega \right)}}+\\
& + \left( \tfrac{\omega_{\psi }C \delta }{\delta \nu
C-C\tilde{\rho}_{3}}\right)\parallel \lambda _{n}^{\delta
}-\bar{\lambda}^{\delta }\parallel _{{H^{1}\left( \Omega \right)
}}.
\end{split}
\end{equation*}
\end{proof}

\begin{lemma}
Let $y_{n}^{\delta }$ in $\mathcal{U}$ be the solution of
\eqref{3.5}, since the condition \eqref{3.9} of previous Lemma
\ref{lem:llpsi} is fulfilled, where

\begin{equation*}
\left( \dfrac{\delta C+C-\delta }{\delta C+C}\right)
<\omega_{y}\leq \dfrac{\left( \delta C+C\right) }{\left( \delta
C+2C\right) } \leq 1,  \label{3.12}
\end{equation*}

we get

\begin{equation*}
\parallel y_{n}^{\delta }-\bar{y}^{\delta }\parallel _{{H^{1}\left(
\Omega \right) }}\leq k_{1}\left\{ \parallel e_{n-1}^{\delta
}-\bar{e}^{\delta }
\parallel _{\mathcal{V}}^{2}+\parallel e_{n-1}^{\delta }-\bar{e}^{\delta }\parallel
_{\mathcal{V}}\right\} \label{3.13}
\end{equation*}

where $k_{1}:=\left( 1-\omega_{y}\right) \left( C+\dfrac{C}{\delta
}\right),$ $e_{n-1}^{\delta }:=\left( y_{n-1}^{\delta },\varphi
_{n-1}^{\delta },\psi _{n-1}^{\delta }\right) $, $\bar{e}^{\delta
}:=\left( \bar{y}^{\delta },\bar{\varphi}^{\delta
},\bar{\psi}^{\delta }\right)$ and $\mathcal{V}:={H^{1}\left(
\Omega \right) \times H^{2}\left( \Omega \right)\times H^{2}\left(
\Omega \right) }$.
\end{lemma}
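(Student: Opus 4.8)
The plan is to recognise Step~1 of Algorithm~2 together with the limit equation \eqref{3.5} as a damped Newton scheme for a single nonlinear map and then carry out the classical damped--Newton error analysis. Introduce $G(y;\varphi,\psi):=Ay+\beta_\delta(y-\varphi)-\beta_\delta(\psi-y)-f$, whose derivative in the $y$ variable is the perturbed elliptic operator $L(y;\varphi,\psi):=A+\beta_\delta^{\prime}(y-\varphi)+\beta_\delta^{\prime}(\psi-y)$, and set $L_{n-1}:=L(y_{n-1}^{\delta};\varphi_{n-1}^{\delta},\psi_{n-1}^{\delta})$. Then Step~1 reads $y_{n}^{\delta}=y_{n-1}^{\delta}-\omega_{y}\,L_{n-1}^{-1}G(y_{n-1}^{\delta};\varphi_{n-1}^{\delta},\psi_{n-1}^{\delta})$, whereas \eqref{3.5} says $G(\bar{y}^{\delta};\bar{\varphi}^{\delta},\bar{\psi}^{\delta})=0$. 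Since $\beta_\delta^{\prime}\ge0$ (clear from its explicit formula), the form $\sigma(\cdot,\cdot)$ perturbed by the nonnegative zeroth--order term $\beta_\delta^{\prime}(y_{n-1}^{\delta}-\varphi_{n-1}^{\delta})+\beta_\delta^{\prime}(\psi_{n-1}^{\delta}-y_{n-1}^{\delta})$ is coercive on $H_0^1(\Omega)$ with a constant depending only on H\textsubscript{2}; hence $L_{n-1}$ is invertible and $\|L_{n-1}^{-1}g\|_{H^1(\Omega)}\le C\|g\|_{L^2(\Omega)}$ uniformly in $n$ and $\delta$ (the boundedness of $\Omega$ being used through $L^2(\Omega)\hookrightarrow H^{-1}(\Omega)$).

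Next I would subtract and use the integral form of the mean value theorem for $y\mapsto G(y;\varphi_{n-1}^{\delta},\psi_{n-1}^{\delta})$ between $\bar{y}^{\delta}$ and $y_{n-1}^{\delta}$, namely $G(y_{n-1}^{\delta};\varphi_{n-1}^{\delta},\psi_{n-1}^{\delta})=\big(G(\bar{y}^{\delta};\varphi_{n-1}^{\delta},\psi_{n-1}^{\delta})-G(\bar{y}^{\delta};\bar{\varphi}^{\delta},\bar{\psi}^{\delta})\big)+\int_{0}^{1}M_{t}\,dt\,(y_{n-1}^{\delta}-\bar{y}^{\delta})$ with $M_{t}:=L(\bar{y}^{\delta}+t(y_{n-1}^{\delta}-\bar{y}^{\delta});\varphi_{n-1}^{\delta},\psi_{n-1}^{\delta})$, together with $G(\bar y^\delta;\bar\varphi^\delta,\bar\psi^\delta)=0$. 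This yields the error identity
\begin{multline*}
y_{n}^{\delta}-\bar{y}^{\delta}=(1-\omega_{y})(y_{n-1}^{\delta}-\bar{y}^{\delta})+\omega_{y}L_{n-1}^{-1}\int_{0}^{1}(L_{n-1}-M_{t})(y_{n-1}^{\delta}-\bar{y}^{\delta})\,dt\\
-\omega_{y}L_{n-1}^{-1}\big(G(\bar{y}^{\delta};\varphi_{n-1}^{\delta},\psi_{n-1}^{\delta})-G(\bar{y}^{\delta};\bar{\varphi}^{\delta},\bar{\psi}^{\delta})\big).
\end{multline*}
I would then estimate the three terms in $H^{1}(\Omega)$. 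The first is exactly $(1-\omega_{y})\|y_{n-1}^{\delta}-\bar{y}^{\delta}\|_{H^{1}(\Omega)}$. In the second, the principal part of $L_{n-1}-M_{t}$ cancels, so it is multiplication by a function bounded pointwise --- since $\beta_\delta^{\prime}$ is $\tfrac{C}{\delta}$--Lipschitz --- by $\tfrac{C}{\delta}(1-t)|y_{n-1}^{\delta}-\bar{y}^{\delta}|$; combining $\|L_{n-1}^{-1}\cdot\|_{H^1}\le C\|\cdot\|_{L^2}$, the Sobolev embedding $H^{1}(\Omega)\hookrightarrow L^{4}(\Omega)$ and $\int_{0}^{1}(1-t)\,dt=\tfrac12$, the second term is $\le\omega_{y}\tfrac{C}{\delta}\|y_{n-1}^{\delta}-\bar{y}^{\delta}\|_{H^{1}(\Omega)}^{2}$ --- this is the source of the quadratic term. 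In the third, $A\bar{y}^{\delta}$ and $f$ cancel, leaving differences of $\beta_\delta$ in the obstacle slots; by the $\tfrac1\delta$--Lipschitz continuity of $\beta_\delta$ this is $\le\tfrac1\delta(\|\varphi_{n-1}^{\delta}-\bar\varphi^{\delta}\|_{L^2(\Omega)}+\|\psi_{n-1}^{\delta}-\bar\psi^{\delta}\|_{L^2(\Omega)})$ in $L^{2}$, so the third term is $\le\omega_{y}\tfrac{C}{\delta}(\|\varphi_{n-1}^{\delta}-\bar\varphi^{\delta}\|_{L^2(\Omega)}+\|\psi_{n-1}^{\delta}-\bar\psi^{\delta}\|_{L^2(\Omega)})$.

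To conclude I would bound $\|y_{n-1}^{\delta}-\bar y^{\delta}\|_{H^1}$, $\|\varphi_{n-1}^{\delta}-\bar\varphi^{\delta}\|_{L^2}$ and $\|\psi_{n-1}^{\delta}-\bar\psi^{\delta}\|_{L^2}$ all by $\|e_{n-1}^{\delta}-\bar e^{\delta}\|_{\mathcal V}$ (the last two via $H^{2}(\Omega)\hookrightarrow L^{2}(\Omega)$), then invoke $\omega_{y}\le1$ and, crucially, the prescribed upper bound $\omega_{y}\le\tfrac{\delta C+C}{\delta C+2C}$ --- which is precisely the inequality equivalent to $\omega_{y}\tfrac{C}{\delta}\le(1-\omega_{y})(C+\tfrac{C}{\delta})=k_{1}$ --- to merge the linear and quadratic contributions into $\|y_{n}^{\delta}-\bar y^{\delta}\|_{H^{1}(\Omega)}\le k_{1}\{\|e_{n-1}^{\delta}-\bar e^{\delta}\|_{\mathcal V}^{2}+\|e_{n-1}^{\delta}-\bar e^{\delta}\|_{\mathcal V}\}$, with $k_{1}=(1-\omega_{y})(C+\tfrac{C}{\delta})$ as stated. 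The main obstacle is the second term: getting a clean quadratic $H^{1}$--bound requires the explicit $\tfrac{2}{\delta}$ Lipschitz modulus of $\beta_\delta^{\prime}$ and the embedding into $L^{4}$, after which the delicate point is the constant bookkeeping that forces $\omega_{y}$ into the given window so that everything collapses into the single factor $k_{1}$; the lower bound on $\omega_{y}$ is not used here and only serves later to guarantee $k_{1}<1$.
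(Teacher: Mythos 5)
Your proposal is correct and follows essentially the same route as the paper's (much terser) proof: both treat Step~1 as a damped Newton step, use the sign condition $\beta_\delta'\geq 0$ so that the linearized operator inherits the coercivity of $\sigma(\cdot,\cdot)$, and split the error into a linear contraction part, a quadratic Newton remainder, and an obstacle-perturbation part, which is exactly where the paper's three candidate constants $(1-\omega_y)(C+\tfrac{C}{\delta})$, $\omega_y\tfrac{C}{\delta}(1-\theta)$ and $\omega_y\tfrac{C}{\delta}$ come from. Your identification of the upper bound $\omega_y\leq\tfrac{\delta C+C}{\delta C+2C}$ as the condition forcing the maximum to equal $k_1=(1-\omega_y)(C+\tfrac{C}{\delta})$ matches the paper's bookkeeping, and your write-up actually supplies the details (integral mean value form, $L^4$ embedding) that the paper omits.
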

\begin{proof}

From step 1 of the algorithm 2, and since \newline
 $-\left((\beta _{\delta }^{\prime }\left(
y_{n-1}^{\delta }-\varphi _{n-1}^{\delta }\right)+\beta _{\delta
}^{\prime }\left( \psi _{n-1}^{\delta }-y_{n-1}^{\delta }\right))
\left( y_{n}^{\delta }-\bar{y}^{\delta }\right),\left(
y_{n}^{\delta }-\bar{y}^{\delta }\right) \right)$$\leq0$, by the coercivity and continuity
conditions H\textsubscript{1} and H\textsubscript{2} of
$\sigma\left(\cdot, \cdot\right)$, we obtain

\begin{multline*}
\parallel y_{n}^{\delta }-\bar{y}^{\delta }\parallel _{{H^{1}\left(
\Omega \right) }}  \leq k_{1}\left\{ \parallel y_{n-1}^{\delta
}-\bar{y}^{\delta }
\parallel _{{H^{1}\left( \Omega \right) }}^{2}+\parallel \varphi
_{n-1}^{\delta }-\bar{\varphi}^{\delta }\parallel _{{H^{2}\left(
\Omega \right) }}^{2}+\parallel \psi _{n-1}^{\delta
}-\bar{\psi}^{\delta }\parallel _{{H^{2}\left( \Omega \right)
}}^{2}\right. \\  \left.+\parallel y_{n-1}^{\delta
}-\bar{y}^{\delta }\parallel
_{{H^{1}\left( \Omega \right) }}+\parallel \varphi _{n-1}^{\delta }-\bar{%
\varphi}^{\delta }\parallel _{{H^{2}\left( \Omega \right)
}}+\parallel \psi _{n-1}^{\delta }-\bar{\psi}^{\delta }\parallel
_{{H^{2}\left( \Omega \right) }}\right\},
\end{multline*}
where
\begin{equation*}
k_{1}:=\max \left\{ \left( 1-\omega_{y}\right) \left( C+\dfrac{1}{\delta }%
C\right) ,\omega_{y}\dfrac{1}{\delta }C\left( 1-\theta \right) ,\omega_{y}%
\dfrac{1}{\delta }C\right\}=\left( 1-\omega_{y}\right) \left( C+\dfrac{1}{\delta }%
C\right). \end{equation*}
\end{proof}

\begin{theorem} \label{thr2}
Let $e_{n}^{\delta }:=\left( y_{n}^{\delta },\varphi _{n}^{\delta
},\psi _{n}^{\delta }\right) $, $\bar{e}^{\delta }:=\left(
\bar{y}^{\delta },\bar{\varphi}^{\delta },\bar{\psi}^{\delta
}\right) $ and $\mathcal{V}:={H^{1}\left( \Omega \right) \times
H^{2}\left( \Omega \right)\times H^{2}\left( \Omega \right) }$,
then we get
\begin{equation*}
\parallel e_{n}^{\delta }-\bar{e}^{\delta }\parallel _{\mathcal{V}}\leq k\max \left\{ \parallel e_{n-1}^{\delta
}-\bar{e}^{\delta }\parallel _{\mathcal{V}}^{2},\parallel
e_{n-1}^{\delta }-\bar{e}^{\delta }\parallel
_{\mathcal{V}}\right\}, \label{3.14}
\end{equation*}

where

\begin{equation*}
k:=2\left( k_{1}+\tilde{k}_{3}\right),\text{
}\tilde{k}_{3}:=k_{3}\left( k_{1}+\tilde{l}_{2}+1\right)\text{ and
}\tilde{l}_{2}=l_{2}\left( Ck_{1}+C\right) .
\end{equation*}
\end{theorem}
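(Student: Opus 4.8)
The plan is to combine the one-step error estimates proved above — the Lemma bounding $\parallel y_{n}^{\delta}-\bar{y}^{\delta}\parallel _{H^{1}(\Omega)}$, the Remark following Lemma \ref{lm:l2} bounding $\parallel p_{n}^{\delta}-\bar{p}^{\delta}\parallel _{H^{1}(\Omega)}$, the Lemma bounding $\parallel \lambda_{n}^{\delta}-\bar{\lambda}^{\delta}\parallel _{L^{2}(\Omega)}$, and Lemma \ref{lem:llpsi} together with the analogous Lemma for $\varphi$ (Lipschitz constant $k_{3}$) bounding $\parallel \varphi_{n}^{\delta}-\bar{\varphi}^{\delta}\parallel _{H^{2}(\Omega)}$ and $\parallel \psi_{n}^{\delta}-\bar{\psi}^{\delta}\parallel _{H^{2}(\Omega)}$ — into a single contraction-type inequality for $e_{n}^{\delta}-\bar{e}^{\delta}$. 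Put $E_{n-1}:=\parallel e_{n-1}^{\delta}-\bar{e}^{\delta}\parallel _{\mathcal{V}}$ and $M_{n-1}:=\max\{E_{n-1}^{2},E_{n-1}\}$. Since the three factors of $\mathcal{V}$ are $H^{1}$, $H^{2}$ and $H^{2}$, at the level of the target norm
\[
\parallel e_{n}^{\delta}-\bar{e}^{\delta}\parallel _{\mathcal{V}}\le \parallel y_{n}^{\delta}-\bar{y}^{\delta}\parallel _{H^{1}(\Omega)}+\parallel \varphi_{n}^{\delta}-\bar{\varphi}^{\delta}\parallel _{H^{2}(\Omega)}+\parallel \psi_{n}^{\delta}-\bar{\psi}^{\delta}\parallel _{H^{2}(\Omega)},
\]
so it is enough to bound each summand by a constant multiple of $M_{n-1}$ and add.

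First I would invoke the Lemma on $y_{n}^{\delta}$, which gives $\parallel y_{n}^{\delta}-\bar{y}^{\delta}\parallel _{H^{1}(\Omega)}\le k_{1}(E_{n-1}^{2}+E_{n-1})\le 2k_{1}M_{n-1}$ under the stated window for $\omega_{y}$. Feeding this into the Remark after Lemma \ref{lm:l2}, together with the embedding $H^{1}\hookrightarrow L^{2}$ and the trivial bound $E_{n-1}\le M_{n-1}$, yields $\parallel p_{n}^{\delta}-\bar{p}^{\delta}\parallel _{H^{1}(\Omega)}\le \tilde{l}_{2}M_{n-1}$ with $\tilde{l}_{2}=l_{2}(Ck_{1}+C)$, the generic constant $C$ absorbing the numerical factors. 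Next, substituting the bounds for $y_{n}^{\delta}-\bar{y}^{\delta}$ and $p_{n}^{\delta}-\bar{p}^{\delta}$ and the bound $\parallel \varphi_{n-1}^{\delta}-\bar{\varphi}^{\delta}\parallel _{H^{2}(\Omega)}\le M_{n-1}$ into the Lemma for $\lambda_{n}^{\delta}$ gives $\parallel \lambda_{n}^{\delta}-\bar{\lambda}^{\delta}\parallel \le C(1+k_{1}+\tilde{l}_{2})M_{n-1}$. Inserting all of these into the Lemma with Lipschitz constant $k_{3}$ for $\varphi_{n}^{\delta}$ — whose hypotheses $\tilde{\rho}_{3}\le C\delta\nu$ and admissible range for $\omega_{\varphi}$ make the denominator $\delta\nu C-C\tilde{\rho}_{3}$ positive — produces $\parallel \varphi_{n}^{\delta}-\bar{\varphi}^{\delta}\parallel _{H^{2}(\Omega)}\le \tilde{k}_{3}M_{n-1}$ with $\tilde{k}_{3}=k_{3}(k_{1}+\tilde{l}_{2}+1)$. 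The $\psi$-term is handled identically via Lemma \ref{lem:llpsi}, and since $k_{2}$ has the same form as $k_{3}$ the same computation gives $\parallel \psi_{n}^{\delta}-\bar{\psi}^{\delta}\parallel _{H^{2}(\Omega)}\le \tilde{k}_{3}M_{n-1}$.

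Summing the three contributions in the displayed inequality gives $\parallel e_{n}^{\delta}-\bar{e}^{\delta}\parallel _{\mathcal{V}}\le (2k_{1}+2\tilde{k}_{3})M_{n-1}=2(k_{1}+\tilde{k}_{3})M_{n-1}=kM_{n-1}$, which is the assertion. The only genuinely delicate point is the constant bookkeeping in the middle step: one must carry the quadratic term surviving in the $y$-estimate correctly through each subsequent \emph{linear} estimate — this is where $t^{2}+t\le 2\max\{t^{2},t\}$ and $t\le \max\{t^{2},t\}$ are used — and check that the $p$- and $\lambda$-contributions collapse into the single constant $\tilde{k}_{3}$ of the stated shape; this in turn relies on the smallness and compatibility hypotheses on $\delta$ and on $\omega_{y},\omega_{\varphi},\omega_{\psi}$, which guarantee that all the preceding lemmas apply with well-defined positive constants.
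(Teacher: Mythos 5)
Your proposal is correct and follows essentially the same route as the paper: chain the one-step estimates for $y_{n}^{\delta}$, $p_{n}^{\delta}$, $\lambda_{n}^{\delta}$, $\varphi_{n}^{\delta}$ and $\psi_{n}^{\delta}$, reduce $t^{2}+t$ to $2\max\{t^{2},t\}$, and sum. Your constant bookkeeping in fact matches the theorem's stated $k=2(k_{1}+\tilde{k}_{3})$ more closely than the paper's own proof, which ends with $k=2(k_{1}+\tilde{k}_{2}+\tilde{k}_{3})$ and carries an extra $\tilde{k}_{\lambda}$ inside $\tilde{k}_{3}$; both versions agree up to the generic constants the paper freely absorbs.
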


\begin{proof}
From equations \eqref{3.13} and \eqref{3.8}, we get

\begin{equation*}
\parallel y_{n}^{\delta }-\bar{y}^{\delta }\parallel _{\mathcal{V}}\leq k_{1}\left\{ \parallel e_{n-1}^{\delta }-\bar{e}^{\delta }%
\parallel _{\mathcal{V}}^{2}+\parallel e_{n-1}^{\delta }-\bar{e}^{\delta }\parallel
_{\mathcal{V}}\right\},
\end{equation*}

and
\begin{multline*}
\parallel p_{n}^{\delta }-\bar{p}^{\delta }\parallel _{H^{1}\left( \Omega
\right) }\leq l_{2}( C\parallel y_{n}^{\delta }-\bar{y}^{\delta
}\parallel _{H^{1}\left( \Omega \right)} +C\parallel \varphi
_{n-1}^{\delta }-\bar{\varphi}^{\delta
}\parallel _{H^{2}\left( \Omega \right) }\\
+C\parallel \psi _{n-1}^{\delta }-\bar{\psi}^{\delta }\parallel
_{H^{2}\left( \Omega \right) }),
\end{multline*}
then we obtain
\begin{equation*}
\parallel p_{n}^{\delta }-\bar{p}^{\delta }\parallel _{H^{1}\left( \Omega
\right) }\leq \tilde{l}_{2}\left\{ \parallel e_{n-1}^{\delta }-\bar{e}^{\delta }%
\parallel _{\mathcal{V}}^{2}+\parallel e_{n-1}^{\delta }-\bar{e}^{\delta }\parallel
_{\mathcal{V}}\right\}, \label{3.15}
\end{equation*}

where
\begin{equation*}
\tilde{l}_{2}:=l_{2}\left( Ck_{1}+C\right).
\end{equation*}
And by equation \eqref{3.11}, we get
\begin{multline}
\parallel
\lambda_{n}^{\delta }-\bar{\lambda}^{\delta }\parallel
_{L^{2}\left( \Omega \right) } \leq k_{\lambda}(\parallel  \varphi
_{n-1}^{\delta }-\bar{\varphi}^{\delta }\parallel _{H^{2}\left(
\Omega \right) }+\tilde{l}_{2}\left\{ \parallel e_{n-1}^{\delta }-\bar{e}^{\delta }%
\parallel _{\mathcal{V}}^{2}+\parallel e_{n-1}^{\delta }-\bar{e}^{\delta }\parallel
_{\mathcal{V} }\right\}+\\
 \dfrac{C\tilde{\rho}_{3}}{\delta}k_{1}\left\{ \parallel e_{n-1}^{\delta }-\bar{e}^{\delta }%
\parallel _{\mathcal{V}}^{2}+\parallel e_{n-1}^{\delta }-\bar{e}^{\delta }\parallel
_{\mathcal{V} }\right\})
\end{multline}
then, we obtain
\begin{equation*}
\parallel \varphi _{n}^{\delta }-\bar{\varphi}^{\delta }\parallel
_{H^{2}\left( \Omega \right) }\leq \tilde{k}_{3}\left\{
\parallel e_{n-1}^{\delta }-\bar{e}^{\delta }\parallel _{\mathcal{V}}^{2}+\parallel e_{n-1}^{\delta }-\bar{e}^{\delta }%
\parallel _{\mathcal{V}}\right\}  \label{3.16}
\end{equation*}

where
\begin{equation*}
\tilde{k}_{3}:=k_{3}\left(
k_{1}+\tilde{l}_{2}+1+\tilde{k}_{\lambda}\right),
\end{equation*}

and by equation \eqref{3.11}, we get

\begin{multline*}
\parallel \psi _{n}^{\delta }-\bar{\psi}^{\delta }\parallel
_{H^{2}\left( \Omega \right) }\leq  k_{2}( \parallel y_{n}^{\delta }-%
\bar{y}^{\delta }\parallel _{H^{1}\left( \Omega \right)
}+\parallel \psi _{n-1}^{\delta }-\bar{\psi}^{\delta }\parallel
_{H^{2}\left( \Omega \right) }+\parallel p_{n}^{\delta
}-\bar{p}^{\delta }\parallel _{H^{1}\left( \Omega \right)
} + \\
 +\parallel \lambda_{n}^{\delta }-\bar{\lambda}^{\delta
}\parallel _{H^{1}\left( \Omega \right) }),
\end{multline*}

 then, we obtain

\begin{equation}
\parallel \psi _{n}^{\delta }-\bar{\psi}^{\delta }\parallel
_{H^{2}\left( \Omega \right) }\leq \tilde{k}_{2}\left\{
\parallel e_{n-1}^{\delta }-\bar{e}^{\delta }\parallel _{\mathcal{V}}^{2}+\parallel e_{n-1}^{\delta }-\bar{e}^{\delta }%
\parallel _{\mathcal{V}}\right\}  \label{3..16}
\end{equation}

where
\begin{equation*}
\tilde{k}_{2}:=k_{2}\left(
k_{1}+\tilde{l}_{2}+1+\tilde{k}_{\lambda}\right).
\end{equation*}

From equations \eqref{3.13}, \eqref{3.16} and \eqref{3..16}, we
get

\begin{equation*}
\parallel e_{n}^{\delta }-\bar{e}^{\delta }\parallel _{\mathcal{V}}\leq 2\left(
k_{1}+\tilde{k}_{3}+\tilde{k}_{2}\right) \max \left\{
\parallel e_{n-1}^{\delta }-\bar{e}^{\delta }\parallel _{\mathcal{V}}^{2},\parallel e_{n-1}^{\delta }-\bar{e}^{\delta }%
\parallel _{\mathcal{V}}\right\}.
\end{equation*}
Finally, we get
\begin{equation}
\parallel e_{n}^{\delta }-\bar{e}^{\delta }\parallel _{\mathcal{V}}\leq k\max \left\{ \parallel e_{n-1}^{\delta
}-\bar{e}^{\delta }\parallel _{\mathcal{V}}^{2},\parallel
e_{n-1}^{\delta }-\bar{e}^{\delta }\parallel
_{\mathcal{V}}\right\} \label{3.17}
\end{equation}
where
\begin{equation*}
k:=2\left( k_{1}+\tilde{k}_{3}+\tilde{k}_{2}\right).
\end{equation*}
\end{proof}

\begin{remark}
As seen above, it is very difficult to give a sharp estimate of
the constant $k$ and to prove that this constant is less than $1$
to get the convergence of the latter algorithm. However, we
believe that with suitable choices of $\delta$ and $\omega$, we
can make this constant less than $1$. \end{remark}

\begin{remark}
\label{Rem_1} From Theorem \ref{thr2}, we deduce that
$y^{\delta}_{n}$ converges strongly to $\bar{y}^{\delta}$ in
$\Huz$ and $\varphi^{\delta}_{n}$ converges strongly to
$\bar{\varphi}^{\delta}$ in $\Hd$ and $\psi^{\delta}_{n}$
converges strongly to $\bar{\psi}^{\delta}$ in $\Hd$.
\end{remark}

\begin{corollary}
By the assumptions of Theorem \ref{thr2}, we deduce that
\begin{equation*}
\mid J\left(y_{n}^{\delta },\varphi_{n}^{\delta },\psi_{n}^{\delta
}\right)-J\left(y_{n-1}^{\delta },\varphi_{n-1}^{\delta
},\psi_{n-1}^{\delta }\right)\mid \text{ goes to } 0.
\end{equation*}
\end{corollary}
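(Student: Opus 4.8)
The plan is to combine the strong convergence provided by Theorem~\ref{thr2} (as recorded in Remark~\ref{Rem_1}) with the continuity of the cost functional on bounded subsets of $\mathcal{V}={H^{1}\left( \Omega \right) \times H^{2}\left( \Omega \right)\times H^{2}\left( \Omega \right) }$.

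First I would recall that, under the hypotheses of Theorem~\ref{thr2}, Remark~\ref{Rem_1} yields $y_{n}^{\delta}\to\bar{y}^{\delta}$ in $H^{1}_{0}(\Omega)$, $\varphi_{n}^{\delta}\to\bar{\varphi}^{\delta}$ in $H^{2}(\Omega)$ and $\psi_{n}^{\delta}\to\bar{\psi}^{\delta}$ in $H^{2}(\Omega)$; that is, $e_{n}^{\delta}\to\bar{e}^{\delta}$ in $\mathcal{V}$ with $e_{n}^{\delta}:=(y_{n}^{\delta},\varphi_{n}^{\delta},\psi_{n}^{\delta})$ and $\bar{e}^{\delta}:=(\bar{y}^{\delta},\bar{\varphi}^{\delta},\bar{\psi}^{\delta})$. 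By the triangle inequality this already gives $\parallel e_{n}^{\delta}-e_{n-1}^{\delta}\parallel_{\mathcal{V}}\le\parallel e_{n}^{\delta}-\bar{e}^{\delta}\parallel_{\mathcal{V}}+\parallel \bar{e}^{\delta}-e_{n-1}^{\delta}\parallel_{\mathcal{V}}\longrightarrow 0$.

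Next I would show that $J$ is locally Lipschitz on $\mathcal{V}$. Writing $J(y,\varphi,\psi)=\tfrac12\parallel y-z\parallel_{L^{2}(\Omega)}^{2}+\tfrac{\nu}{2}(\parallel \nabla\varphi\parallel_{L^{2}(\Omega)}^{2}+\parallel \nabla\psi\parallel_{L^{2}(\Omega)}^{2})$, the elementary identity $a^{2}-b^{2}=(a-b)(a+b)$ together with the reverse triangle inequality and the Cauchy--Schwarz inequality bound $\mid J(e)-J(e')\mid$ by a constant times $\parallel e-e'\parallel_{\mathcal{V}}$, the constant depending only on an upper bound for $\parallel e\parallel_{\mathcal{V}}+\parallel e'\parallel_{\mathcal{V}}+\parallel z\parallel_{L^{2}(\Omega)}$. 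Such a bound holds uniformly in $n$: by Corollary~\ref{cor:c1}, Corollary~\ref{cor:c3} and the subsequent corollary one has $\parallel y_{n}^{\delta}\parallel_{H^{1}(\Omega)}\le\tilde{\rho}_{2}$, $\parallel \varphi_{n}^{\delta}\parallel_{H^{2}(\Omega)}\le\tilde{\rho}_{4}$ and $\parallel \psi_{n}^{\delta}\parallel_{H^{2}(\Omega)}\le\tilde{\rho}_{4}$, so every $e_{n}^{\delta}$ lies in a fixed ball of $\mathcal{V}$.

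Finally I would combine the two steps: $\mid J(y_{n}^{\delta},\varphi_{n}^{\delta},\psi_{n}^{\delta})-J(y_{n-1}^{\delta},\varphi_{n-1}^{\delta},\psi_{n-1}^{\delta})\mid\le C\parallel e_{n}^{\delta}-e_{n-1}^{\delta}\parallel_{\mathcal{V}}\longrightarrow 0$; equivalently one may write $\mid J(e_{n}^{\delta})-J(e_{n-1}^{\delta})\mid\le\mid J(e_{n}^{\delta})-J(\bar{e}^{\delta})\mid+\mid J(\bar{e}^{\delta})-J(e_{n-1}^{\delta})\mid$ and let $n\to\infty$, using the continuity of $J$ at $\bar{e}^{\delta}$. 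I do not anticipate a genuine obstacle here; the only delicate point is securing the uniform-in-$n$ bound that turns the local Lipschitz property of $J$ into an estimate with an $n$-independent constant, and this is precisely what the a priori estimates of Section~3 deliver.
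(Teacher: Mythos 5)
Your proposal is correct and takes essentially the same route as the paper: both arguments expand the difference of the quadratic terms in $J$, use the uniform a priori bounds on $y_{n}^{\delta}$, $\varphi_{n}^{\delta}$, $\psi_{n}^{\delta}$ from Corollary \ref{cor:c1} and its companions to keep the resulting constants independent of $n$, and then conclude from the convergence of the iterates furnished by Theorem \ref{thr2} (Remark \ref{Rem_1}). The only cosmetic difference is that you package the elementary identity as $a^{2}-b^{2}=(a-b)(a+b)$, i.e.\ a local Lipschitz bound for $J$ on a fixed ball of $\mathcal{V}$, whereas the paper writes $(a-b)^{2}+2b(a-b)$ and keeps the quadratic remainder explicit; the content is identical.
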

\begin{proof}
From the cost functional defined in $(\mathcal{P}^{\delta})$, we
can write
\begin{multline*}
\mid J\left(y_{n}^{\delta },\varphi_{n}^{\delta },\psi_{n}^{\delta
}\right)-J\left(y_{n-1}^{\delta },\varphi_{n-1}^{\delta
},\psi_{n-1}^{\delta }\right)\mid =
\dfrac{1}{2}\mid\int\nolimits_{\Omega }\left( y_{n}^{\delta }
-z\right) ^{2}d{ x}+\\
\nu\left( \int\nolimits_{\Omega }\left( \nabla \varphi_{n}^{\delta
} \right) ^{2}+\left( \nabla \psi_{n}^{\delta } \right)
^{2}d{x}\right) -\\
\left(\int\nolimits_{\Omega }\left( y_{n-1}^{\delta } -z\right)
^{2}d{ x}+\nu\left( \int\nolimits_{\Omega }\left( \nabla
\varphi_{n-1}^{\delta } \right) ^{2}+\left( \nabla
\psi_{n-1}^{\delta } \right) ^{2}d{x}\right)\right)\mid.
\end{multline*}

From Corollary $\ref{cor:c1}$, we have $\parallel y_{n-1}^{\delta
}\parallel_{L^{2}\left( \Omega\right)}\leq\tilde{\rho}_{2}$,
$\parallel \varphi_{n-1}^{\delta }\parallel_{H^{2}\left(
\Omega\right)}\leq\tilde{\rho}_{1}$ and $
\parallel \psi_{n-1}^{\delta }\parallel_{H^{2}\left(
\Omega\right)}\leq\tilde{\rho}_{1}$, then, we deduce that

\begin{multline*}
\mid  J\left(y_{n}^{\delta },\varphi_{n}^{\delta },
\psi_{n}^{\delta }\right)-J\left(y_{n-1}^{\delta
},\varphi_{n-1}^{\delta }, \psi_{n-1}^{\delta }\right)\mid\leq
\dfrac{1}{2}(
\parallel y_{n}^{\delta } - y_{n-1}^{\delta }\parallel^{2}_{L^{2}\left( \Omega\right)}
+( 2\tilde{\rho}_{2}+C)\parallel y_{n}^{\delta }-y_{n-1}^{\delta
}\parallel_{L^{2}\left(
\Omega\right)}\\
+\nu \left(\parallel \nabla \varphi_{n}^{\delta }- \nabla
\varphi_{n-1}^{\delta }\parallel^{2}_{L^{2}\left(
\Omega\right)}+(2\tilde{\rho}_{1})\parallel \nabla
\varphi_{n}^{\delta }-\nabla \varphi_{n-1}^{\delta
}\parallel_{L^{2}\left(
\Omega\right)} \right)\\
 +\nu \left(\parallel \nabla \psi_{n}^{\delta }- \nabla
\psi_{n-1}^{\delta }\parallel^{2}_{L^{2}\left(
\Omega\right)}+(2\tilde{\rho}_{1})\parallel \nabla
\psi_{n}^{\delta }-\nabla \psi_{n-1}^{\delta
}\parallel_{L^{2}\left( \Omega\right)} \right)).
\end{multline*}

Finally, we deduce that $ \mid J\left(y_{n}^{\delta
},\varphi_{n}^{\delta },\psi_{n}^{\delta
}\right)-J\left(y_{n-1}^{\delta },\varphi_{n-1}^{\delta
},\psi_{n-1}^{\delta }\right)\mid$  strongly converges to 0.
\end{proof}

\section{Numerical implementation and computational aspects}

Numerical experiments are carried out for one and two dimensional
problems.  We will attempt to compute a grid function consisting
of values ${y}^{\delta,h }\mathrel{\mathop:}=\left( y_{0}^{\delta
},y_{1}^{\delta },...,y_{N+1}^{\delta }\right),
\\{\varphi} ^{\delta,h }\mathrel{\mathop:}=\left( \varphi
_{0}^{\delta },\varphi _{1}^{\delta },...,\varphi _{N+1}^{\delta
}\right), $ ${\psi} ^{\delta,h }\mathrel{\mathop:}=\left( \psi
_{0}^{\delta },\psi _{1}^{\delta },...,\psi _{N+1}^{\delta
}\right) $ and $ {p}^{\delta ,h }\mathrel{\mathop:}=\left(
p_{0}^{\delta },p_{1}^{\delta },...,p_{N+1}^{\delta }\right), $
where ${y}^{\delta,h },{\varphi} ^{\delta,h }, {\psi} ^{\delta,h
}$ and $ {p}^{\delta ,h}$ are the vectors values of the discrete
solutions of the optimality system $(\mathcal{S}^{\delta})$ such
that $y_{i}^{\delta }\mathrel{\mathop:}=y^{\delta }\left(
x_{i}\right) ,\varphi _{i}^{\delta }\mathrel{\mathop:}=\varphi
^{\delta }\left( x_{i}\right) ,\psi _{i}^{\delta
}\mathrel{\mathop:}=\psi ^{\delta }\left( x_{i}\right) $ and $
p_{i}^{\delta }\mathrel{\mathop:}=p^{\delta }\left( x_{i}\right)$
for $ 0\leq i\leq N+1$, finite-differences approximations
involving the three,
 respectively five, point approximation of the Laplacian in one
 dimensional space, respectively two dimensional space. Here
$x_{i}=ih$ for $ 0\leq i\leq N+1$ and
$h\mathrel{\mathop:}=\dfrac{1}{N+1}$ is the distance between two
successive grid points. From the boundary conditions
$y_{0}^{\delta }=y_{N+1}^{\delta }=0,p_{0}^{\delta
}=p_{N+1}^{\delta }=0,$ $\varphi _{0}^{\delta }=\varphi
_{N+1}^{\delta }=0,$ and $\psi _{0}^{\delta }=\psi _{N+1}^{\delta
}=0,$ so we have $4N$ unknown values to compute in one dimensional
space. Then, for example, if we replace $y^{\left( 2\right)
}\left( x\right) $ (respectively $\Delta y\left( x\right) )$ by
the centered difference approximation, we get
\begin{equation}\label{eqI1}
-y^{\left( 2\right) }\left( x\right):=\tfrac{1}{h^2}(
-y_{i+1}+2y_{i}-y_{i-1}), \text{ where } 0\leq i\leq N+1,
\end{equation}
 and
respectively
\begin{equation}\label{eqI2}
-\left( \Delta y\right) _{ij}:=\tfrac{1}{h^{2}}\left(
-y_{i+1,j}+4y_{i,j}-y_{i-1,j}-y_{i,j+1}-y_{i,j-1}\right) , \text{
where } 0\leq i,j\leq N+1.
\end{equation}
Then, we can write the previous systems under the matrix form, as
\begin{equation*}
\left\{
\begin{array}{l}
A_{h}^{d}y _{n}^{\delta,h }+\beta_{\delta }({y_{n}^{\delta,h
}-\varphi _{n-1}^{\delta,h }})-\beta_{\delta }({\psi
_{n-1}^{\delta,h }-y_{n}^{\delta,h
}})=f^{h},\\
\\
 (A_{h}^{d}+\beta_{\delta
}^{\prime }({y_{n}^{\delta,h }-\varphi _{n-1}^{\delta,h
}})+\beta_{\delta }^{\prime }(\psi
_{n-1}^{\delta,h }-{y_{n}^{\delta,h }}))p_{n}^{\delta,h }=y_{n}^{\delta,h }-z^{h},\\
\\
\lambda_{n}^{\delta,h }=\nu A_{h}^{d}\varphi _{n-1}^{\delta
,h}+\beta_{\delta} ^{\prime }\left( y_{n}^{\delta,h }-\varphi
_{n-1}^{\delta,h }\right)
p_{n}^{\delta,h },\\
\\
\nu A_{h}^{d}\psi _{n}^{\delta ,h}+\beta_{\delta} ^{\prime
}\left(\psi _{n}^{\delta,h } -y_{n}^{\delta,h }\right)
p_{n}^{\delta,h }=-\lambda_{n}^{\delta,h },\\
\\
\nu A_{h}^{d}\varphi _{n}^{\delta ,h}+\beta_{\delta} ^{\prime
}\left(y_{n}^{\delta,h }-\varphi _{n}^{\delta,h } \right)
p_{n}^{\delta,h }-\lambda_{n}^{\delta,h }=0,\\
\end{array}%
\right.
\end{equation*}

where $d=1,2$, $ {f^{h}}:=(f_{0},f_{1},...,f_{N+1} )$,
${z^{h}}:=(z_{0},z_{1},...,z_{N+1} )$, and
 such that for one dimensional problem, $A_{h}^{1}$ is
$(N+2)\times(N+2)$ symmetric positive definite matrix, where
$A_{h}^{1}$ is given in \eqref{eqI1} and for two dimensional
problem $A_{h}^{2}$ is $(N+2)^{2}\times(N+2)^{2}$ symmetric
matrix, where $A_{h}^{2}$ is given in \eqref{eqI2}.
 Below, we give the discrete algorithm of the continuous algorithm
 as

\begin{algorithm}[h]
\label{Alg2}%
\caption{Newton dumped-Gauss-Seidel algorithm (Discrete version)}
\begin{algorithmic}[1]

\STATE
 \textbf{Input :}$\left\{ y_{0}^{\delta,h }, p_{0}^{\delta,h }, \varphi _{0}^{\delta,h }, \lambda _{0}^{\delta,h },\psi _{0}^{\delta,h },\delta ,\nu
,\omega_{y},\omega_{\varphi},\omega_{\psi},\varepsilon\right\} $
choose $\varphi _{0}^{\delta,h },\psi _{0}^{\delta,h }\in
\mathcal{W},\varepsilon $ and $\delta $ in $ \mathbb{R}_{+}^{\ast
}$;

\STATE\textbf{Begin:}\\

\STATE \textbf{Calculate } $J_{n-1} \leftarrow
J_{n-1}\left(y^{\delta,h}_{n-1}, \varphi^{\delta,h}_{n-1},
\psi^{\delta,h}_{n-1}\right)$

\STATE \textbf{If} $\left( A_{h }^{d }+\textbf{diag}(\beta
_{\delta }^{\prime }\left( y_{n-1}^{\delta,h }-\varphi
_{n-1}^{\delta,h }\right))+\textbf{diag}(\beta _{\delta }^{\prime
}\left(\psi _{n-1}^{\delta,h }- y_{n-1}^{\delta,h }\right))
\right) $ is singular \textbf{Stop.}

\STATE  \textbf{ Else} \STATE \textbf{ Solve} $\left( A_{h }^{d
}+\textbf{diag}(\beta _{\delta }^{\prime }\left( y_{n-1}^{\delta
,h}-\varphi _{n-1}^{\delta,h }\right))+\textbf{diag}(\beta
_{\delta }^{\prime }\left(\psi _{n-1}^{\delta,h }-
y_{n-1}^{\delta,h }\right) \right)) .r_{n}^{\delta } = -
\omega_{y}\left( A_{h }^{d } y_{n-1}^{\delta
,h}+\textbf{diag}(\beta _{\delta }\left( y_{n-1}^{\delta,h
}-\varphi _{n-1}^{\delta,h } \right))-\textbf{diag}(\beta _{\delta
}\left( \psi _{n-1}^{\delta,h } -y_{n-1}^{\delta,h }\right)) -f
\right)$ on $r_{n}^{\delta },$

\STATE \ \ \ \ \ \ \ \ \ \ \ \textbf{Then } $y_{n}^{\delta,h
}=y_{n-1}^{\delta,h }+$ $r_{n}^{\delta }.$ \\

 \STATE \textbf{End if}

\STATE \textbf{If} $\left( A_{h }^{d }+\textbf{diag}(\beta
_{\delta }^{\prime }\left( y_{n}^{\delta,h }-\varphi
_{n-1}^{\delta,h }\right)) +\textbf{diag}(\beta _{\delta }^{\prime
}\left( \psi _{n-1}^{\delta,h }
-y_{n}^{\delta,h }\right))\right)$ is singular \textbf{Stop.} \\

\STATE \ \ \ \ \ \ \ \ \  \textbf{\ Else} \\

\STATE \textbf{\ Solve } $\left( A_{h }^{d } +\textbf{diag}(\beta
_{\delta }^{\prime }\left( y_{n}^{\delta,h }-\varphi
_{n-1}^{\delta,h }\right))+\textbf{diag}(\beta _{\delta }^{\prime
}\left( \psi _{n-1}^{\delta ,h} -y_{n}^{\delta,h }\right)) \right)
p_{n}^{\delta,h }=y_{n}^{\delta,h }-z$
on $p_{n}^{\delta,h }.$\\

 \STATE \textbf{End if}
\STATE \textbf{Calculate} $\lambda_{n}^{\delta,h } = \nu A_{h }^{d
} \varphi _{n-1}^{\delta,h }+\textbf{diag}(\beta _{\delta
}^{\prime }\left( y_{n}^{\delta,h }-\varphi _{n-1}^{\delta,h
}\right)) p_{n}^{\delta,h } $

\STATE \textbf{If} $\left( \nu A_{h }^{d }+\textbf{diag}(\beta
_{\delta }^{\prime \prime }\left( \psi _{n-1}^{\delta,h
}- y_{n}^{\delta ,h}\right)) p_{n}^{\delta,h }\right) $ is not invertible \textbf{Stop.}\\

\STATE \ \ \ \ \ \ \ \ \  \textbf{Else} \\
\STATE \textbf{Solve } $\left(  \nu A_{h }^{d
}+\left(\textbf{diag}\left(\beta _{\delta }^{\prime \prime
}\left(\psi _{n-1}^{\delta,h } -y_{n}^{\delta ,h}\right)
\right)\right) p_{n}^{\delta,h }\right).r_{n}^{\delta }=$ \\
$-\omega_{\psi}\left(\nu A_{h}^{d}\psi _{n-1}^{\delta,h
}+\textbf{diag}(\beta _{\delta }^{\prime }\left( \psi
_{n-1}^{\delta,h }-y_{n}^{\delta,h }\right)) p_{n}^{\delta,h
}+\lambda_{n}^{\delta,h }\right)$ on $r_{n}^{\delta }.$

\STATE \ \ \ \ \ \ \ \ \ \ \ \textbf{Then }$ \psi _{n}^{\delta,h
}=\psi _{n-1}^{\delta,h }+$ $r_{n}^{\delta }$

\STATE \textbf{If} $\left( \nu A_{h }^{d }-\textbf{diag}(\beta
_{\delta }^{\prime \prime }\left(y_{n}^{\delta,h }- \varphi
_{n-1}^{\delta,h
}\right)) p_{n}^{\delta,h }\right) $ is not invertible \textbf{Stop.}\\

\STATE \ \ \ \ \ \ \ \ \  \textbf{Else} \\

\STATE \textbf{Solve }$\left(  \nu A_{h }^{d }-\textbf{diag}(\beta
_{\delta }^{\prime \prime }\left( y_{n}^{\delta,h }-\varphi
_{n-1}^{\delta,h } \right)) p_{n}^{\delta,h }\right).$
$r_{n}^{\delta }$=\\$-\omega_{\varphi}\left(\nu A_{h}^{d}\varphi
_{n-1}^{\delta,h }+\textbf{diag}(\beta _{\delta }^{\prime }\left(
y_{n}^{\delta,h }-\varphi _{n-1}^{\delta,h }\right))
p_{n}^{\delta,h }-\lambda_{n}^{\delta,h }\right)$ on
$r_{n}^{\delta }.$

\STATE \ \ \ \ \ \ \ \ \ \ \ \textbf{Then }$ \varphi
_{n}^{\delta,h }=\varphi _{n-1}^{\delta ,h}+$ $r_{n}^{\delta }$
\STATE \textbf{Calculate } $J_{n} \leftarrow
J_{n-1}\left(y^{\delta,h}_{n},
\varphi^{\delta,h}_{n},\psi^{\delta,h}_{n}\right)$

 \STATE \textbf{End if}

 \STATE \textbf{If } $|J_{n}-J_{n-1}| \leq  \varepsilon $ \textbf{Stop.}

\STATE \textbf{Ensure :} $s_{n}^{\delta }:=\left( y_{n}^{\delta
},\varphi_{n}^{\delta },\psi_{n}^{\delta },p_{n}^{\delta }\right)
$ \textbf{is a solution}

\STATE \ \ \ \ \ \ \ \ \  \textbf{Else; } $n\leftarrow n+1$,
\textbf{Go to} \textbf{Begin.}

 \STATE \textbf{End if}

\STATE \textbf{End algorithm}.
\end{algorithmic}
\end{algorithm}
\FloatBarrier

\begin{remark}
\label{Rem_2} Theorem \ref{thr2} is given for the continuous
problem and it is clear that for the discrete form of the proposed
algorithm, we must introduce the discretisation parameter $h$. But
for this discrete form of the algorithm $2$, it is very difficult
to give a sharp estimate of the Lipschitz constant $k$ given by
Theorem \ref{thr2}.
\end{remark}

\subsection{Numerical examples in one dimensional space}

In this section, we take $\Omega=[0,1]$ and we describe some
numerical experiments in one dimensional space based on the
previous algorithm. We also give some numerical tests when in each
test we vary one of the parameters $\omega$, $\delta$, $N$ and
$\nu$, where $f(x)= 100xcos(3 \pi x)$, $z(x)= cos(4 \pi x^2)$ and
$\nu>0$ are given. In the sequel, we note by $\epsilon_{n}$ the
quantity $\max \left\{
\parallel y_{n}-y_{n-1}\parallel _{\infty},\parallel \varphi
_{n}-\varphi _{n-1}\parallel _{\infty}\right\}$.

\subsubsection{Test 1: Study of the  dependence on the parameter
$\omega$ with $\protect\delta =h^{2}$, $\protect\nu =1$ and
$N=200$}
Numerical results are displayed in Table 1 according to the
variation of $\omega$. In Figure 1, we give the curves
corresponding to the controls $\varphi$ and $\psi$. Curves given
in Figure 2 show the contact region $I(y)$ between the state and
the control functions. Finally, Figure 3 gives graphical
variations in a log-log scale of $\epsilon_n$ and $J_{n}$ for each
iteration $n$.

\subsubsection{Test 2: Study of the  dependence on the parameter
$N$ with $\protect\delta =h^2$, $\protect\omega =0,75$ and
$\protect\nu =1$}
Numerical results are displayed in Table 2 according to the
variation of $N$. In Figure 4, curves corresponding to the
controls $\varphi$ and $\psi$ are shown. Curves given inFigure 5
show the contact region $I(y)$ between the state and the control
functions. Finally, Figure 6 gives graphical variations in a
log-log scale of $\epsilon_n$ and $J_{n}$ for each iteration $n$.

\subsubsection{Test 3: Study of the  dependence on the parameter $\protect%
\nu $ with $\protect\delta =h^2$, $\protect\omega =0,75$ and
$N=200$}

Numerical results are displayed in Table 3 according to the
variation of $\nu$. In Figure 7, curves corresponding to the
controls $\varphi$ and $\psi$ are shown. Curves given in Figure 8
show the contact region $I(y)$ between the state and the control
functions. Finally, Figure 9 gives graphical variations in a
log-log scale of $\epsilon_n$ and $J_{n}$ for each iteration $n$.

\subsubsection{Test 4: Study of the  dependence on the parameter $\protect%
\delta $ with $N=200$, $\protect\omega =0,75$ and $\protect\nu
=0.1$}
Numerical results are displayed in Table 4 according to the
variation of $\delta$. In Figure 10, curves corresponding to the
controls $\varphi$ and $\psi$ are shown. Curves given in Figure 11
show the contact region $I(y)$ between the state and the control
functions. Finally, Figure 12 gives graphical variations in a
log-log scale of $\epsilon_n$ and $J_{n}$ for each iteration $n$.

\subsection{Numerical examples in two dimensional space}
In this section, we describe some numerical experiments in two
dimensional space based on the previous algorithm. We also give
some numerical tests when in each test we vary one of the
parameters $\omega$, $\delta$, $N$ and $\nu$, where
$\Omega=[0,1]\times[0,1]$, $f(x,y)= x^3sin(2 \pi x^2)ycos(2 \pi
y^2)$ and $z(x,y)=sin(2 \pi x^2)cos(2\pi y^2)$) and
$\omega_{y}=\omega_{\varphi}=\omega_{\psi}=\omega$.
\subsubsection{Test 1: Study of the  dependence on the parameter
$\omega$ with $\protect\delta =h^4$, $\protect\nu =1$ and $N=40$}
Numerical results are displayed in Table 5 according to the
variation of $\omega$. Figure 13 gives graphical variations in a
log-log scale of $\epsilon_n$ and $J_{n}$ for each iteration $n$.
Curves given in Figure 14 and  Figure 15 corresponding to the
controls and state functions are shown.

\subsubsection{Test 2: Study of the  dependence on the parameter
$N$ with $\protect\delta =h^4$, $\protect\omega =0,5$ and
$\protect\nu =1$}

Numerical results are displayed in Table 6 according to the
variation of $N$. Figure 16 gives graphical variations in a
log-log scale of $\epsilon_n$ and $J_{n}$ for each iteration $n$.
Curves given in Figure 17 Figure 18 corresponding to the controls
and state functions are shown.

\subsubsection{Test 3: Study of the  dependence on the parameter $\protect%
\nu $ with $\protect\delta =h^4$, $\protect\omega =0,5$ and
$N=40$}

Numerical results are displayed in Table 7 according to the
variation of $\nu$. Figure 19 gives graphical variations in a
log-log scale of $\epsilon_n$ and $J_{n}$ for each iteration $n$.
Curves given in Figure 20 and Figure 21 corresponding to the
controls and state functions are shown.

\subsubsection{Test 4: Study of the  dependence on the parameter $\protect%
\delta $ with $N=40$, $\protect\omega =0,5$ and $\protect\nu =1$}
Numerical results are displayed in Table 8 according to the
variation of $\delta$. Figure 22 gives graphical variations in a
log-log scale of $\epsilon_n$ and $J_{n}$ for each iteration $n$.
Curves given in Figure 23 and Figure 24 corresponding to the
controls and state functions are shown.

\section{Conclusion and remarks}

We notice that techniques used in the paper of Ghanem et al.
\cite{Ghanemzireg} can be easily applied to the numerical resolution of the
problem considered in this work. The given numerical results are
acceptable although the convergence of the algorithm is not fast.
They also consolidate our perception given in Remarks \ref{Rem_1}
and \ref{Rem_2} about the Lipschitz constants. We can either apply other algorithms of
resolution (for example semismooth Newton methods ) \cite{KunischWachsmuth} or should improve the used algorithm by optimizing the
choice of the parameter (by the line search method, for example).

\end{document}